\newtheorem{theorem}{Theorem}
\newtheorem{corollary}[theorem]{Corollary}
\newtheorem{definition}[theorem]{Definition}
\newtheorem{lemma}[theorem]{Lemma}
\newtheorem{proposition}[theorem]{Proposition}
\newtheorem{remark}[theorem]{Remark}
\numberwithin{equation}{section}
\begin{document}

\hyphenpenalty=10000

\begin{center}
{\Large \textbf{SPECTRUM OF A FAMILY OF OPERATORS }}\\[5mm]
{\large {Simona Macovei }\\[10mm]
}
\end{center}

{\footnotesize \textbf{Abstract}. Having as start point the classic
definitions of resolvent set and spectrum of a linear bounded operator on a
Banach space, we introduce the resolvent set and spectrum of a family of
linear bounded operators on a Banach space. In addition, we present some
results which adapt to asymptotic case the classic results.} \footnote{%
\textsf{2010 Mathematics Subject Classification:} 47-01;47A10} \footnote{%
\textsf{Keywords:} spectrum; resolvent set; asymptotic equivalence;
asymptotic qausinilpotent equivalence}

\fancyhead{} \fancyfoot{} 
\fancyhead[LE, RO]{\bf\thepage}
\fancyhead[LO]{\small Spectrum of a Family of Operators}
\fancyhead[RE]{\small Simona Macovei  }

\section{Introduction}

\noindent

\noindent Let \textit{X} be a complex Banach space and $L(X)$ the Banach
algebra of linear bounded operators on \textit{X}. Let \textit{T} be a
linear bounded operator on \textit{X}. The\textit{\ norm} of \textit{T} is

\noindent 
\begin{equation*}
\left\|T\right\|=sup\left\{\left\|Tx\right\||\ x\in X,\ \left\|x\right\|\le
1\right\}.
\end{equation*}

\noindent The\textit{\ spectrum} of an operator $T\in L(X)$ is defined as
the set

\noindent 
\begin{equation*}
Sp\left(T\right)\mathrm{=}{\mathbb{C}}\backslash r(T),
\end{equation*}

\noindent where $r(T)$ is the \textit{resolvent set} of \textit{T} and
consists in all complex numbers $\lambda \in {\mathbb{C}}$ for which the
operator $\lambda I-T$ is bijective on \textit{X}.

\noindent It is an important fact that the \textit{resolvent} \textit{%
function }$\lambda \mapsto {(\lambda I-T)}^{-1}$ is an analytic function
from $r(T)$ to $L(X)$ and for $\lambda \in r(T)$ we have

\noindent 
\begin{equation*}
d\left(\lambda ,r\left(T\right)\right)\ge \frac{1}{\left\|{(\lambda I-T)}%
^{-1}\right\|}.
\end{equation*}

\noindent Moreover, for $\lambda \in r(T)$, the\textit{\ resolvent operator }%
$R\left(\lambda ,T\right)\in L(X)$ is defined by the relation $%
R\left(\lambda ,T\right)={(\lambda I-T)}^{-1}$ and satisfied the\textit{\
resolvent equation }

\noindent 
\begin{equation*}
R\left(\lambda ,T\right)-R\left(\mu ,T\right)=(\mu -\lambda )R\left(\lambda
,T\right)R\left(\mu ,T\right),
\end{equation*}

\noindent for all $\lambda ,\mu \in r(T)$. Therefore, in particular, $%
R\left(\lambda ,T\right)$ and $R\left(\mu ,T\right)$ commute.

\noindent We say that an infinite series of operators $\sum{T_n}$ is
absolutely convergent if the series $\sum{\left\|T_n\right\|}$ is convergent
in $L(X)$ and $\left\|\sum{T_n}\right\|\le \sum{\left\|T_n\right\|}$.

\noindent If $\left\|T\right\|<1$, then 
\begin{equation*}
{(\lambda I-T)}^{-1}=\sum{T^n}
\end{equation*}

\noindent and it is absolutely convergent. A consequence of this is the fact
that $r(T)$ is an open set of ${\mathbb{C}}$.

\noindent

\begin{theorem}
\noindent \label{t1.1}Theorem 1.1. Let $T\in L(X)$ be a linear bounded
operator on X. Then $Sp\left( T\right) $ is a non-empty compact subset of $C$%
.

The spectral radius of an operator $T\in L(X)$ is the positive number equal
with $\mathop{sup}_{\lambda \in Sp(T)}\left\vert \lambda \right\vert \ $.
\end{theorem}

\noindent 

\begin{theorem}
\noindent \label{t1.2}Let $T\in L(X)$. Then 
\begin{equation*}
{\mathop{sup}_{\lambda \in Sp(T)}\left\vert \lambda \right\vert \ }={{%
\mathop{\lim }_{n\rightarrow \infty }}{\left\Vert T^{n}\right\Vert }^{\frac{1%
}{n}}\ }.
\end{equation*}

\noindent Let $\Omega $ be an open neighborhood of $Sp\left( T\right) $ and
let $H(\Omega )$ denote the space of all complex valued analytic functions
defined on $\Omega $. The application $f\mapsto f(T):H(\Omega )\rightarrow
L(X)$ defined by the relation 
\begin{equation*}
f\left( T\right) =\frac{1}{2\pi i}\int_{\gamma }{f\left( \lambda \right)
R\left( \lambda ,T\right) d\lambda },
\end{equation*}

\noindent where $\gamma $ is a contour which envelopes $Sp(T)$ in $\Omega $,
is called the holomorphic functional calculi of $T$.
\end{theorem}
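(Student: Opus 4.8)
The content requiring an argument is the spectral radius identity $\sup_{\lambda\in Sp(T)}|\lambda|=\lim_{n\to\infty}\|T^{n}\|^{1/n}$; the subsequent paragraph on the holomorphic functional calculus is a definition and needs no proof. Write $\rho:=\sup_{\lambda\in Sp(T)}|\lambda|$ and, provisionally, $L:=\lim_{n\to\infty}\|T^{n}\|^{1/n}$. The first task is to show that $L$ exists at all. Since the norm on $L(X)$ is submultiplicative, $\|T^{m+n}\|\le\|T^{m}\|\,\|T^{n}\|$, so $a_{n}:=\log\|T^{n}\|$ is subadditive, and Fekete's subadditivity lemma gives that $\lim_{n}a_{n}/n$ exists and equals $\inf_{n}a_{n}/n$; exponentiating yields $\lim_{n}\|T^{n}\|^{1/n}=\inf_{n}\|T^{n}\|^{1/n}=L$. (If $T^{k}=0$ for some $k$, both sides vanish and there is nothing to prove, so I may assume $\|T^{n}\|>0$ throughout.)

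Next I would establish the inequality $\rho\le L$ by the Neumann series, the operator analogue of the geometric series recalled for $\|T\|<1$ in the Introduction. For $|\lambda|>L$ the series $\sum_{n\ge 0}\lambda^{-n-1}T^{n}$ converges absolutely in $L(X)$, since $\limsup_{n}\|\lambda^{-n-1}T^{n}\|^{1/n}=|\lambda|^{-1}L<1$ by the root test. A telescoping computation shows its sum is a two-sided inverse of $\lambda I-T$, so $\lambda\in r(T)$. Hence $Sp(T)\subseteq\{|\lambda|\le L\}$, that is $\rho\le L$.

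The reverse inequality $L\le\rho$ is the substance of the theorem and the main obstacle. The resolvent $R(\cdot,T)$ is analytic on $r(T)\supseteq\{|\lambda|>\rho\}$, and for $|\lambda|>\|T\|$ it has the Laurent expansion $R(\lambda,T)=\sum_{n\ge 0}\lambda^{-n-1}T^{n}$. Fix any $\phi\in L(X)^{*}$. Then $g(\lambda):=\phi(R(\lambda,T))$ is a scalar function analytic on the annulus $\{|\lambda|>\rho\}$ whose Laurent coefficients, computed on $|\lambda|>\|T\|$ and hence—by uniqueness of Laurent coefficients—on the whole annulus, are $\phi(T^{n})$. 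Convergence of a Laurent series on an annulus forces its terms to be bounded there, so for each fixed $\lambda$ with $|\lambda|>\rho$ we have $\sup_{n}|\phi(\lambda^{-n-1}T^{n})|<\infty$. As this holds for every $\phi$, the family $\{\lambda^{-n-1}T^{n}\}_{n}$, viewed through the isometric embedding $L(X)\hookrightarrow L(X)^{**}$, is pointwise bounded on the Banach space $L(X)^{*}$. The uniform boundedness principle then supplies a constant $M=M(\lambda)$ with $\|\lambda^{-n-1}T^{n}\|\le M$ for all $n$, whence $\|T^{n}\|^{1/n}\le M^{1/n}|\lambda|^{(n+1)/n}\to|\lambda|$. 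Thus $L\le|\lambda|$ for every $|\lambda|>\rho$, and letting $|\lambda|\downarrow\rho$ gives $L\le\rho$.

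Combining the two inequalities yields $\rho=L$. The step I expect to demand the most care is the hard direction: upgrading the coefficientwise scalar boundedness produced by complex analysis to a uniform operator-norm bound, which is precisely where the uniform boundedness principle and the isometric nature of the canonical embedding into the bidual are indispensable. Non-emptiness of $Sp(T)$ from Theorem \ref{t1.1} ensures the supremum defining $\rho$ is taken over a non-empty set, so the identity is not vacuous.
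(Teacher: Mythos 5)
Your proof is correct, but note that the paper itself offers no proof of Theorem \ref{t1.2}: it is quoted in the Introduction as classical background (Gelfand's spectral radius formula), alongside Theorems \ref{t1.1}, \ref{t1.3} and \ref{t1.4}, with the burden carried by the cited references (Dunford--Schwartz, Colojoar\u{a}). What you have reconstructed is precisely the standard textbook argument: existence of the limit via submultiplicativity and Fekete's lemma, the inclusion $Sp(T)\subseteq\{|\lambda|\le L\}$ via the Neumann series, and the hard inequality $L\le\rho$ via weak analyticity of the resolvent on $\{|\lambda|>\rho\}$, uniqueness of Laurent coefficients, and the uniform boundedness principle applied through the isometric embedding of $L(X)$ into its bidual. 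All steps, including the delicate upgrade from scalar coefficient bounds to a uniform operator-norm bound, are handled correctly, so your proposal supplies a complete proof of exactly the statement the paper takes for granted.
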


\noindent 

\begin{theorem}
\label{t1.3}\noindent Let $T\in L(X)$ and suppose that $\Omega $ is an open
neighborhood of $Sp\left( T\right) $. Then, for all $f\in H(\Omega )$, we
have 
\begin{equation*}
f\left( Sp(T)\right) =Sp(f\left( T\right) ).
\end{equation*}
\end{theorem}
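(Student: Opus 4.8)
The plan is to prove the two set inclusions $f(Sp(T))\subseteq Sp(f(T))$ and $Sp(f(T))\subseteq f(Sp(T))$ separately, with the entire argument resting on one structural property of the functional calculus introduced in Theorem~\ref{t1.2}: that the map $f\mapsto f(T)$ is a unital algebra homomorphism. Concretely, I would first record that this map is linear, sends the constant function $1$ to $I$ and the identity function $\lambda\mapsto\lambda$ to $T$, and satisfies $(fg)(T)=f(T)g(T)$ for all $f,g\in H(\Omega)$. Once this is in hand, both inclusions follow from elementary algebraic factorizations, so I regard the multiplicativity as the real content to be secured first.

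To establish multiplicativity I would choose two contours $\gamma_1,\gamma_2$ in $\Omega$, each enveloping $Sp(T)$, with $\gamma_2$ enclosed by $\gamma_1$, and write the product $f(T)g(T)$ as the iterated contour integral of $f(\lambda)g(\mu)R(\lambda,T)R(\mu,T)$ over $\gamma_1\times\gamma_2$. The decisive step is to substitute the resolvent equation, which expresses $R(\lambda,T)R(\mu,T)$ as $\frac{1}{\mu-\lambda}\bigl(R(\lambda,T)-R(\mu,T)\bigr)$, and then to evaluate the two resulting iterated integrals by the Cauchy integral formula and the Cauchy theorem. The nesting of the contours ensures that exactly one of the two inner integrals picks up a residue, collapsing the double integral to $\tfrac{1}{2\pi i}\int_{\gamma_1}f(\lambda)g(\lambda)R(\lambda,T)\,d\lambda=(fg)(T)$.

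For the inclusion $f(Sp(T))\subseteq Sp(f(T))$, fix $\mu\in Sp(T)$ and consider $g(\lambda)=\frac{f(\mu)-f(\lambda)}{\mu-\lambda}$; the apparent singularity at $\lambda=\mu$ is removable, so $g\in H(\Omega)$ and $f(\mu)-f(\lambda)=(\mu-\lambda)g(\lambda)$ on $\Omega$. Applying the homomorphism gives $f(\mu)I-f(T)=(\mu I-T)g(T)$, where the two factors commute. If $f(\mu)$ were in the resolvent set of $f(T)$, then $f(\mu)I-f(T)$ would be invertible; but a product of two commuting operators is invertible only when each factor is, forcing $\mu I-T$ to be invertible and contradicting $\mu\in Sp(T)$. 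Hence $f(\mu)\in Sp(f(T))$. For the reverse inclusion I argue by contraposition: if $\nu\notin f(Sp(T))$, then $\nu-f(\lambda)\neq 0$ on $Sp(T)$, hence on some open $\Omega'\subseteq\Omega$ containing $Sp(T)$, where $h(\lambda)=\frac{1}{\nu-f(\lambda)}$ is analytic. From $h(\lambda)(\nu-f(\lambda))=1$ the homomorphism yields $h(T)(\nu I-f(T))=(\nu I-f(T))h(T)=I$, so $\nu I-f(T)$ is invertible and $\nu\notin Sp(f(T))$.

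The main obstacle is the multiplicativity of the functional calculus: the contour manipulation, the justification of interchanging the two integrations, and the contour-nesting residue evaluation are the only places where genuine analytic work is required. The two spectral inclusions themselves are short formal consequences of the factorizations $f(\mu)-f(\lambda)=(\mu-\lambda)g(\lambda)$ and $(\nu-f(\lambda))h(\lambda)=1$ together with the commuting-product invertibility remark, so once the homomorphism property is proved the theorem follows quickly.
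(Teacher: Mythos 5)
The paper never proves Theorem~\ref{t1.3}: it is recalled in the introduction as classical background on the holomorphic functional calculus, with the references carrying the proof, so there is no in-paper argument to compare yours against line by line. Your proposal is correct and is the standard textbook proof: multiplicativity of $f\mapsto f(T)$ via two nested contours, the resolvent equation and the Cauchy formula; then the factorization $f(\mu)I-f(T)=(\mu I-T)g(T)$ with $g(\lambda)=\frac{f(\mu)-f(\lambda)}{\mu-\lambda}$ (removable singularity) together with the fact that a product of commuting operators is invertible only if each factor is, which gives $f(Sp(T))\subseteq Sp(f(T))$; and the reciprocal function $h=1/(\nu-f)$ on a smaller neighborhood $\Omega'$ of $Sp(T)$, which gives the reverse inclusion. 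It is worth noting that this is exactly the technique the paper itself deploys when it proves the asymptotic analogue, Theorem~\ref{d3.15}: there the author introduces the same quotient function $g\left(\xi\right)=\frac{f\left(\xi\right)-f(\lambda)}{\xi-\lambda}$ with the singularity removed, uses the factorization $f(T_h)-f(\lambda)I=g(T_h)(T_h-\lambda I)$ for one inclusion, and the same function $h\left(\xi\right)=\frac{1}{f\left(\xi\right)-\lambda}$ for the other, so your blind reconstruction matches the method the paper evidently has in mind. The one ingredient you rightly isolate as the real analytic content --- multiplicativity of the calculus --- is precisely what the paper leaves implicit (``according to the properties of holomorphic functional calculi''), and your contour-nesting argument for it is sound; the only cosmetic slip is that after the residue evaluation the double integral collapses to the integral over the inner contour $\gamma_2$ rather than $\gamma_1$, which is immaterial by contour independence of the calculus.
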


\noindent We also remember that two operators $T,S\in L(X)$ are \textit{%
quasinilpotent equivalent} if

\begin{equation*}
{\mathop{\lim }_{n\rightarrow \infty }{\left\Vert {(T-S)}^{\left[ n\right]
}\right\Vert }^{\frac{1}{n}}\ }={\mathop{\lim }_{n\rightarrow \infty }{%
\left\Vert {(S-T)}^{\left[ n\right] }\right\Vert }^{\frac{1}{n}}\ }=0,
\end{equation*}

\noindent where ${\left(T-S\right)}^{\left[n\right]}=\sum^n_{k=0}{{%
\left(-1\right)}^{n-k}C^n_kT^kS^{n-k}}$, for any $n\in {\mathbb{N}}$.

\noindent The quasinilpotent equivalence relation is reflexive and
symmetric. It is also transitive on $L(X)$.

\noindent

\begin{theorem}
\noindent \label{t1.4}Theorem 1.4. Let $T,S\in L(X)$ be two quasinilpotent
equivalent operators. Then%
\begin{equation*}
Sp\left( T\right) =Sp\left( S\right) \text{.}
\end{equation*}
\end{theorem}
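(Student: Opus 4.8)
\noindent\textit{Proof proposal.} Since the quasinilpotent equivalence relation is symmetric, it suffices to prove one inclusion, say $r(S)\subseteq r(T)$ (equivalently $Sp(T)\subseteq Sp(S)$); interchanging the roles of $T$ and $S$ then yields the reverse inclusion, hence equality. So fix $\lambda\in r(S)$ and write $R:=R(\lambda,S)=(\lambda I-S)^{-1}$. The plan is to exhibit an inverse of $\lambda I-T$ explicitly as an operator series in $R$ and the bracket powers $(T-S)^{[n]}$, modelled on the Neumann-type expansion but corrected for noncommutativity. The hypothesis $\lim_{n}\|(T-S)^{[n]}\|^{1/n}=0$ is precisely what makes such a series converge: by the root test, $\|(T-S)^{[n]}R^{n+1}\|^{1/n}\le\|R\|^{(n+1)/n}\|(T-S)^{[n]}\|^{1/n}\to 0$, so $A:=\sum_{n\ge 0}(T-S)^{[n]}R^{n+1}$ is an absolutely convergent element of $L(X)$.

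The algebraic engine of the verification is the recurrence
\begin{equation*}
(T-S)^{[n+1]}=T(T-S)^{[n]}-(T-S)^{[n]}S,
\end{equation*}
which follows from Pascal's identity applied to the definition of the bracket power (equivalently, it is the coefficient identity behind the exponential generating function $\sum_n (T-S)^{[n]}z^n/n!=e^{zT}e^{-zS}$). Combined with the scalar-type relation $SR^{n+1}=\lambda R^{n+1}-R^{n}$, which comes from $SR=\lambda R-I$ and the commutation $SR=RS$, a short telescoping computation gives $(\lambda I-T)A=I$: substituting the recurrence into $TA=\sum_n T(T-S)^{[n]}R^{n+1}$ produces two sums that cancel except for the leading term, leaving $TA=\lambda A-I$. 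Thus $A$ is a right inverse of $\lambda I-T$.

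The main obstacle is that a right inverse alone does not establish bijectivity in the Banach algebra $L(X)$, so I must also produce a left inverse, and this is exactly where the second limit in the definition of quasinilpotent equivalence enters. The mirror computation leads to $C:=\sum_{n\ge 0}(-1)^{n}R^{n+1}(S-T)^{[n]}$, which converges because $\lim_{n}\|(S-T)^{[n]}\|^{1/n}=0$. Applying the bracket recurrence for $S-T$, namely $(S-T)^{[n+1]}=S(S-T)^{[n]}-(S-T)^{[n]}T$, together with $R^{n+1}S=\lambda R^{n+1}-R^{n}$, the same telescoping yields $CT=\lambda C-I$, that is $C(\lambda I-T)=I$. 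Since $\lambda I-T$ then possesses both a left and a right inverse in $L(X)$, the two coincide and $\lambda I-T$ is bijective, so $\lambda\in r(T)$. This establishes $r(S)\subseteq r(T)$; interchanging $T$ and $S$ gives the opposite inclusion, whence $r(T)=r(S)$ and therefore $Sp(T)=Sp(S)$. The only genuinely delicate points are the noncommutativity, handled by the bracket recurrence that replaces ordinary powers of $T-S$, and the necessity of a two-sided inverse, which is precisely why the definition of quasinilpotent equivalence is stated symmetrically.
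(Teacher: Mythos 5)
Your proof is correct: the bracket recurrence $(T-S)^{[n+1]}=T(T-S)^{[n]}-(T-S)^{[n]}S$, the relation $SR^{n+1}=\lambda R^{n+1}-R^{n}$, and the telescoping both check out, and you rightly use the two halves of the quasinilpotent-equivalence hypothesis to produce a right inverse $A$ and a left inverse $C$ separately (the alternating sign in $C$, forced by pulling $T$ off the right of $(S-T)^{[n]}$, is handled correctly). Note, however, that the paper itself never proves Theorem \ref{t1.4}; it is quoted as classical background, so the only in-paper argument to compare with is the proof of its asymptotic analogue, Theorem \ref{d3.14}. There the author works in the quotient algebra $B_{\infty}$, invokes analyticity of $\lambda \mapsto \dot{\{\mathcal{R}(\lambda ,T_h)\}}$, introduces the derivatives $R_n(\lambda)=\frac{1}{n!}\frac{d^n}{d\lambda^n}\dot{\{\mathcal{R}(\lambda ,T_h)\}}$ with Cauchy-type bounds $\frac{r_1M_1}{(r_1-r_0)^{n+1}}$ on nested disks, and expands a series in $(S_h-T_h)^{[n]}$ against these derivatives. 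Your series $A=\sum_n (T-S)^{[n]}R^{n+1}$ is the same object in different clothing, since for a single operator $\frac{d^n}{d\lambda^n}R(\lambda ,S)=(-1)^n n!\,R(\lambda ,S)^{n+1}$; but your formulation buys two things. First, elementarity: convergence follows from the root test alone, with no appeal to analyticity of the resolvent or Cauchy estimates. Second, completeness on the invertibility question: you construct the left inverse explicitly and observe that left and right inverses must coincide, a step that is genuinely necessary (a one-sided inverse does not give bijectivity) and which the paper's asymptotic proof compresses into an ``analogously.''
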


\section{Asymptotic equivalence and asymptotic quasinilpotent equivalence}

\noindent

\begin{definition}
\label{d2.1}We say that two families of operators $\left\{ S_{h}\right\} ,\
\left\{ T_{h}\right\} \ \subset L(X)$, with $h\in \left. (0,1\right] ,$ are
asymptotically equivalent if%
\begin{equation*}
{\mathop{lim}_{h\rightarrow 0}\left\Vert S_{h}-T_{h}\right\Vert =0\ }.
\end{equation*}

\noindent Two families of operators $\left\{ S_{h}\right\} ,\ \left\{
T_{h}\right\} \ \subset L(X)$, with $h\in \left. (0,1\right] $, are
asymptotically quasinilpotent equivalent if\noindent 
\begin{equation*}
{\mathop{\lim}_{n\rightarrow \infty }{{\mathop{\lim \sup}_{h\rightarrow
0}\left\Vert {\left( S_{h}-T_{h}\right) }^{\left[ n\right] }\right\Vert \ }}%
^{\frac{1}{n}}\ }={\mathop{\lim}_{n\rightarrow \infty }{{\mathop{\lim \sup}%
_{h\rightarrow 0}\left\Vert {\left( T_{h}-S_{h}\right) }^{\left[ n\right]
}\right\Vert \ }}^{\frac{1}{n}}\ }=0.
\end{equation*}
\end{definition}

\begin{proposition}
\noindent \label{p2.2}The asymptotic (quasinilpotent) equivalence between
two families of operators $\left\{ S_{h}\right\} ,\ \left\{ T_{h}\right\}
\subset L(X)$ is an equivalence relation (i.e. reflexive, symmetric and
transitive) on $L\left( X\right) $.
\end{proposition}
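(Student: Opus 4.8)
The plan is to prove the two relations in turn, disposing of reflexivity and symmetry quickly in each case and concentrating the effort on transitivity of the quasinilpotent version. For asymptotic equivalence the three axioms follow immediately from properties of the norm: reflexivity because $\left\Vert S_{h}-S_{h}\right\Vert =0$; symmetry because $\left\Vert S_{h}-T_{h}\right\Vert =\left\Vert T_{h}-S_{h}\right\Vert $; and transitivity because the triangle inequality gives $\left\Vert S_{h}-U_{h}\right\Vert \le \left\Vert S_{h}-T_{h}\right\Vert +\left\Vert T_{h}-U_{h}\right\Vert $, so that two families asymptotically equivalent to a common third one are asymptotically equivalent to each other.

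For asymptotic quasinilpotent equivalence, reflexivity and symmetry are again quick. Since $\sum_{k=0}^{n}(-1)^{n-k}C_{k}^{n}=(1-1)^{n}=0$ for $n\ge 1$, one has $(S_{h}-S_{h})^{[n]}=S_{h}^{n}\sum_{k=0}^{n}(-1)^{n-k}C_{k}^{n}=0$, so the defining double limit vanishes trivially; symmetry is already built into the definition, which requires both of the limits (the one formed from $S_{h}-T_{h}$ and the one formed from $T_{h}-S_{h}$) to be zero.

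The substantial step is transitivity, for which I would first establish the algebraic identity
\begin{equation*}
(S_{h}-U_{h})^{[n]}=\sum_{k=0}^{n}C_{k}^{n}\,(S_{h}-T_{h})^{[k]}\,(T_{h}-U_{h})^{[n-k]},
\end{equation*}
valid for every $n$ and every $h$. I would prove it by induction on $n$, the engine being the Leibniz-type recurrence $(A-B)^{[n+1]}=A\,(A-B)^{[n]}-(A-B)^{[n]}\,B$, which is itself a one-line consequence of Pascal's rule applied to the definition of the bracket. In the inductive step, substituting $S_{h}(S_{h}-T_{h})^{[k]}=(S_{h}-T_{h})^{[k+1]}+(S_{h}-T_{h})^{[k]}T_{h}$ and $(T_{h}-U_{h})^{[m]}U_{h}=T_{h}(T_{h}-U_{h})^{[m]}-(T_{h}-U_{h})^{[m+1]}$ (both coming from the same recurrence) makes the middle terms cancel, and reassembling the two surviving sums by Pascal's rule reproduces the identity at level $n+1$.

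With the identity in hand, I would take norms and apply $\limsup_{h\rightarrow 0}$. Setting $\alpha _{k}=\limsup_{h\rightarrow 0}\Vert (S_{h}-T_{h})^{[k]}\Vert $, $\beta _{m}=\limsup_{h\rightarrow 0}\Vert (T_{h}-U_{h})^{[m]}\Vert $ and $\gamma _{n}=\limsup_{h\rightarrow 0}\Vert (S_{h}-U_{h})^{[n]}\Vert $, the subadditivity of $\limsup $ over a finite sum and its submultiplicativity for bounded nonnegative quantities yield $\gamma _{n}\le \sum_{k=0}^{n}C_{k}^{n}\alpha _{k}\beta _{n-k}$. The hypotheses state that $\alpha _{k}^{1/k}\rightarrow 0$ and $\beta _{m}^{1/m}\rightarrow 0$, so for every $\varepsilon >0$ there is a constant $C$ with $\alpha _{k}\le C\varepsilon ^{k}$ and $\beta _{m}\le C\varepsilon ^{m}$ for all indices; then $\gamma _{n}\le C^{2}\varepsilon ^{n}\sum_{k=0}^{n}C_{k}^{n}=C^{2}(2\varepsilon )^{n}$, whence $\gamma _{n}^{1/n}\le C^{2/n}\,2\varepsilon \rightarrow 2\varepsilon $, and letting $\varepsilon \rightarrow 0$ forces $\lim_{n\rightarrow \infty }\gamma _{n}^{1/n}=0$, which is exactly the transitivity statement. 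I expect the combinatorial identity, together with the care needed to push the estimate through the $\limsup $ in $h$ and then through the limit in $n$, to be the only genuine obstacle; the remaining verifications are routine bookkeeping.
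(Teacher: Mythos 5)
Your proposal is correct and follows essentially the same route as the paper: reflexivity and symmetry read off from the definitions, transitivity of asymptotic equivalence by the triangle inequality, and transitivity of the quasinilpotent version via the convolution identity $(S_h-U_h)^{[n]}=\sum_{k=0}^{n}C_{n}^{k}\,(S_h-T_h)^{[k]}(T_h-U_h)^{[n-k]}$ together with the $\varepsilon^{k}$-bounds yielding $\gamma_n\le C^{2}(2\varepsilon)^{n}$ and hence $\gamma_n^{1/n}\rightarrow 0$. The only differences are cosmetic and in your favor: you actually prove the identity (which the paper quotes with a spurious $(-1)^{n-k}$ factor), and by passing to $\limsup_{h\rightarrow 0}$ before estimating you avoid the paper's $h$-dependent constant $M_{\varepsilon}$.
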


\noindent

\begin{proof}
It is evidently that the asymptotic equivalence is reflexive and symmetric.

\noindent Let $\left\{ S_{h}\right\} ,\ \left\{ T_{h}\right\} ,\ \left\{
U_{h}\right\} \subset L(X)$ be families of linear bounded operators such
that $\left\{ S_{h}\right\} ,\ \left\{ T_{h}\right\} $ and $\left\{
U_{h}\right\} ,\ \left\{ T_{h}\right\} $ are respectively asymptotically
equivalent. Then\noindent 
\begin{align*}
& {\mathop{\lim \sup}_{h\rightarrow 0}\left\Vert S_{h}-U_{h}\right\Vert } \\
& ={{\mathop{\lim \sup}_{h\rightarrow 0}\left\Vert
S_{h}-T_{h}+T_{h}-U_{h}\right\Vert \ }\leq {\mathop{\lim }_{h\rightarrow
0}\left\Vert S_{h}-T_{h}\right\Vert +{\mathop{\lim }_{h\rightarrow
0}\left\Vert T_{h}-U_{h}\right\Vert \ }}} \\
{}& {{=\ }0}\text{{\ }.}
\end{align*}

\noindent The asymptotic quasinilpotent equivalence is also reflexive and
symmetric.

\noindent In order to prove that it is transitive, let $\left\{
S_{h}\right\} ,\ \left\{ T_{h}\right\} ,\ \left\{ P_{h}\right\} \subset L(X)$
such that $\left\{ T_{h}\right\} ,\ \left\{ P_{h}\right\} $ and $\left\{
S_{h}\right\} ,\ \left\{ P_{h}\right\} $ be respectively asymptotically
quasinilpotent equivalent. Then for any $\varepsilon $ $>$ 0 there exists a $%
n_{\varepsilon }\in {\mathbb{N}}$ such that%
\begin{equation*}
\left( T_{h}-P_{h}\right) ^{\left[ j\right] }<{\varepsilon }^{j}
\end{equation*}%
and 
\begin{equation*}
{\left( P_{h}-S_{h}\right) }^{\left[ n-j\right] }<{\varepsilon }^{n-j},
\end{equation*}

\noindent \noindent for every $j,n-j>n_{\varepsilon }$ and $h\in \left. (0,1%
\right] $.

\noindent Taking 
\begin{equation*}
M_{\varepsilon }={\mathop{\max }_{1\leq j\leq n_{\varepsilon }}\left\{ \frac{%
\left\Vert {\left( T_{h}-P_{h}\right) }^{\left[ j\right] }\right\Vert }{{%
\varepsilon }^{j}},\frac{\left\Vert {\left( P_{h}-S_{h}\right) }^{\left[ j%
\right] }\right\Vert }{{\varepsilon }^{j}},1\right\} \ }
\end{equation*}%
we obtain%
\begin{equation*}
\left\Vert {\left( T_{h}-P_{h}\right) }^{\left[ j\right] }\right\Vert <{%
\varepsilon }^{j}M_{\varepsilon }
\end{equation*}%
and 
\begin{equation*}
\left\Vert {\left( P_{h}-S_{h}\right) }^{\left[ j\right] }\right\Vert <{%
\varepsilon }^{j}M_{\varepsilon },
\end{equation*}

for every $j\in {\mathbb{N}}$ and $h\in \left. (0,1\right] .$

\noindent In view of above inequality and the following equality\noindent 
\begin{equation*}
{(T-S)}^{\left[ n\right] }=\sum_{k=0}^{n}{{\left( -1\right) }^{n-k}C_{n}^{k}{%
\left( T-P\right) }^{\left[ k\right] }{\left( P-S\right) }^{\left[ n-k\right]
}},
\end{equation*}

for every\textit{\ }$n\in {\mathbb{N}}$\textit{\ }and\textit{\ }$P\in L(X)$,
it results that\noindent 
\begin{eqnarray*}
\left\Vert {\left( T_{h}-S_{h}\right) }^{\left[ n\right] }\right\Vert  &\leq
&\sum_{k=0}^{n}{C_{n}^{k}\left\Vert {\left( T_{h}-P_{h}\right) }^{\left[ k%
\right] }\right\Vert \left\Vert {\left( P_{h}-S_{h}\right) }^{\left[ n-k%
\right] }\right\Vert } \\
&\leq &\sum_{k=0}^{n}{C_{n}^{k}{\varepsilon }^{k}{\varepsilon }^{n-k}{%
M_{\varepsilon }}^{2}} \\
&=&{(2\varepsilon )}^{n}{M_{\varepsilon }}^{2}\text{,}
\end{eqnarray*}

{for every $n\in {\mathbb{N}}$ and $h\in \left. (0,1\right] $. }

\noindent Therefore 
\begin{equation*}
{\mathop{\lim \sup}_{h\to 0} \left\|{\left(T_h-S_h\right)}^{\left[n\right]%
}\right\|\ }\le {(2\varepsilon )}^n{M_{\varepsilon }}^2
\end{equation*}
and thus 
\begin{equation*}
{{\mathop{\lim \sup}_{h\to 0} \left\|{\left(T_h-S_h\right)}^{\left[n\right]%
}\right\|\ }}^{\frac{1}{n}}\le 2\varepsilon {M_{\varepsilon }}^{2/n}.
\end{equation*}
Consequently 
\begin{equation*}
{\mathop{\lim }_{n\to \infty } {{\mathop{\lim \sup}_{h\to 0} \left\|{%
\left(T_h-S_h\right)}^{\left[n\right]}\right\|\ }}^{\frac{1}{n}}\ }\le
2\varepsilon ,
\end{equation*}
for any $\varepsilon $ $>$ 0.

\noindent Analogously we prove that ${\mathop{\lim }_{n\to \infty } {{%
\mathop{\lim \sup}_{h\to 0} \left\|{\left(S_h-T_h\right)}^{\left[n\right]%
}\right\|\ }}^{\frac{1}{n}}\ }=0$.
\end{proof}

\begin{proposition}
\label{p2.3}Let $\left\{ S_{h}\right\} ,\ \left\{ T_{h}\right\} \subset L(X)$
be asymptotically equivalent.

\begin{description}
\item[i)] If $\left\{ S_{h}\right\} $ is a bounded family of operators, then 
$\left\{ T_{h}\right\} $ is also bounded and conversely;

\item[ii)] $\left\{ S_{h}\right\} ,\ \left\{ T_{h}\right\} $ are asymptotically
commuting (i.e. $\mathop{lim}_{h\rightarrow 0}\left\Vert S_{h}T_{h}-{T_{h}S}%
_{h}\right\Vert =0\ $);

\item[iii)] Let $\left\{ U_{h}\right\} \subset L(X)$ be a bounded family of
operators such that $\mathop{lim}_{h\rightarrow 0}\left\Vert S_{h}U_{h}-{%
U_{h}S}_{h}\right\Vert =0\ $. Then $\mathop{lim}_{h\rightarrow 0}\left\Vert
U_{h}T_{h}-{T_{h}U}_{h}\right\Vert =0\ $.
\end{description}
\end{proposition}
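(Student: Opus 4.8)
The plan is to reduce each of the three assertions to the triangle inequality together with submultiplicativity of the operator norm, after rewriting the operator in question as a sum of terms each of which is either a commutator that vanishes by hypothesis or a product of a norm-bounded factor with a factor whose norm tends to $0$. For part i) I would simply estimate
\[
\left\Vert T_{h}\right\Vert \leq \left\Vert S_{h}\right\Vert +\left\Vert S_{h}-T_{h}\right\Vert .
\]
Since $\mathop{\lim}_{h\rightarrow 0}\left\Vert S_{h}-T_{h}\right\Vert =0$, the quantity $\left\Vert S_{h}-T_{h}\right\Vert $ is eventually at most $1$, so a bound $\left\Vert S_{h}\right\Vert \leq M$ for $\left\{ S_{h}\right\} $ forces $\left\Vert T_{h}\right\Vert \leq M+1$ for all sufficiently small $h$; thus $\left\{ T_{h}\right\} $ inherits boundedness. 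The converse is identical after interchanging the roles of $S_{h}$ and $T_{h}$, which is legitimate because asymptotic equivalence is symmetric by Proposition \ref{p2.2}.

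For part ii) I would put $D_{h}=S_{h}-T_{h}$ and substitute $T_{h}=S_{h}-D_{h}$ into the commutator. The quadratic terms $S_{h}^{2}$ cancel, leaving the clean identity
\[
S_{h}T_{h}-T_{h}S_{h}=D_{h}S_{h}-S_{h}D_{h},
\]
whence $\left\Vert S_{h}T_{h}-T_{h}S_{h}\right\Vert \leq 2\left\Vert S_{h}\right\Vert \left\Vert D_{h}\right\Vert $. In the bounded setting of part i) the factor $\left\Vert S_{h}\right\Vert $ stays bounded while $\left\Vert D_{h}\right\Vert =\left\Vert S_{h}-T_{h}\right\Vert \rightarrow 0$, so the product tends to $0$ and the families are asymptotically commuting. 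I would flag here that the boundedness of $\left\{ S_{h}\right\} $ is genuinely needed for this estimate to vanish, so the statement should be read in the bounded regime established by part i).

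For part iii) the key move is the three-term decomposition obtained by inserting and subtracting the mixed products $S_{h}U_{h}$ and $U_{h}S_{h}$:
\[
U_{h}T_{h}-T_{h}U_{h}=\left( U_{h}S_{h}-S_{h}U_{h}\right) +U_{h}\left( T_{h}-S_{h}\right) +\left( S_{h}-T_{h}\right) U_{h}.
\]
The first bracket is exactly $-\left( S_{h}U_{h}-U_{h}S_{h}\right) $, whose norm tends to $0$ by hypothesis; the remaining two terms are each bounded by $\left\Vert U_{h}\right\Vert \left\Vert S_{h}-T_{h}\right\Vert $, which tends to $0$ because $\left\{ U_{h}\right\} $ is bounded and $\left\{ S_{h}\right\} ,\left\{ T_{h}\right\} $ are asymptotically equivalent. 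Summing these estimates gives $\mathop{\lim}_{h\rightarrow 0}\left\Vert U_{h}T_{h}-T_{h}U_{h}\right\Vert =0$.

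The main obstacle, and really the only non-mechanical point, is choosing the regroupings so that no term ever pairs two potentially unbounded factors. For ii) this amounts to peeling $D_{h}$ off against a single bounded copy of $S_{h}$; for iii) it amounts to manufacturing the given commutator $U_{h}S_{h}-S_{h}U_{h}$ out of thin air by adding and subtracting the mixed products, leaving remainders that each carry the small factor $\left\Vert S_{h}-T_{h}\right\Vert $ against the bounded factor $\left\Vert U_{h}\right\Vert $. Once these decompositions are written down, the proof is finished by the triangle inequality and submultiplicativity alone.
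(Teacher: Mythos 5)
Your proposal is correct and follows essentially the same route as the paper: part i) by the triangle inequality $\left\Vert T_{h}\right\Vert \leq \left\Vert S_{h}\right\Vert +\left\Vert S_{h}-T_{h}\right\Vert$, part ii) by the identity $S_{h}T_{h}-T_{h}S_{h}=(S_{h}-T_{h})S_{h}-S_{h}(S_{h}-T_{h})$ obtained by inserting $\pm S_{h}^{2}$, and part iii) by inserting the mixed products $\pm S_{h}U_{h}$ and $\pm U_{h}S_{h}$, all closed by submultiplicativity and boundedness. Your remark that ii) genuinely requires boundedness of $\left\{ S_{h}\right\}$ is apt, since the paper's own estimate $2\mathop{\lim \sup}_{h\rightarrow 0}\left\Vert S_{h}\right\Vert \left\Vert S_{h}-T_{h}\right\Vert$ also uses it implicitly.
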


\begin{proof}
i) If $\left\{ S_{h}\right\} $ is a bounded family of operators, then there
is ${\mathop{\lim \sup }_{h\rightarrow 0}\left\Vert S_{h}\right\Vert \ }%
<\infty $. Since 
\begin{equation*}
{\mathop{\lim }_{h\rightarrow 0}\left\Vert S_{h}-T_{h}\right\Vert =0\ },
\end{equation*}%
it follows that\noindent 
\begin{eqnarray*}
{\mathop{\lim \sup }_{h\rightarrow 0}\left\Vert T_{h}\right\Vert \ } &=&{%
\mathop{\lim \sup }_{h\rightarrow 0}\left\Vert T_{h}-S_{h}+S_{h}\right\Vert
\ } \\
&\leq &{\mathop{\lim }_{h\rightarrow 0}\left\Vert S_{h}-T_{h}\right\Vert \ }+%
{\mathop{\lim \sup }_{h\rightarrow 0}\left\Vert S_{h}\right\Vert \ }<\infty .
\end{eqnarray*}

Therefore $\left\{ T_{h}\right\} $ is a bounded family of operators.

\noindent Analogously we can prove that if $\left\{ T_{h}\right\} $ is a
bounded family of operators, than $\left\{ S_{h}\right\} $ is a bounded
family of operators.

ii) 
\begin{equation*}
{\mathop{\lim \sup}_{h\rightarrow 0}\left\Vert S_{h}T_{h}-{T_{h}S}%
_{h}\right\Vert \ }={\mathop{\lim \sup}_{h\rightarrow 0}\left\Vert S_{h}T_{h}-{{{%
S_{h}}^{2}+{S_{h}}^{2}-T}_{h}S}_{h}\right\Vert \ }\leq 
\end{equation*}%
\begin{eqnarray*}
&\leq &{\mathop{\lim \sup}_{h\rightarrow 0}\left\Vert S_{h}\left(
S_{h}-T_{h}\right) \right\Vert \ }+{\mathop{\lim \sup}_{h\rightarrow
0}\left\Vert \left( S_{h}-T_{h}\right) S_{h}\right\Vert \ } \\
&\leq &{\mathop{{\rm 2\lim\sup}}_{h\rightarrow 0}\left\Vert S_{h}\right\Vert
\left\Vert S_{h}-T_{h}\right\Vert \ }\leq 0.
\end{eqnarray*}%
iii) 
\begin{equation*}
{\mathop{\lim \sup}_{h\rightarrow 0}\left\Vert T_{h}U_{h}-{U_{h}T}%
_{h}\right\Vert \ }={\mathop{\lim \sup}_{h\rightarrow 0}\left\Vert
T_{h}U_{h}-S_{h}U_{h}+S_{h}U_{h}-{U_{h}S}_{h}+{U_{h}S}_{h}-{U_{h}T}%
_{h}\right\Vert \ }\leq 
\end{equation*}%
\begin{equation*}
\leq {\mathop{\lim \sup}_{h\rightarrow 0}\left\Vert
T_{h}U_{h}-S_{h}U_{h}\right\Vert \ }+{\mathop{\lim \sup}_{h\rightarrow
0}\left\Vert S_{h}U_{h}-{U_{h}S}_{h}\right\Vert \ }+{\mathop{\lim \sup}%
_{h\rightarrow 0}\left\Vert {U_{h}S}_{h}-{U_{h}T}_{h}\right\Vert \ }\leq 
\end{equation*}%
\begin{equation*}
\leq \mathrm{2}{\mathop{\lim \sup}_{h\rightarrow 0}\left\Vert U_{h}\right\Vert
\left\Vert T_{h}-S_{h}\right\Vert }
\end{equation*}

Since $\left\{ U_{h}\right\} $ is a bounded family of operators, then there
is ${\mathop{\lim \sup }_{h\rightarrow 0}\left\Vert U_{h}\right\Vert \ }%
<\infty $. So%
\begin{equation*}
{\mathop{\lim }_{h\rightarrow 0}\left\Vert U_{h}T_{h}-{T_{h}U}%
_{h}\right\Vert =0}\text{.}
\end{equation*}
\end{proof}

\begin{proposition}
\label{p2.4}Let $\left\{ S_{h}\right\} ,\ \left\{ T_{h}\right\} \subset L(X)$
be two bounded families of operators such that $\mathop{lim}_{h\rightarrow
0}\left\Vert S_{h}T_{h}-T_{h}S_{h}\right\Vert \ =0$. Then

\begin{description}
\item[i)] ${\mathop{lim}_{h\rightarrow 0}\left\Vert
S_{h}^{n}T_{h}^{m}-T_{h}^{m}S_{h}^{n}\right\Vert \ }=0$, for any $n,m\in {%
\mathbb{N}}$;

\item[ii)] ${\mathop{lim}_{h\rightarrow 0}\left\Vert {\left(
S_{h}-T_{h}\right) }^{\left[ n\right] }\right\Vert \ }={\mathop{lim}%
_{h\rightarrow 0}\left\Vert {\left( S_{h}-T_{h}\right) }^{n}\right\Vert \ }$%
, for any $n\in {\mathbb{N}}$;

\item[iii)] ${\mathop{lim}_{h\rightarrow 0}\left\Vert {(S_{h}T_{h})}%
^{n}-S_{h}^{n}T_{h}^{n}\right\Vert \ }=0$, for any $n\in {\mathbb{N}}$.
\end{description}
\end{proposition}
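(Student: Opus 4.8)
The plan is to prove part i) first, since it is the mechanism behind ii) and iii). For i) I would begin with the single telescoping identity
\[
S_hT_h^{m}-T_h^{m}S_h=\sum_{k=0}^{m-1}T_h^{k}\left(S_hT_h-T_hS_h\right)T_h^{m-1-k},
\]
which displays $S_hT_h^{m}-T_h^{m}S_h$ as a sum of $m$ terms, each the product of the single ``small'' factor $S_hT_h-T_hS_h$ with powers of $T_h$. Since $\{T_h\}$ is bounded, each summand has norm at most $\left(\limsup_{h\to0}\|T_h\|\right)^{m-1}\|S_hT_h-T_hS_h\|$, so the whole sum tends to $0$ as $h\to0$; this is the case $n=1$. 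I would then fix $m$ and induct on $n$ by means of
\[
S_h^{n+1}T_h^{m}-T_h^{m}S_h^{n+1}=S_h\left(S_h^{n}T_h^{m}-T_h^{m}S_h^{n}\right)+\left(S_hT_h^{m}-T_h^{m}S_h\right)S_h^{n},
\]
using the boundedness of $\{S_h\}$ to absorb the factors $S_h$ and $S_h^{n}$; both bracketed quantities tend to $0$ (the first by the induction hypothesis, the second by the base step), which gives i).

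For ii) it suffices to show that $\|(S_h-T_h)^{[n]}-(S_h-T_h)^{n}\|\to0$, because the reverse triangle inequality then forces $\big|\,\|(S_h-T_h)^{[n]}\|-\|(S_h-T_h)^{n}\|\,\big|\to0$. Writing $\delta_n=(S_h-T_h)^{[n]}-(S_h-T_h)^{n}$, I would induct on $n$ (the base case $\delta_1=0$ being immediate) using the bracket recursion
\[
(S_h-T_h)^{[n+1]}=S_h(S_h-T_h)^{[n]}-(S_h-T_h)^{[n]}T_h,
\]
which follows from the definition of ${(\cdot)}^{[n]}$ because left and right multiplication commute, together with $(S_h-T_h)^{n+1}=S_h(S_h-T_h)^{n}-T_h(S_h-T_h)^{n}$. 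Subtracting and simplifying reduces $\delta_{n+1}$ to
\[
T_h(S_h-T_h)^{n}-(S_h-T_h)^{n}T_h+S_h\delta_n-\delta_nT_h.
\]
The last two terms vanish in the limit by boundedness and the induction hypothesis on $\delta_n$. For the first two, note that $T_h$ asymptotically commutes with $S_h-T_h$ (the $T_h^{2}$ contributions cancel, leaving $S_hT_h-T_hS_h\to0$), so the argument of part i), applied to the bounded families $\{T_h\}$ and $\{S_h-T_h\}$, shows $\|T_h(S_h-T_h)^{n}-(S_h-T_h)^{n}T_h\|\to0$. Hence $\|\delta_{n+1}\|\to0$.

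For iii) I would induct on $n$ as well, writing $(S_hT_h)^{n+1}=(S_hT_h)^{n}S_hT_h$ and substituting $(S_hT_h)^{n}=S_h^{n}T_h^{n}+\varepsilon_n$ with $\|\varepsilon_n\|\to0$ from the induction hypothesis. The term $\varepsilon_nS_hT_h$ has norm at most $\|\varepsilon_n\|\|S_h\|\|T_h\|\to0$, and the remaining term $S_h^{n}\left(T_h^{n}S_h\right)T_h$ is handled by part i): replacing $T_h^{n}S_h$ by $S_hT_h^{n}$ costs an error of norm $\to0$, and the flanking powers $S_h^{n}$ and $T_h$ are absorbed by boundedness, leaving $S_h^{n+1}T_h^{n+1}$ up to terms that vanish as $h\to0$.

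The conceptual heart of the argument is part i); once the telescoping identity and the nested induction are in place, ii) and iii) are essentially bookkeeping, the only non-routine ingredient being the bracket recursion used in ii). The step I would be most careful about is the interplay of the two limits: for each fixed $n$ (and $m$) one generates only finitely many terms, each a product of a single asymptotically small commutator with uniformly bounded operator factors, and boundedness is exactly what turns ``bounded $\times$ small'' into ``$\to0$''. One must keep $n$ fixed while letting $h\to0$ and must not let $n\to\infty$, since bounds such as $\|S_h\|^{n}$ are not uniform in $n$.
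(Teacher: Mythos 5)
Your proof is correct, and for part i) it is essentially the paper's own argument: the paper proves $\lim_{h\rightarrow 0}\Vert S_h^{n}T_h-T_hS_h^{n}\Vert =0$ by induction on $n$ using exactly your decomposition $S_h^{n+1}T_h-T_hS_h^{n+1}=S_h\left(S_h^{n}T_h-T_hS_h^{n}\right)+\left(S_hT_h-T_hS_h\right)S_h^{n}$, and then gets general $m$ by applying that statement a second time to the pair $\{S_h^{n}\},\{T_h\}$, whereas you get general $m$ at the base level via the telescoping identity and then run the same induction on $n$; these organizations are interchangeable. The genuine difference is in parts ii) and iii): the paper gives no proof at all there (it says only that they ``can be proved analogously i)''), while you supply complete and correct reductions to i) --- for ii) via the recursion $\left(S_h-T_h\right)^{[n+1]}=S_h\left(S_h-T_h\right)^{[n]}-\left(S_h-T_h\right)^{[n]}T_h$ (the same recursion the paper itself invokes without proof in Proposition \ref{p2.5}) combined with the observation that $T_h$ asymptotically commutes with $S_h-T_h$, and for iii) via the substitution $(S_hT_h)^{n}=S_h^{n}T_h^{n}+\varepsilon _{n}$ and one application of i). Two small remarks: first, as stated, ii) tacitly presumes the two limits exist; what your argument actually proves is the cleaner and stronger fact $\Vert \left(S_h-T_h\right)^{[n]}-\left(S_h-T_h\right)^{n}\Vert \rightarrow 0$, from which equality of the norms in the limit follows by the reverse triangle inequality, and this is the only reasonable reading of the claim. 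Second, your closing caution is exactly the right one: all bounds here involve finitely many terms for fixed $n$ and $m$, each a product of one asymptotically small commutator with uniformly bounded factors, and nothing in the argument is (or needs to be) uniform in $n$.
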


\begin{proof}
i) We prove that ${\mathop{\lim }_{h\rightarrow 0}\left\Vert
S_{h}^{n}T_{h}-T_{h}S_{h}^{n}\right\Vert \ }=0$, for any $n\in {\mathbb{N}}$%
. For $n=2$ we have\noindent 
\begin{equation*}
{\mathop{\lim \sup}_{h\rightarrow 0}\left\Vert
S_{h}^{2}T_{h}-T_{h}S_{h}^{2}\right\Vert \ }={\mathop{\lim \sup}_{h\rightarrow
0}\left\Vert S_{h}\left( S_{h}T_{h}\right) -S_{h}\left( T_{h}S_{h}\right)
+\left( S_{h}T_{h}\right) S_{h}-(T_{h}S_{h})S_{h}\right\Vert \ }\leq 
\end{equation*}%
\begin{equation*}
\leq 2{\mathop{\lim \sup}_{h\rightarrow 0}\left\Vert \left( S_{h}T_{h}\right)
-\left( T_{h}S_{h}\right) \right\Vert \left\Vert S_{h}\right\Vert \ }=0.
\end{equation*}%
For $n=3$ we have%
\begin{equation*}
{\mathop{\lim \sup}_{h\rightarrow 0}\left\Vert
S_{h}^{3}T_{h}-T_{h}S_{h}^{3}\right\Vert }={\mathop{\lim \sup}_{h\rightarrow
0}\left\Vert S_{h}\left( S_{h}^{2}T_{h}\right) -S_{h}\left(
T_{h}S_{h}^{2}\right) +\left( S_{h}T_{h}\right)
S_{h}^{2}-(T_{h}S_{h})S_{h}^{2}\right\Vert \ }\leq 
\end{equation*}%
\begin{equation*}
\leq {\mathop{\lim \sup}_{h\rightarrow 0}\left\Vert
S_{h}^{2}T_{h}-T_{h}S_{h}^{2}\right\Vert \left\Vert S_{h}\right\Vert \ }+{%
\mathop{\lim }_{h\rightarrow 0}\left\Vert S_{h}T_{h}-T_{h}S_{h}\right\Vert
\left\Vert S_{h}^{2}\right\Vert \ }=0.
\end{equation*}

Considering relation ${\mathop{\lim }_{h\rightarrow 0}\left\Vert
S_{h}^{n}T_{h}-T_{h}S_{h}^{n}\right\Vert \ }=0$ true we prove that 
\begin{equation*}
{\mathop{\lim }_{h\rightarrow 0}\left\Vert
S_{h}^{n+1}T_{h}-T_{h}S_{h}^{n+1}\right\Vert \ }=0.
\end{equation*}%
\begin{equation*}
{\mathop{\lim \sup}_{h\rightarrow 0}\left\Vert
S_{h}^{n+1}T_{h}-T_{h}S_{h}^{n+1}\right\Vert \ }={\mathop{\lim \sup}%
_{h\rightarrow 0}\left\Vert S_{h}\left( S_{h}^{n}T_{h}\right) -S_{h}\left(
T_{h}S_{h}^{n}\right) +\left( S_{h}T_{h}\right)
S_{h}^{n}-(T_{h}S_{h})S_{h}^{n}\right\Vert \ }\leq 
\end{equation*}%
\begin{equation*}
\leq {\mathop{\lim \sup}_{h\rightarrow 0}\left\Vert
S_{h}^{n}T_{h}-T_{h}S_{h}^{n}\right\Vert \left\Vert S_{h}\right\Vert \ }+{%
\mathop{\lim \sup}_{h\rightarrow 0}\left\Vert S_{h}T_{h}-T_{h}S_{h}\right\Vert
\left\Vert S_{h}^{n}\right\Vert \ }=0.
\end{equation*}

Applying above relation to $S_{h}^{n}$ and $T_{h}$, it follows that\noindent 
\begin{equation*}
{\mathop{\lim }_{h\rightarrow 0}\left\Vert
S_{h}^{n}T_{h}^{m}-T_{h}^{m}S_{h}^{n}\right\Vert \ }=0,
\end{equation*}%
for every $n,m\in {\mathbb{N}}$.

ii) and iii) can be proved analogously i).
\end{proof}

\begin{proposition}
\label{p2.5} Let $\left\{ S_{h}\right\} ,\ \left\{ T_{h}\right\} \ \subset
L\left( X\right) $ be two bounded families of operators.

\begin{description}
\item[i)] If $\left\{ S_{h}\right\} ,\ \left\{ T_{h}\right\} \ $ are
asymptotically equivalent, then are asymptotically quasinilpotent equivalent.

\item[ii)] If $\mathop{\lim }_{h\rightarrow 0}\left\Vert
S_{h}T_{h}-T_{h}S_{h}\right\Vert \ =0$ and $\mathop{\lim }_{n\rightarrow
\infty }\mathop{\lim \sup}_{h\rightarrow 0}\left\Vert {\left( S_{h}-T_{h}\right) 
}^{\left[ n\right] }\right\Vert \ ^{\frac{1}{n}}\ =0$, then $\left\{
S_{h}\right\} ,\ \left\{ T_{h}\right\} $ are asymptotically quasinilpotent
equivalent, i.e.%
\begin{equation*}
{\mathop{\lim }_{n\rightarrow \infty }{{\mathop{\lim \sup}_{h\rightarrow
0}\left\Vert {\left( T_{h}-S_{h}\right) }^{\left[ n\right] }\right\Vert \ }}%
^{\frac{1}{n}}\ }=0.
\end{equation*}

\item[iii)] Let $\left\{ A_{h}\right\} \subset L(X)$ be a bounded families
of operators. If $\left\{ S_{h}\right\} ,\ \left\{ T_{h}\right\} $ are
asymptotically quasinilpotent equivalent and $\mathop{\lim }_{h\rightarrow
0}\left\Vert S_{h}A_{h}-A_{h}S_{h}\right\Vert \ =0$, then it is not
necessary that $\mathop{\lim }_{h\rightarrow 0}\left\Vert
T_{h}A_{h}-A_{h}T_{h}\right\Vert \ =0$.
\end{description}
\end{proposition}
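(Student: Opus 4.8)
The plan is to treat the three parts separately, leaning on the bracket identities and on Proposition~\ref{p2.4}, which is available since all families here are assumed bounded.

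For part i) the obstacle is that the crude estimate $\|(T_h-S_h)^{[n]}\|\le(\|T_h\|+\|S_h\|)^n$ only gives a bound of order $(2M)^n$ and never sees the smallness of $\|S_h-T_h\|$. The fix is a sharper bound carrying a single factor of $\|T_h-S_h\|$. First I would record the recursion $(T-S)^{[n+1]}=T(T-S)^{[n]}-(T-S)^{[n]}S$, which follows from the defining sum (or from writing $(T-S)^{[n]}=(L_T-R_S)^n(I)$ with the commuting multiplications $L_T(A)=TA$, $R_S(A)=AS$). Rewriting the right-hand side as $(T-S)(T-S)^{[n]}+\left(S(T-S)^{[n]}-(T-S)^{[n]}S\right)$ gives
\[
\left\|(T-S)^{[n+1]}\right\|\le\left(\|T-S\|+2\|S\|\right)\left\|(T-S)^{[n]}\right\|,
\]
and since $(T-S)^{[1]}=T-S$, induction yields $\|(T-S)^{[n]}\|\le\|T-S\|\,(2\|S\|+\|T-S\|)^{n-1}$. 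Applying this to $S_h,T_h$ with $\|S_h\|\le M$ and $\|S_h-T_h\|\to0$, for each fixed $n$ the factor $\|T_h-S_h\|$ forces $\limsup_{h\to0}\|(T_h-S_h)^{[n]}\|=0$, hence $\limsup_{h\to0}\|(T_h-S_h)^{[n]}\|^{1/n}=0$ and a fortiori the limit in $n$ is $0$; the same estimate with $S,T$ interchanged handles $(S_h-T_h)^{[n]}$.

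For part ii) the obstacle is relating $(T-S)^{[n]}$ to $(S-T)^{[n]}$ when the families commute only asymptotically. I would first establish the algebraic identity
\[
(T-S)^{[n]}-(-1)^n(S-T)^{[n]}=\sum_{j=0}^{n}(-1)^{n-j}C_n^j\left(T^jS^{n-j}-S^{n-j}T^j\right),
\]
obtained by reindexing the two defining sums. Under the hypothesis $\lim_{h\to0}\|S_hT_h-T_hS_h\|=0$ and boundedness, Proposition~\ref{p2.4} i) gives $\lim_{h\to0}\|T_h^jS_h^{n-j}-S_h^{n-j}T_h^j\|=0$ for every $j$, so for each fixed $n$ the right-hand side tends to $0$ in norm as $h\to0$. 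Consequently $\limsup_{h\to0}\|(T_h-S_h)^{[n]}\|=\limsup_{h\to0}\|(S_h-T_h)^{[n]}\|$, and taking $n$-th roots (using that $x\mapsto x^{1/n}$ commutes with $\limsup$ on nonnegative quantities) and then the limit in $n$ gives
\[
\lim_{n\to\infty}\limsup_{h\to0}\left\|(T_h-S_h)^{[n]}\right\|^{1/n}\le\lim_{n\to\infty}\limsup_{h\to0}\left\|(S_h-T_h)^{[n]}\right\|^{1/n}=0,
\]
which is the assertion.

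For part iii) a constant (in $h$) finite-dimensional example suffices. I would take $S_h\equiv0$, $T_h\equiv T=\begin{pmatrix}0&1\\0&0\end{pmatrix}$ and $A_h\equiv A=\begin{pmatrix}0&0\\1&0\end{pmatrix}$ on $\mathbb{C}^2$. Since $T^2=0$, both $(T-0)^{[n]}=T^n$ and $(0-T)^{[n]}=(-1)^nT^n$ vanish for $n\ge2$, so $\left\{S_h\right\},\left\{T_h\right\}$ are asymptotically quasinilpotent equivalent; moreover $S_hA_h-A_hS_h=0$, while $TA-AT=\begin{pmatrix}1&0\\0&-1\end{pmatrix}\neq0$, so $\lim_{h\to0}\|T_hA_h-A_hT_h\|\neq0$. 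This exhibits the claimed failure. The only genuinely delicate points are the sharp bound in i) and the reindexing identity in ii); part iii) is purely illustrative.
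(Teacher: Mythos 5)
Your proof is correct, and for parts i) and ii) it is essentially the paper's own proof: part i) rests on the recursion $(T-S)^{[n+1]}=T(T-S)^{[n]}-(T-S)^{[n]}S$ together with the observation that for each fixed $n$ one has $\limsup_{h\to 0}\|(T_h-S_h)^{[n]}\|=0$ (the paper runs the induction directly on this limit statement, while you extract the explicit bound $\|(T-S)^{[n]}\|\le \|T-S\|\,(2\|S\|+\|T-S\|)^{n-1}$; same idea, yours quantitative), and part ii) rests on the same reindexing identity combined with Proposition \ref{p2.4} i). Note that your identity, with the factor $(-1)^n$ multiplying $(S-T)^{[n]}$, is the correct one; the paper omits this sign, harmlessly, since norms are taken immediately afterwards. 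The genuine difference is part iii). The paper argues indirectly: if the implication held for every admissible $\{A_h\}$, then the choice $A_h=S_h$ would force $\lim_{h\to 0}\|S_hT_h-T_hS_h\|=0$ for any asymptotically quasinilpotent equivalent pair, ``fact that is not true'' --- but the paper never exhibits a pair witnessing that failure, so its argument is incomplete as written. You instead produce a concrete counterexample: $S_h\equiv 0$, $T_h\equiv T$ the $2\times 2$ nilpotent Jordan block, $A_h\equiv A$ its transpose; then $(T-0)^{[n]}=T^n=0$ and $(0-T)^{[n]}=(-1)^nT^n=0$ for $n\ge 2$, so the equivalence hypothesis holds, $S_hA_h-A_hS_h=0$ trivially, and $TA-AT=\mathrm{diag}(1,-1)\ne 0$. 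This settles the ``it is not necessary'' claim rigorously and self-containedly, which is exactly what a statement of this form requires, and it also repairs the gap in the paper's own treatment of iii).
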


\begin{proof}
i) We prove that%
\begin{equation*}
{\mathop{\lim }_{h\rightarrow 0}\left\Vert {\left( S_{h}-T_{h}\right) }^{%
\left[ n\right] }\right\Vert \ }={\mathop{\lim }_{h\rightarrow 0}\left\Vert {%
\left( T_{h}-S_{h}\right) }^{\left[ n\right] }\right\Vert \ }=0,
\end{equation*}%
for any $n\in {\mathbb{N}}$.

Since ${\left( T-S\right) }^{\left[ n+1\right] }=T{\left( T-S\right) }^{%
\left[ n\right] }-{\left( T-S\right) }^{\left[ n\right] }S$, for any $n\in {%
\mathbb{N}}$, taking $n=2$, it follows that%
\begin{equation*}
{\mathop{\lim \sup}_{h\rightarrow 0}\left\Vert {\left( T_{h}-S_{h}\right) }^{%
\left[ 2\right] }\right\Vert \ }={\mathop{\lim \sup}_{h\rightarrow 0}\left\Vert
T_{h}\left( T_{h}-S_{h}\right) -\left( T_{h}-S_{h}\right) S_{h}\right\Vert \ 
}\leq 
\end{equation*}%
\begin{equation*}
\leq {\mathop{\lim \sup}_{h\rightarrow 0}\left\Vert T_{h}\left(
T_{h}-S_{h}\right) \right\Vert \ }+{\mathop{\lim \sup}_{h\rightarrow
0}\left\Vert \left( T_{h}-S_{h}\right) S_{h}\right\Vert \ }\leq 
\end{equation*}%
\begin{equation*}
\leq {\mathop{\lim \sup}_{h\rightarrow 0}\left\Vert T_{h}\right\Vert \left\Vert
\left( T_{h}-S_{h}\right) \right\Vert \ }+{\mathop{\lim \sup}_{h\rightarrow
0}\left\Vert \left( T_{h}-S_{h}\right) \right\Vert \ }\left\Vert
S_{h}\right\Vert \leq 0.
\end{equation*}

By induction, we prove that if ${\mathop{\lim }_{h\rightarrow 0}\left\Vert {%
\left( T_{h}-S_{h}\right) }^{\left[ n\right] }\right\Vert \ }=0$, then ${%
\mathop{\lim }_{h\rightarrow 0}\left\Vert {\left( T_{h}-S_{h}\right) }^{%
\left[ n+1\right] }\right\Vert \ }=0$.%
\begin{equation*}
{\mathop{\lim \sup}_{h\rightarrow 0}\left\Vert {\left( T_{h}-S_{h}\right) }^{%
\left[ n+1\right] }\right\Vert \ }={\mathop{\lim \sup}_{h\rightarrow
0}\left\Vert T_{h}{\left( T_{h}-S_{h}\right) }^{\left[ n\right] }-{\left(
T_{h}-S_{h}\right) }^{\left[ n\right] }S_{h}\right\Vert \ }\leq 
\end{equation*}%
\begin{equation*}
\leq {\mathop{\lim \sup}_{h\rightarrow 0}\left\Vert T_{h}{\left(
T_{h}-S_{h}\right) }^{\left[ n\right] }\right\Vert \ }+{\mathop{\lim \sup}%
_{h\rightarrow 0}\left\Vert {\left( T_{h}-S_{h}\right) }^{\left[ n\right]
}S_{h}\right\Vert \ }\leq 
\end{equation*}%
\begin{equation*}
\leq {\mathop{\lim \sup}_{h\rightarrow 0}\left\Vert T_{h}\right\Vert \left\Vert {%
\left( T_{h}-S_{h}\right) }^{\left[ n\right] }\right\Vert \ }+{\mathop{\lim \sup}%
_{h\rightarrow 0}\left\Vert {\left( T_{h}-S_{h}\right) }^{\left[ n\right]
}\right\Vert \ }\left\Vert S_{h}\right\Vert \leq 0\text{.}
\end{equation*}

Similarly we can show that ${\mathop{\lim }_{h\rightarrow 0}\left\Vert {%
\left( S_{h}-T_{h}\right) }^{\left[ n\right] }\right\Vert \ }=0$, for any $%
n\in {\mathbb{N}}$.

When $n\rightarrow \infty $, we obtain%
\begin{equation*}
{\mathop{\lim }_{n\rightarrow \infty }{{\mathop{\lim \sup}_{h\rightarrow
0}\left\Vert {\left( S_{h}-T_{h}\right) }^{\left[ n\right] }\right\Vert \ }}%
^{\frac{1}{n}}\ }={\mathop{\lim }_{n\rightarrow \infty }{{\mathop{\lim \sup}%
_{h\rightarrow 0}\left\Vert {\left( T_{h}-S_{h}\right) }^{\left[ n\right]
}\right\Vert \ }}^{\frac{1}{n}}\ }=0\text{.}
\end{equation*}

ii) We remember that for any two bounded linear operators \textit{T }and%
\textit{\ S}, we have%
\begin{equation*}
{\left( T-S\right) }^{\left[ n\right] }=\sum_{k=0}^{n}{{\left( -1\right) }%
^{n-k}C_{k}^{n}T^{k}S^{n-k}}={\left( S-T\right) }^{\left[ n\right]
}+\sum_{k=0}^{n-1}{{\left( -1\right) }%
^{n-1-k}C_{k}^{n}(T^{k}S^{n-k}-S^{n-k}T^{k})},
\end{equation*}%
where $n\in {\mathbb{N}}$.

Applying above relation to $S_{h}$ \c{s}i $T_{h}$, when $h\rightarrow 0$, we
obtain%
\begin{equation*}
{\mathop{\lim \sup}_{h\rightarrow 0}\left\Vert {\left( T_{h}-S_{h}\right) }^{%
\left[ n\right] }\right\Vert \ }=
\end{equation*}%
\begin{equation*}
={\mathop{\lim \sup}_{h\rightarrow 0}\left\Vert {\left( S_{h}-T_{h}\right) }^{%
\left[ n\right] }-\sum_{k=0}^{n-1}{{\left( -1\right) }^{n-1-k}C_{k}^{n}({%
T_{h}}^{k}{S_{h}}^{n-k}-{S_{h}}^{n-k}{T_{h}}^{k})}\right\Vert \ }\leq 
\end{equation*}%
\begin{equation*}
\leq {\mathop{\lim \sup}_{h\rightarrow 0}\left\Vert {\left( S_{h}-T_{h}\right) }%
^{\left[ n\right] }\right\Vert \ }+{\mathop{\lim \sup}_{h\rightarrow
0}\left\Vert \sum_{k=0}^{n-1}{{\left( -1\right) }^{n-1-k}C_{k}^{n}({T_{h}}%
^{k}{S_{h}}^{n-k}-{S_{h}}^{n-k}{T_{h}}^{k})}\right\Vert \ }\leq 
\end{equation*}%
\begin{equation*}
\leq {\mathop{\lim \sup}_{h\rightarrow 0}\left\Vert {\left( S_{h}-T_{h}\right) }%
^{\left[ n\right] }\right\Vert \ }+\sum_{k=0}^{n-1}{C_{k}^{n}{\mathop{\lim \sup}%
_{h\rightarrow 0}\left\Vert {T_{h}}^{k}{S_{h}}^{n-k}-{S_{h}}^{n-k}{T_{h}}%
^{k}\right\Vert \ }}.
\end{equation*}

By Proposition 8 ii), it follows%
\begin{equation*}
{\mathop{\lim \sup}_{h\rightarrow 0}\left\Vert {\left( T_{h}-S_{h}\right) }^{%
\left[ n\right] }\right\Vert \ }\leq {\mathop{\lim \sup}_{h\rightarrow
0}\left\Vert {\left( S_{h}-T_{h}\right) }^{\left[ n\right] }\right\Vert }%
\text{,}
\end{equation*}

for any $n\in {\mathbb{N}}$.

Analogously we can prove that ${\mathop{\lim \sup}_{h\rightarrow 0}\left\Vert {%
\left( S_{h}-T_{h}\right) }^{\left[ n\right] }\right\Vert \ }\leq {%
\mathop{\lim \sup}_{h\rightarrow 0}\left\Vert {\left( T_{h}-S_{h}\right) }^{%
\left[ n\right] }\right\Vert }$.

iii) We suppose that the relation ${\mathop{\lim }_{h\rightarrow
0}\left\Vert T_{h}A_{h}-A_{h}T_{h}\right\Vert \ }=0$ is true. Then, taking $%
A_{h}=S_{h}$, for any $h\in \left. (0,1\right] $, since 
\begin{equation*}
{\mathop{\lim }_{h\rightarrow 0}\left\Vert {S_{h}}^{2}-{S_{h}}%
^{2}\right\Vert \ }=0\text{,}
\end{equation*}%
it follows 
\begin{equation*}
{\mathop{\lim }_{h\rightarrow 0}\left\Vert S_{h}T_{h}-T_{h}S_{h}\right\Vert
\ }=0\text{,}
\end{equation*}

fact that is not true.
\end{proof}

\noindent

\begin{proposition}
\label{p2.6}Let $\left\{ S_{h}\right\} ,\ \left\{ T_{h}\right\} \ \subset
L(X)$ be two asymptotically quasinilpotent equivalent families and $\left\{
A_{h}\right\} \ \subset L(X)$ a bounded family. Then

\begin{description}
\item[i)] The families $\left\{ S_{h}+A_{h}\right\} ,\ \left\{
T_{h}+A_{h}\right\} $ are asymptotically quasinilpotent equivalent;

\item[ii)] If $\left\{ A_{h}\right\} \ \subset L(X)$ is a bounded family
such that $\mathop{{\rm lim}}_{h\rightarrow 0}\left\Vert S_{h}A_{h}-{A_{h}S}%
_{h}\right\Vert =0$ and $\mathop{{\rm lim}}_{h\rightarrow 0}\left\Vert
T_{h}A_{h}-{A_{h}T}_{h}\right\Vert =0$ {, }the families $\left\{
S_{h}A_{h}\right\} ,\ \left\{ T_{h}A_{h}\right\} $ are asymptotically
quasinilpotent equivalent.
\end{description}
\end{proposition}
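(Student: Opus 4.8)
The plan in both parts is to reduce the bracket of the modified families to the bracket $(S_h-T_h)^{[n]}$ of the original pair and then to pass to $n$-th roots and let $n\to\infty$. Throughout I set $M:=\sup_h\|A_h\|<\infty$ and take $\{S_h\},\{T_h\}$ bounded, as is needed to invoke Proposition \ref{p2.4}.

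For part ii) I would peel the factors $A_h$ off the powers. By Proposition \ref{p2.4} iii) applied to the asymptotically commuting pairs $(S_h,A_h)$ and $(T_h,A_h)$ one has $\|(S_hA_h)^k-S_h^kA_h^k\|\to0$ and $\|(T_hA_h)^{n-k}-T_h^{n-k}A_h^{n-k}\|\to0$ as $h\to0$, and by Proposition \ref{p2.4} i) the block $A_h^k$ may be moved past $T_h^{n-k}$ with vanishing error; hence for each fixed $n$ and $0\le k\le n$,
\[
\lim_{h\to0}\left\|(S_hA_h)^k(T_hA_h)^{n-k}-S_h^kT_h^{n-k}A_h^{n}\right\|=0.
\]
Summing against $(-1)^{n-k}C_n^k$ then gives $(S_hA_h-T_hA_h)^{[n]}=(S_h-T_h)^{[n]}A_h^{n}+E_h^{(n)}$ with $\|E_h^{(n)}\|\to0$ as $h\to0$, so that $\limsup_{h\to0}\|(S_hA_h-T_hA_h)^{[n]}\|\le M^{n}\limsup_{h\to0}\|(S_h-T_h)^{[n]}\|$. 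Taking $n$-th roots,
\[
\Big(\limsup_{h\to0}\|(S_hA_h-T_hA_h)^{[n]}\|\Big)^{1/n}\le M\Big(\limsup_{h\to0}\|(S_h-T_h)^{[n]}\|\Big)^{1/n},
\]
which tends to $0$ as $n\to\infty$; the symmetric estimate for $(T_hA_h-S_hA_h)^{[n]}$ is identical, so ii) follows.

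For part i) the natural attempt is to use $(S_h+A_h)-(T_h+A_h)=S_h-T_h$ and hope that the bracket collapses to $(S_h-T_h)^{[n]}$. The difficulty, which I expect to be the main obstacle, is that $(\cdot)^{[n]}$ is not a function of the difference alone. Expanding and comparing already gives at $n=2$
\[
\Big((S_h+A_h)-(T_h+A_h)\Big)^{[2]}=(S_h-T_h)^{[2]}+\Big((T_h-S_h)A_h-A_h(T_h-S_h)\Big),
\]
and for general $n$ the discrepancy is a sum of terms each carrying a commutator of $A_h$ with a word in $S_h,T_h,A_h$. These corrections are governed by $\|A_hX-XA_h\|$ and need not vanish: asymptotic quasinilpotent equivalence does not force $\|S_h-T_h\|\to0$, so even the $n=2$ correction above can fail to be small.

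In fact I do not believe i) holds without a further hypothesis. For the constant families $S_h\equiv0$ and $T_h\equiv N$ on $\mathbb{C}^2$ with $N=\left(\begin{smallmatrix}0&1\\0&0\end{smallmatrix}\right)$, asymptotic quasinilpotent equivalence reduces to the classical notion and holds since $(0-N)^{[n]}=(N-0)^{[n]}=0$ for $n\ge2$; yet for $A_h\equiv N^{*}$ one gets $Sp(S_h+A_h)=\{0\}$ and $Sp(T_h+A_h)=\{1,-1\}$, so $\{S_h+A_h\}$ and $\{T_h+A_h\}$ are not asymptotically quasinilpotent equivalent by Theorem \ref{t1.4}. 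The remedy is to impose the same asymptotic commuting of $\{A_h\}$ with $\{S_h\}$ and $\{T_h\}$ as in ii): then the commutator corrections vanish as $h\to0$ for each fixed $n$ by the Proposition \ref{p2.4} technique, so that $\limsup_{h\to0}\|((S_h+A_h)-(T_h+A_h))^{[n]}\|=\limsup_{h\to0}\|(S_h-T_h)^{[n]}\|$, and the conclusion again follows on taking $n$-th roots.
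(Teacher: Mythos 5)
Your part ii) is correct and follows the paper's own route: the paper proves, via the commutation results of Proposition \ref{p2.4}, that $\limsup_{h\to 0}\|(T_hA_h-S_hA_h)^{[n]}-(T_h-S_h)^{[n]}A_h^n\|=0$ for each fixed $n$, and then concludes by the same $n$-th root and $n\to\infty$ argument you use; your term-by-term peeling of the factors $A_h$ is just slightly different bookkeeping for the same identity. You were also right to flag that boundedness of $\{S_h\},\{T_h\}$ must be assumed: it is needed to invoke Proposition \ref{p2.4} and is used implicitly in the paper's proof, though it is absent from the statement of Proposition \ref{p2.6}.

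For part i) you have not left a gap; you have found the gap in the paper. The paper's entire proof of i) is the single assertion that vanishing of $\lim_{n}(\limsup_{h\to 0}\|(T_h-S_h)^{[n]}\|)^{1/n}$ yields vanishing of $\lim_{n}(\limsup_{h\to 0}\|((T_h+A_h)-(S_h+A_h))^{[n]}\|)^{1/n}$, i.e., it tacitly uses the identity $((T_h+A_h)-(S_h+A_h))^{[n]}=(T_h-S_h)^{[n]}$. That is exactly the fallacy you isolate: the bracket $(\cdot)^{[n]}$ is a function of the ordered pair, not of the difference, and already at $n=2$ the discrepancy is the commutator term you computed. Your counterexample is valid: for constant families the asymptotic notion coincides with classical quasinilpotent equivalence; $0$ and $N$ are quasinilpotent equivalent because $(0-N)^{[n]}=(-1)^nN^n$ and $(N-0)^{[n]}=N^n$ vanish for $n\ge 2$, whereas $Sp(N^*)=\{0\}$ and $Sp(N+N^*)=\{-1,1\}$, so Theorem \ref{t1.4} shows $\{N^*\}$ and $\{N+N^*\}$ are not quasinilpotent equivalent. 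Hence Proposition \ref{p2.6} i) is false as stated, the paper's proof of it is erroneous, and your repaired version (adding the hypothesis that $\{A_h\}$ asymptotically commutes with $\{S_h\}$ and $\{T_h\}$, under which the commutator corrections vanish as $h\to 0$ for each fixed $n$ and the $n$-th root argument goes through) is the correct statement.
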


\begin{proof}
i) Since $\left\{ S_{h}\right\} ,\ \left\{ T_{h}\right\} $ are asymptotically
quasinilpotent equivalent, i.e.%
\begin{equation*}
{\mathop{\lim }_{n\rightarrow \infty }{{\mathop{\lim \sup}_{h\rightarrow
0}\left\Vert {\left( T_{h}-S_{h}\right) }^{\left[ n\right] }\right\Vert \ }}%
^{\frac{1}{n}}\ }={\mathop{\lim }_{n\rightarrow \infty }{{\mathop{\lim \sup}%
_{h\rightarrow 0}\left\Vert {\left( S_{h}-T_{h}\right) }^{\left[ n\right]
}\right\Vert \ }}^{\frac{1}{n}}\ }=0\text{,}
\end{equation*}%
then 
\begin{eqnarray*}
&&{\mathop{\lim }_{n\rightarrow \infty }{{\mathop{\lim \sup}_{h\rightarrow
0}\left\Vert {\left( \left( T_{h}+A_{h}\right) -(S_{h}+A_{h})\right) }^{%
\left[ n\right] }\right\Vert \ }}^{\frac{1}{n}}\ } \\
&=&{\mathop{\lim }_{n\rightarrow \infty }{{\mathop{\lim \sup}_{h\rightarrow
0}\left\Vert {\left( \left( S_{h}+A_{h}\right) -(T_{h}+A_{h})\right) }^{%
\left[ n\right] }\right\Vert \ }}^{\frac{1}{n}}\ } \\
&=&0\text{,}
\end{eqnarray*}

so $\left\{ S_{h}+A_{h}\right\} ,\ \left\{ T_{h}+A_{h}\right\} $ are
asymptotically quasinilpotent equivalent.

ii) Since $\mathop{{\rm lim}}_{h\rightarrow 0}\left\Vert S_{h}A_{h}-{A_{h}S}%
_{h}\right\Vert =0$ and $\mathop{{\rm lim}}_{h\rightarrow 0}\left\Vert
T_{h}A_{h}-{A_{h}T}_{h}\right\Vert =0$, taking into account Proposition 9, it follows%
\begin{equation*}
\mathop{{\rm \lim \sup}}_{h\rightarrow 0}\left\Vert {\left(
T_{h}A_{h}-S_{h}A_{h}\right) }^{\left[ n\right] }-{\left( T_{h}-S_{h}\right) 
}^{\left[ n\right] }{A_{h}}^{n}\right\Vert =
\end{equation*}%
\begin{equation*}
=\mathop{{\lim \sup}}_{h\rightarrow 0}\left\Vert \sum_{k=0}^{n}{{\left(
-1\right) }^{n-k}C_{n}^{k}({T_{h}A_{h})}^{k}{\left( S_{h}A_{h}\right) }^{n-k}%
}-\sum_{k=0}^{n}{{\left( -1\right) }^{n-k}C_{n}^{k}{T_{h}}^{k}{S_{h}}^{n-k}}{%
A_{h}}^{n}\right\Vert =
\end{equation*}%
\begin{equation*}
=\mathop{{\lim \sup}}_{h\rightarrow 0}\left\Vert \sum_{k=0}^{n}{{\left(
-1\right) }^{n-k}C_{n}^{k}(({T_{h}A_{h})}^{k}{\left( S_{h}A_{h}\right) }%
^{n-k}-{T_{h}}^{k}{S_{h}}^{n-k}{A_{h}}^{k}{A_{h}}^{n-k})}\right\Vert \leq 
\end{equation*}%
\begin{equation*}
\leq \sum_{k=0}^{n}{C_{n}^{k}\ \mathop{{\lim \sup}}_{h\rightarrow 0}\Vert
\left( T_{h}A_{h}\right) ^{k}{\left( S_{h}A_{h}\right) }^{n-k}-{T_{h}}^{k}{{{%
A_{h}}^{k}S}_{h}}^{n-k}{A_{h}}^{n-k}+}
\end{equation*}%
\begin{equation*}
+{T_{h}}^{k}{{{A_{h}}^{k}S}_{h}}^{n-k}{A_{h}}^{n-k}-{T_{h}}^{k}{S_{h}}^{n-k}{%
A_{h}}^{k}{A_{h}}^{n-k}\Vert \leq 
\end{equation*}%
\begin{equation*}
\leq \sum_{k=0}^{n}{C_{n}^{k}\ \mathop{{\lim \sup}}_{h\rightarrow 0}\left\Vert
({T_{h}A_{h})}^{k}{\left( S_{h}A_{h}\right) }^{n-k}-{T_{h}}^{k}{{{A_{h}}^{k}S%
}_{h}}^{n-k}{A_{h}}^{n-k}\right\Vert }+
\end{equation*}%
\begin{equation*}
+\sum_{k=0}^{n}{C_{n}^{k}\ \mathop{{\lim \sup}}_{h\rightarrow 0}\left\Vert {%
T_{h}}^{k}{{{A_{h}}^{k}S}_{h}}^{n-k}{A_{h}}^{n-k}-{T_{h}}^{k}{S_{h}}^{n-k}{%
A_{h}}^{k}{A_{h}}^{n-k}\right\Vert }\leq 
\end{equation*}%
\begin{equation*}
\leq \sum_{k=0}^{n}{C_{n}^{k}\ \mathop{{\lim \sup}}_{h\rightarrow 0}\Vert ({%
T_{h}A_{h}})^{k}{\left( S_{h}A_{h}\right) }^{n-k}-({T_{h}A_{h}})^{k}{S_{h}}%
^{n-k}{A_{h}}^{n-k}+}
\end{equation*}%
\begin{equation*}
+({T_{h}A_{h}})^{k}{S_{h}}^{n-k}{A_{h}}^{n-k}-{T_{h}}^{k}{A_{h}}^{k}{S_{h}}%
^{n-k}{A_{h}}^{n-k}\Vert +
\end{equation*}%
\begin{equation*}
+\sum_{k=0}^{n}{C_{n}^{k}\ \mathop{{\lim \sup}}_{h\rightarrow 0}\left\Vert {%
T_{h}}^{k}\right\Vert \left\Vert {{{A_{h}}^{k}S}_{h}}^{n-k}-{S_{h}}^{n-k}{%
A_{h}}^{k})\right\Vert \left\Vert {A_{h}}^{n-k}\right\Vert }\leq 
\end{equation*}%
\begin{equation*}
\leq \sum_{k=0}^{n}{C_{n}^{k}\ \mathop{{\lim \sup}}_{h\rightarrow 0}\left\Vert
({T_{h}A_{h})}^{k}{\left( S_{h}A_{h}\right) }^{n-k}-({T_{h}A_{h})}^{k}{S_{h}}%
^{n-k}{A_{h}}^{n-k}\right\Vert }+
\end{equation*}%
\begin{equation*}
+\sum_{k=0}^{n}{C_{n}^{k}\ \mathop{{\lim \sup}}_{h\rightarrow 0}\left\Vert ({%
T_{h}A_{h})}^{k}{S_{h}}^{n-k}{A_{h}}^{n-k}-{T_{h}}^{k}{A_{h}}^{k}{S_{h}}%
^{n-k}{A_{h}}^{n-k}\right\Vert }\leq 
\end{equation*}%
\begin{equation*}
\leq \sum_{k=0}^{n}{C_{n}^{k}\ \mathop{{\lim \sup}}_{h\rightarrow 0}\left\Vert
({T_{h}A_{h})}^{k}\right\Vert \left\Vert {\left( S_{h}A_{h}\right) }^{n-k}-{%
S_{h}}^{n-k}{A_{h}}^{n-k}\right\Vert }+
\end{equation*}%
\begin{equation*}
+\sum_{k=0}^{n}{C_{n}^{k}\ \mathop{{\lim \sup}}_{h\rightarrow 0}\left\Vert ({%
T_{h}A_{h})}^{k}-{T_{h}}^{k}{A_{h}}^{k}{S_{h}}^{n-k}{A_{h}}^{n-k}\right\Vert
\left\Vert {S_{h}}^{n-k}\right\Vert \left\Vert {A_{h}}^{n-k}\right\Vert }=0.
\end{equation*}

Having in view that $\left\{ S_{h}\right\} ,\ \left\{ T_{h}\right\} $ are
asymptotically quasinilpotent equivalent and taking into account the above
relation, it results%
\begin{eqnarray*}
&&{\mathop{\lim }_{n\rightarrow \infty }{{\mathop{\lim \sup}_{h\rightarrow
0}\left\Vert {\left( T_{h}A_{h}-S_{h}A_{h}\right) }^{\left[ n\right]
}\right\Vert \ }}^{\frac{1}{n}}} \\
&=&{\mathop{\lim }_{n\rightarrow \infty }{{\mathop{\lim \sup}_{h\rightarrow
0}\left\Vert {(T_{h}-S_{h})}^{\left[ n\right] }{A_{h}}^{n}\right\Vert \ }}^{%
\frac{1}{n}}} \\
&\leq &{\mathop{\lim }_{n\rightarrow \infty }{({\mathop{\lim \sup}_{h\rightarrow
0}\left\Vert {\left( T_{h}-S_{h}\right) }^{\left[ n\right] }\right\Vert
\left\Vert {A_{h}}^{n}\right\Vert \ })}^{\frac{1}{n}}\ } \\
&\leq &{\mathop{\lim }_{n\rightarrow \infty }{{\mathop{\lim }_{h\rightarrow
0}\left\Vert {\left( T_{h}-S_{h}\right) }^{\left[ n\right] }\right\Vert \ }}%
^{\frac{1}{n}}\ }{\mathop{\lim \sup}_{h\rightarrow 0}\left\Vert A_{h}\right\Vert
\ }=0.
\end{eqnarray*}

Analogously we can prove that ${\mathop{\lim }_{n\rightarrow \infty }{{%
\mathop{\lim \sup}_{h\rightarrow 0}\left\Vert {\left(
S_{h}A_{h}-T_{h}A_{h}\right) }^{\left[ n\right] }\right\Vert \ }}^{\frac{1}{n%
}}\ }=0$.
\end{proof}

\section{Spectrum of a family of operators}

\noindent Let be the sets

\noindent 
\[C_b\left(\left.(0,1\right],\ B\left(X\right)\right)=\left\{\left.\varphi :\left.(0,1\right]\to B\left(X\right)\right|\varphi \left(h\right)=T_h\ {\rm such\ that}\ \varphi \ is\ {\rm countinous\ and\ bounded}\right\}=\]
\[=\left\{\left.{\left\{T_h\right\}}_{h\in \left.(0,1\right]}\subset B(X)\right|{\left\{T_h\right\}}_{h\in \left.(0,1\right]}{\rm \ }{\rm is\ a\ bounded\ family}{\rm ,\ i.e.\ }{\mathop{sup}_{h\in \left.(0,1\right]} \left\|T_h\right\|\ }<\infty \right\}.\] 
and
\[C_0\left(\left.(0,1\right],\ B\left(X\right)\right)=\left\{\left.\varphi \in C_b\left(\left.(0,1\right],\ B\left(X\right)\right)\right|{\mathop{\lim }_{h\to 0} \left\|\varphi (h)\right\|=0\ }\right\}=\]
\[=\left\{\left.{\left\{T_h\right\}}_{h\in \left.(0,1\right]}\subset B(X)\right|{\mathop{\lim }_{h\to 0} \left\|T_h\right\|\ }=0\right\}.\]

\noindent $C_b\left(\left.(0,1\right],\ B\left(X\right)\right)\ $ is a Banach algebra non-commutative with  norm

\noindent 
\[\left\|\left\{T_h\right\}\right\|={sup}_{h\in \left.(0,1\right]}\left\|T_h\right\|,\]

\noindent and $C_0\left(\left.(0,1\right],\ B\left(X\right)\right)$ is a closed  bilateral ideal of $C_b\left(\left.(0,1\right],\ B\left(X\right)\right)$. Therefore the quotient algebra $C_b\left(\left.(0,1\right],\ B\left(X\right)\right)/C_0\left(\left.(0,1\right],\ B\left(X\right)\right)$, which will be called from now $B_{\infty }$, is also a Banach algebra with quotient norm

\noindent 
\[\left\|\dot{\left\{T_h\right\}}\right\|={inf}_{{\left\{U_h\right\}}_{h\in \left.(0,1\right]}\in C_0\left(\left.(0,1\right],\ B\left(X\right)\right)}\left\|\left\{T_h\right\}+\left\{U_h\right\}\right\|={inf}_{{\left\{S_h\right\}}_{h\in \left.(0,1\right]}\in \dot{\left\{T_h\right\}}}\left\|\left\{S_h\right\}\right\|.\] 
Then 
\[\left\|\dot{\left\{T_h\right\}}\right\|={inf}_{{\left\{S_h\right\}}_{h\in \left.(0,1\right]}\in \dot{\left\{T_h\right\}}}\left\|\left\{S_h\right\}\right\|\le \left\|\left\{S_h\right\}\right\|={sup}_{h\in \left.(0,1\right]}\left\|S_h\right\|,\]

\noindent for any  ${\left\{S_h\right\}}_{h\in \left.(0,1\right]}\in \dot{\left\{T_h\right\}}$. Moreover,

\noindent 
\[\left\|\dot{\left\{T_h\right\}}\right\|={inf}_{{\left\{S_h\right\}}_{h\in \left.(0,1\right]}\in \dot{\left\{T_h\right\}}}\left\|\left\{S_h\right\}\right\|={inf}_{{\left\{S_h\right\}}_{h\in \left.(0,1\right]}\in \dot{\left\{T_h\right\}}}{sup}_{h\in \left.(0,1\right]}\left\|S_h\right\|.\]

\noindent If two bounded families ${\left\{T_h\right\}}_{h\in \left.(0,1\right]},\ {\left\{S_h\right\}}_{h\in \left.(0,1\right]}\subset B(X)$ are asymptotically equivalent, then $\mathop{{\rm lim}}_{h\to 0}\left\|S_h-T_h\right\|=0$, i.e.  ${\left\{T_h-S_h\right\}}_{h\in \left.(0,1\right]}\in C_0\left(\left.(0,1\right],\ B\left(X\right)\right)$. 

\noindent Let ${\left\{T_h\right\}}_{h\in \left.(0,1\right]},\ {\left\{S_h\right\}}_{h\in \left.(0,1\right]}\in C_b\left(\left.(0,1\right],\ B\left(X\right)\right)$ be asymptotically equivalent. Then
\[{\mathop{{\lim \sup}}_{h\to 0} \left\|S_h\right\|\ }={\mathop{{\lim \sup}}_{h\to 0} \left\|T_h\right\|\ }.\] 
Since
\[{\mathop{{\lim \sup}}_{h\to 0} \left\|S_h\right\|\ }\le {\sup}_{h\in \left.(0,1\right]}\left\|S_h\right\|,\] 
results that

\noindent 
\[{\mathop{{\lim \sup}}_{h\to 0} \left\|S_h\right\|\ }={inf}_{{\left\{S_h\right\}}_{h\in \left.(0,1\right]}\in \dot{\left\{T_h\right\}}}{\mathop{{\lim \sup}}_{h\to 0} \left\|S_h\right\|\ }\le {inf}_{{\left\{S_h\right\}}_{h\in \left.(0,1\right]}\in \dot{\left\{T_h\right\}}}{\sup}_{h\in \left.(0,1\right]}\left\|S_h\right\|=\left\|\dot{\left\{T_h\right\}}\right\|,\]

\noindent for any ${\left\{S_h\right\}}_{h\in \left.(0,1\right]}\in \dot{\left\{T_h\right\}}$.

\noindent In particular
\[{\mathop{{\lim \lim}}_{h\to 0} \left\|T_h\right\|\ }\le \left\|\dot{\left\{T_h\right\}}\right\|\le \left\|\left\{T_h\right\}\right\|={\sup}_{h\in \left.(0,1\right]}\left\|T_h\right\|.\] 

\begin{definition}
\label{d3.1}We call the \textit{resolvent set }of a family of operators $%
\left\{ S_{h}\right\} \ \in C_b\left(\left.(0,1\right],\ B\left(X\right)\right)$ the set\noindent 
\begin{equation*}
r\left( \left\{ S_{h}\right\} \right) =\{\left. \lambda \in {\mathbb{C}}%
\right\vert \exists \left\{ {\mathcal{R}}(\lambda ,S_{h})\right\} \in C_b\left(\left.(0,1\right],\ B\left(X\right)\right),\ {\ \mathop{\lim}_{h\rightarrow 0}\left\Vert \left(
\lambda I-S_{h}\right) {\mathcal{R}}\left( \lambda ,S_{h}\right)
-I\right\Vert \ }=
\end{equation*}%
\begin{equation*}
={\mathop{lim}_{h\rightarrow 0}\left\Vert {\mathcal{R}}\left( \lambda
,S_{h}\right) \left( \lambda I-S_{h}\right) -I\right\Vert \ }=0\}
\end{equation*}

We call the \textit{spectrum }of a family of operators $\left\{
S_{h}\right\} \ \in C_b\left(\left.(0,1\right],\ B\left(X\right)\right) $ the set%
\begin{equation*}
Sp\left( \left\{ S_{h}\right\} \right) \mathrm{=}{\mathbb{C}}\backslash
r\left( \left\{ S_{h}\right\} \right) \text{.}
\end{equation*}
\end{definition}

\begin{remark}
\label{r3.2}

\begin{description}
\item[i)] If $\lambda \in r\left( S_{h}\right) $ for any $h\in (\left. 0,1%
\right] $, then $\lambda \in r\left( \left\{ S_{h}\right\} \right) .$
Therefore $\bigcap_{h\in (\left. 0,1\right] }{r\left( S_{h}\right) }%
\subseteq r\left( \left\{ S_{h}\right\} \right) $;

\item[ii)] If $\lambda \in Sp\left( \left\{ S_{h}\right\} \right) $, then $%
\left\vert \lambda \right\vert \leq {\mathop{\lim \sup }_{h\rightarrow 0}\left\Vert {S_{h}}\right\Vert \ 
}\ $;

\item[iii)] If $\left\Vert S_{h}\right\Vert <\left\vert \lambda \right\vert $
for any $h\in (\left. 0,1\right] $, then $\lambda \in r\left( \left\{
S_{h}\right\} \right) $;

\item[iv)]  $r\left( \left\{ S_{h}\right\} \right) $ is an open set of $C$
and $Sp\left( \left\{
S_{h}\right\} \right) $ is a compact set of $C$.
\end{description}
\end{remark}

\begin{proof}
iv) Let $\lambda \in \ r\left( \left\{ S_{h}\right\} \right) $. From
Definition 11, it follows that there is $\left\{ {\mathcal{R}}%
(\lambda ,S_{h})\right\} \in C_b\left(\left.(0,1\right],\ B\left(X\right)\right)$ such that%
\begin{equation*}
{\mathop{\lim }_{h\rightarrow 0}\left\Vert \left( \lambda I-S_{h}\right) {%
\mathcal{R}}\left( \lambda ,S_{h}\right) -I\right\Vert \ }={\mathop{\lim }%
_{h\rightarrow 0}\left\Vert {\mathcal{R}}\left( \lambda ,S_{h}\right) \left(
\lambda I-S_{h}\right) -I\right\Vert \ }=0.
\end{equation*}

Let $\mu \in \ D(\lambda ,\frac{1}{{\mathop{\lim \sup}_{h\rightarrow
0}\left\Vert {\mathcal{R}}\left( \lambda ,S_{h}\right) \right\Vert \ }})$.
So 
\begin{equation*}
\left\vert \lambda -\mu \right\vert <\frac{1}{{\mathop{\lim \sup}_{h\rightarrow
0}\left\Vert {\mathcal{R}}\left( \lambda ,S_{h}\right) \right\Vert \ }}.
\end{equation*}

According to ii), it follows $1\in r\left( \left\{ (\lambda -\mu ){\mathcal{R%
}}\left( \lambda ,S_{h}\right) \right\} \right) $, therefore there is {$%
\left\{ {\mathcal{R}}(1,\left( \lambda -\mu \right) {\mathcal{R}}\left(
\lambda ,S_{h}\right) )\right\} \in C_b\left(\left.(0,1\right],\ B\left(X\right)\right) $ such that }%
\begin{equation*}
{\mathop{\lim }_{h\rightarrow 0}\left\Vert \left( I-(\lambda -\mu ){\mathcal{%
R}}\left( \lambda ,S_{h}\right) \right) {\mathcal{R}}\left( 1,(\lambda -\mu )%
{\mathcal{R}}\left( \lambda ,S_{h}\right) \right) -I\right\Vert \ }=
\end{equation*}%
\begin{equation*}
={\mathop{\lim }_{h\rightarrow 0}\left\Vert {\mathcal{R}}\left( 1,(\lambda
-\mu ){\mathcal{R}}\left( \lambda ,S_{h}\right) \right) \left( I-(\lambda
-\mu ){\mathcal{R}}\left( \lambda ,S_{h}\right) \right) -I\right\Vert \ }=0.
\end{equation*}

Having in view the above relation, it results%
\begin{equation*}
{\mathop{\lim \sup}_{h\rightarrow 0}\left\Vert \left( \mu I-S_{h}\right) {%
\mathcal{R}}\left( \lambda ,S_{h}\right) {\mathcal{R}}\left( 1,\left(
\lambda -\mu \right) {\mathcal{R}}\left( \lambda ,S_{h}\right) \right)
-I\right\Vert \ }=
\end{equation*}%
\begin{equation*}
=\mathop{\lim \sup}_{h\rightarrow 0}\Vert \left( \lambda I-S_{h}\right) {%
\mathcal{R}}\left( \lambda ,S_{h}\right) {\mathcal{R}}\left( 1,\left(
\lambda -\mu \right) {\mathcal{R}}\left( \lambda ,S_{h}\right) \right) -
\end{equation*}%
\begin{equation*}
-\left( \lambda -\mu \right) {\mathcal{R}}\left( \lambda ,S_{h}\right) {%
\mathcal{R}}\left( 1,\left( \lambda -\mu \right) {\mathcal{R}}\left( \lambda
,S_{h}\right) \right) -I\Vert \ =
\end{equation*}%
\begin{equation*}
=\mathop{\lim \sup}_{h\rightarrow 0}\Vert \left( \left( \lambda I-S_{h}\right) {%
\mathcal{R}}\left( \lambda ,S_{h}\right) -I\right) {\mathcal{R}}\left(
1,\left( \lambda -\mu \right) {\mathcal{R}}\left( \lambda ,S_{h}\right)
\right) +{\mathcal{R}}\left( 1,\left( \lambda -\mu \right) {\mathcal{R}}%
\left( \lambda ,S_{h}\right) \right) -
\end{equation*}%
\begin{equation*}
-\left( \lambda -\mu \right) {\mathcal{R}}\left( \lambda ,S_{h}\right) {%
\mathcal{R}}\left( 1,(\lambda -\mu ){\mathcal{R}}\left( \lambda
,S_{h}\right) \right) -I\Vert \ \leq 
\end{equation*}%
\begin{equation*}
\leq {\mathop{\lim \sup}_{h\rightarrow 0}\left\Vert \left( \left( \lambda
I-S_{h}\right) {\mathcal{R}}\left( \lambda ,S_{h}\right) -I\right) {\mathcal{%
R}}\left( 1,\left( \lambda -\mu \right) {\mathcal{R}}\left( \lambda
,S_{h}\right) \right) \right\Vert \ }+
\end{equation*}%
\begin{equation*}
+\mathop{\lim \sup}_{h\rightarrow 0}\left\Vert {\mathcal{R}}\left( 1,\left(
\lambda -\mu \right) {\mathcal{R}}\left( \lambda ,S_{h}\right) \right)
-\left( \lambda -\mu \right) {\mathcal{R}}\left( \lambda ,S_{h}\right) {%
\mathcal{R}}\left( 1,(\lambda -\mu ){\mathcal{R}}\left( \lambda
,S_{h}\right) \right) -I\right\Vert \ \leq 
\end{equation*}%
\begin{equation*}
\leq {\mathop{\lim \sup}_{h\rightarrow 0}\left\Vert \left( \left( \lambda
I-S_{h}\right) {\mathcal{R}}\left( \lambda ,S_{h}\right) -I\right)
\right\Vert \left\Vert {\mathcal{R}}\left( 1,\left( \lambda -\mu \right) {%
\mathcal{R}}\left( \lambda ,S_{h}\right) \right) \right\Vert \ }+
\end{equation*}%
\begin{equation*}
+{\mathop{\lim }_{h\rightarrow 0}\left\Vert (I-\left( \lambda -\mu \right) {%
\mathcal{R}}\left( \lambda ,S_{h}\right) ){\mathcal{R}}\left( 1,(\lambda
-\mu ){\mathcal{R}}\left( \lambda ,S_{h}\right) \right) -I\right\Vert \ }=0,
\end{equation*}

so $\mu \in \ r\left( \left\{ S_{h}\right\} \right) $, for every $\mu \in \
D(\lambda ,\frac{1}{{\mathop{\lim \sup}_{h\rightarrow 0}\left\Vert {\mathcal{R}}%
\left( \lambda ,S_{h}\right) \right\Vert \ }})$. Therefore, for any $\lambda
\in \ r\left( \left\{ S_{h}\right\} \right) $, there is an open disk $%
D(\lambda ,\frac{1}{{\mathop{\lim \sup}_{h\rightarrow 0}\left\Vert {\mathcal{R}}%
\left( \lambda ,S_{h}\right) \right\Vert \ }})$ such that $D(\lambda ,\frac{1%
}{{\mathop{\lim \sup}_{h\rightarrow 0}\left\Vert {\mathcal{R}}\left( \lambda
,S_{h}\right) \right\Vert \ }})\subset \ r\left( \left\{ S_{h}\right\}
\right) $.

If $\left\{ S_{h}\right\} $ is a bounded family, from ii) we have%
\begin{equation*}
\left\vert \lambda \right\vert \leq {\mathop{\lim \sup }_{h\rightarrow
0}\left\Vert S_{h}\right\Vert \ }<\infty \text{,}
\end{equation*}

for any $\lambda \in \ Sp\left( \left\{ S_{h}\right\} \right) $, so $%
Sp\left( \left\{ S_{h}\right\} \right) $ is a compact set.
\end{proof}

\begin{proposition}
\label{d3.3} Let $\left\{ S_{h}\right\} \ \in C_b\left(\left.(0,1\right],\ B\left(X\right)\right) $ be
a family of operators and $\lambda \in r\left( \left\{ S_{h}\right\} \right) 
$. Then, for any $\left\{ {\mathcal{R}}\left( \lambda ,S_{h}\right) \right\}
\in C_b\left(\left.(0,1\right],\ B\left(X\right)\right) \ $ such that 
\[{\ \mathop{lim}_{h\rightarrow
0}\left\Vert \left( \lambda I-S_{h}\right) {\mathcal{R}}\left( \lambda
,S_{h}\right) -I\right\Vert \ }={\mathop{\lim}_{h\rightarrow 0}\left\Vert {%
\mathcal{R}}\left( \lambda ,S_{h}\right) \left( \lambda I-S_{h}\right)
-I\right\Vert \ }=0,\] we have 
\begin{equation*}
{\mathop{\lim}_{h\rightarrow 0}\left\Vert S_{h}{\mathcal{R}}\left( \lambda
,S_{h}\right) -{{\mathcal{R}}\left( \lambda ,S_{h}\right) S}_{h}\right\Vert
\ }=0.
\end{equation*}
\end{proposition}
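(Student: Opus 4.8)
The plan is to reduce the commutator $S_h \mathcal{R}(\lambda, S_h) - \mathcal{R}(\lambda, S_h) S_h$ to the two residual quantities that are assumed to vanish in the limit by hypothesis, namely $(\lambda I - S_h)\mathcal{R}(\lambda, S_h) - I$ and $\mathcal{R}(\lambda, S_h)(\lambda I - S_h) - I$. The key is the purely algebraic observation that $S_h$ and $\lambda I - S_h$ differ only by the scalar multiple $\lambda I$, which commutes with everything; this is the standard resolvent trick transported to the asymptotic setting.

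First I would write $S_h = \lambda I - (\lambda I - S_h)$ and substitute into both products $S_h \mathcal{R}(\lambda, S_h)$ and $\mathcal{R}(\lambda, S_h) S_h$. Since the scalar term $\lambda \mathcal{R}(\lambda, S_h)$ occurs in each and commutes past $\mathcal{R}(\lambda, S_h)$, it cancels in the difference, yielding the identity
\[
S_h \mathcal{R}(\lambda, S_h) - \mathcal{R}(\lambda, S_h) S_h = \mathcal{R}(\lambda, S_h)(\lambda I - S_h) - (\lambda I - S_h)\mathcal{R}(\lambda, S_h).
\]
Next I would insert and subtract the identity operator $I$ on the right-hand side, rewriting it as
\[
\left[\mathcal{R}(\lambda, S_h)(\lambda I - S_h) - I\right] - \left[(\lambda I - S_h)\mathcal{R}(\lambda, S_h) - I\right].
\]

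Finally, applying the triangle inequality and passing to $\mathop{\lim}_{h\to 0}$, each of the two bracketed terms tends to $0$ by the defining property of $\lambda \in r(\{S_h\})$ recorded in Definition \ref{d3.1}, so the norm of the whole expression tends to $0$, which is exactly the desired conclusion.

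I do not expect any genuine analytic obstacle here: the only step requiring insight is spotting the algebraic rewriting that turns a commutator into the difference of the two quantities supplied by the definition of the resolvent set. In particular, no estimate on $\left\Vert \mathcal{R}(\lambda, S_h)\right\Vert$ beyond its membership in $C_b\left(\left.(0,1\right],\ B\left(X\right)\right)$ is needed for this particular statement, since the two limits are used directly.
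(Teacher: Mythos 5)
Your proposal is correct and follows essentially the same route as the paper's own proof: the paper likewise rewrites the commutator $S_{h}{\mathcal{R}}\left( \lambda ,S_{h}\right) -{\mathcal{R}}\left( \lambda ,S_{h}\right) S_{h}$ as ${\mathcal{R}}\left( \lambda ,S_{h}\right) \left( \lambda I-S_{h}\right) -\left( \lambda I-S_{h}\right) {\mathcal{R}}\left( \lambda ,S_{h}\right)$, inserts $-I+I$, and concludes by the triangle inequality from the two defining limits. Your closing observation that no bound on $\left\Vert {\mathcal{R}}\left( \lambda ,S_{h}\right) \right\Vert$ is needed is also consistent with the paper's argument.
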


\begin{proof}
\noindent Let $\lambda \in r\left(\left\{S_h\right\}\right)$ and $\left\{{%
\mathcal{R}}\left(\lambda ,S_h\right)\right\} \in C_b\left(\left.(0,1\right],\ B\left(X\right)\right) $ such
that

\noindent 
\begin{equation*}
{\ \mathop{\lim }_{h\to 0} \left\|\left(\lambda I-S_h\right){\mathcal{R}}%
\left(\lambda ,S_h\right)-I\right\|\ }={\mathop{\lim }_{h\to 0} \left\|{%
\mathcal{R}}\left(\lambda ,S_h\right)\left(\lambda I-S_h\right)-I\right\|\ }%
=0. 
\end{equation*}

\noindent Using this relation we have

\noindent 
\begin{equation*}
{\mathop{\lim \sup}_{h\to 0} \left\|S_h{\mathcal{R}}\left(\lambda ,S_h\right)-{{%
\mathcal{R}}\left(\lambda ,S_h\right)S}_h\right\|\ }={\mathop{\lim \sup}_{h\to
0} \left\|{\mathcal{R}}\left(\lambda ,S_h\right)\left(\lambda
I-S_h\right)-\left(\lambda I-S_h\right){\mathcal{R}}\left(\lambda
,S_h\right)\right\|\ }=
\end{equation*}
\begin{equation*}
={\mathop{\lim \sup}_{h\to 0} \left\|{\mathcal{R}}\left(\lambda
,S_h\right)\left(\lambda I-S_h\right)-I+I-\left(\lambda I-S_h\right){%
\mathcal{R}}\left(\lambda ,S_h\right)\right\|\ }\le
\end{equation*}
\begin{equation*}
\le {\mathop{\lim }_{h\to 0} \left\|\left(\lambda I-S_h\right){\mathcal{R}}%
\left(\lambda ,S_h\right)-I\right\|\ }+{\mathop{\lim }_{h\to 0} \left\|{%
\mathcal{R}}\left(\lambda ,S_h\right)\left(\lambda I-S_h\right)-I\right\|\ }%
=0.
\end{equation*}
\end{proof}

\begin{proposition} 
\label{d3.4} (resolvent equation - asymptotic) Let $%
\left\{S_h\right\}\ \in C_b\left(\left.(0,1\right],\ B\left(X\right)\right)$ be a bounded family and $\lambda ,\mu \in
r(\left\{S_h\right\})$. Then

\noindent 
\begin{equation*}
{\mathop{\lim}_{h\to 0} \left\|{\mathcal{R}}\left(\lambda ,S_h\right)-{%
\mathcal{R}}\left(\mu ,S_h\right)-\left(\mu -\lambda \right){\mathcal{R}}%
\left(\lambda ,S_h\right){\mathcal{R}}\left(\mu ,S_h\right)\right\|\ }=0.
\end{equation*}
\end{proposition}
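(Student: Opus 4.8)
The plan is to mimic the classical proof of the resolvent equation, replacing the exact identities $(\lambda I-T)R(\lambda,T)=R(\lambda,T)(\lambda I-T)=I$ by the approximate ones furnished by membership in $r(\{S_h\})$, and then absorbing the resulting error terms using the boundedness of the resolvent families. The single algebraic fact that drives everything is that $(\mu I-S_h)-(\lambda I-S_h)=(\mu-\lambda)I$ for every $h$, exactly as in the scalar-shift computation of the classical case.

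First I would fix families $\{\mathcal{R}(\lambda,S_h)\},\{\mathcal{R}(\mu,S_h)\}\in C_b((0,1],B(X))$ realizing $\lambda,\mu\in r(\{S_h\})$, so that the four quantities $\|(\lambda I-S_h)\mathcal{R}(\lambda,S_h)-I\|$, $\|\mathcal{R}(\lambda,S_h)(\lambda I-S_h)-I\|$, and their $\mu$-analogues tend to $0$ as $h\to0$. The core step is to verify, for each fixed $h$, the exact operator identity
\[
\mathcal{R}(\lambda,S_h)-\mathcal{R}(\mu,S_h)-(\mu-\lambda)\mathcal{R}(\lambda,S_h)\mathcal{R}(\mu,S_h)=\mathcal{R}(\lambda,S_h)\big(I-(\mu I-S_h)\mathcal{R}(\mu,S_h)\big)+\big(\mathcal{R}(\lambda,S_h)(\lambda I-S_h)-I\big)\mathcal{R}(\mu,S_h),
\]
which is checked by expanding the right-hand side: the two inner products recombine into $-\mathcal{R}(\lambda,S_h)[(\mu I-S_h)-(\lambda I-S_h)]\mathcal{R}(\mu,S_h)=-(\mu-\lambda)\mathcal{R}(\lambda,S_h)\mathcal{R}(\mu,S_h)$, while the leftover $\mathcal{R}(\lambda,S_h)$ and $-\mathcal{R}(\mu,S_h)$ survive. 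Notice that this decomposition uses the right approximate inverse at $\mu$ in the first summand and the left approximate inverse at $\lambda$ in the second, so both halves of Definition \ref{d3.1} are genuinely needed.

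Taking norms and using submultiplicativity gives the bound
\[
\|\mathcal{R}(\lambda,S_h)-\mathcal{R}(\mu,S_h)-(\mu-\lambda)\mathcal{R}(\lambda,S_h)\mathcal{R}(\mu,S_h)\|\le \|\mathcal{R}(\lambda,S_h)\|\,\|(\mu I-S_h)\mathcal{R}(\mu,S_h)-I\|+\|\mathcal{R}(\lambda,S_h)(\lambda I-S_h)-I\|\,\|\mathcal{R}(\mu,S_h)\|.
\]
Since $\{\mathcal{R}(\lambda,S_h)\}$ and $\{\mathcal{R}(\mu,S_h)\}$ lie in $C_b((0,1],B(X))$, both are bounded families, so $\sup_h\|\mathcal{R}(\lambda,S_h)\|$ and $\sup_h\|\mathcal{R}(\mu,S_h)\|$ are finite, while the two error factors tend to $0$ by hypothesis. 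Hence each product on the right is a bounded quantity times a vanishing one, so it tends to $0$; letting $h\to0$ forces the nonnegative left-hand side to $0$, which is the claim.

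There is no genuinely hard step here: the argument is the classical resolvent identity with error bookkeeping. The one point that must not be glossed over is that the boundedness of the two resolvent families — guaranteed precisely because Definition \ref{d3.1} requires them to belong to $C_b((0,1],B(X))$, not merely to be pointwise approximate inverses — is exactly what converts each ``bounded $\times$ vanishing'' product into something that vanishes; without that membership the error terms could fail to die out.
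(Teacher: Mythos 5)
Your proof is correct and is essentially identical to the paper's: the paper uses the very same decomposition $\mathcal{R}(\lambda,S_h)\bigl(I-(\mu I-S_h)\mathcal{R}(\mu,S_h)\bigr)-\bigl(I-\mathcal{R}(\lambda,S_h)(\lambda I-S_h)\bigr)\mathcal{R}(\mu,S_h)$ (your second summand is just this with the sign absorbed), followed by the triangle inequality, submultiplicativity, and the boundedness of the two resolvent families to kill the error terms. No differences worth noting.
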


\begin{proof}
\noindent Since $\left\{{\mathcal{R}}%
(\lambda ,S_h)\right\}$ and $\left\{{\mathcal{R}}(\lambda ,S_h)\right\}$ are
bounded, we have

\noindent 
\begin{equation*}
{\mathop{\lim \sup}_{h\to 0} \left\|{\mathcal{R}}\left(\lambda ,S_h\right)-{%
\mathcal{R}}\left(\mu ,S_h\right)-\left(\mu -\lambda \right){\mathcal{R}}%
\left(\lambda ,S_h\right)R\left(\mu ,S_h\right)\right\|\ }=
\end{equation*}
\begin{equation*}
={\mathop{\lim \sup}_{h\to 0} \left\|{\mathcal{R}}\left(\lambda
,S_h\right)(I-\mu {\mathcal{R}}\left(\mu ,S_h\right))-\left(I-\lambda {%
\mathcal{R}}\left(\lambda ,S_h\right)\right){\mathcal{R}}\left(\mu
,S_h\right)\right\|\ }=
\end{equation*}
\begin{equation*}
={\mathop{\lim \sup}_{h\to 0} \left\|{\mathcal{R}}\left(\lambda
,S_h\right)(I-(\mu I-S_h){\mathcal{R}}\left(\mu ,S_h\right))-\left(I-{%
\mathcal{R}}\left(\lambda ,S_h\right)(\lambda I-S_h)\right){\mathcal{R}}%
\left(\mu ,S_h\right)\right\|\ }\le
\end{equation*}
\begin{equation*}
\le {\mathop{\lim \sup}_{h\to 0} \left\|{\mathcal{R}}\left(\lambda
,S_h\right)(I-(\mu I-S_h){\mathcal{R}}\left(\mu ,S_h\right))\right\|+\ }{%
\mathop{\lim \sup}_{h\to 0} \left\|\left(I-{\mathcal{R}}\left(\lambda
,S_h\right)(\lambda I-S_h)\right){\mathcal{R}}\left(\mu ,S_h\right)\right\|\ 
}\le
\end{equation*}
\begin{equation*}
\le {\mathop{\lim \sup}_{h\to 0} \left\|{\mathcal{R}}\left(\lambda
,S_h\right)\right\|\left\|I-(\mu I-S_h){\mathcal{R}}\left(\mu
,S_h\right)\right\|+}
\end{equation*}
\begin{equation*}
+{\mathop{\lim \sup}_{h\to 0} \left\|I-{\mathcal{R}}\left(\lambda
,S_h\right)(\lambda I-S_h)\right\|\left\|{\mathcal{R}}\left(\mu
,S_h\right)\right\|\ }\le 0.
\end{equation*}
\end{proof}

\begin{corollary}
\label{d3.5} Let $\left\{S_h\right\}\in C_b\left(\left.(0,1\right],\ B\left(X\right)\right)$ be a bounded
family and $\lambda ,\mu \in r(\left\{S_h\right\})$ be not-equal. Then

\noindent 
\begin{equation*}
{\mathop{\lim}_{h\to 0} \left\|{\mathcal{R}}\left(\lambda ,S_h\right){%
\mathcal{R}}\left(\mu ,S_h\right)-{\mathcal{R}}\left(\mu ,S_h\right){%
\mathcal{R}}\left(\lambda ,S_h\right)\right\|\ }=0.
\end{equation*}
\end{corollary}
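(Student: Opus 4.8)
The plan is to read off the asymptotic commutation directly from the asymptotic resolvent equation of Proposition~\ref{d3.4}, applied twice with the roles of $\lambda$ and $\mu$ interchanged. To keep the notation light I would write $R_\lambda$ for $\mathcal{R}(\lambda ,S_h)$ and $R_\mu$ for $\mathcal{R}(\mu ,S_h)$, bearing in mind the suppressed dependence on $h$.

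First I would apply Proposition~\ref{d3.4} to the ordered pair $(\lambda ,\mu)$, obtaining
\[
{\mathop{\lim}_{h\to 0}\left\Vert R_\lambda -R_\mu -(\mu -\lambda )R_\lambda R_\mu \right\Vert \ }=0,
\]
and then apply it again to the pair $(\mu ,\lambda )$, which gives
\[
{\mathop{\lim}_{h\to 0}\left\Vert R_\mu -R_\lambda -(\lambda -\mu )R_\mu R_\lambda \right\Vert \ }=0.
\]
Both applications are legitimate because $\{S_h\}$ is a bounded family and $\lambda ,\mu \in r(\{S_h\})$, so each resolvent family lies in $C_b\left(\left.(0,1\right],\ B\left(X\right)\right)$ and the hypotheses of Proposition~\ref{d3.4} hold verbatim.

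The key observation is that adding the two bracketed expressions cancels the difference $R_\lambda -R_\mu$ and leaves exactly a scalar multiple of the commutator, since
\[
\bigl(R_\lambda -R_\mu -(\mu -\lambda )R_\lambda R_\mu \bigr)+\bigl(R_\mu -R_\lambda -(\lambda -\mu )R_\mu R_\lambda \bigr)=(\mu -\lambda )\bigl(R_\mu R_\lambda -R_\lambda R_\mu \bigr).
\]
By the triangle inequality the norm of the left-hand side is at most the sum of the two norms displayed above, each of which tends to $0$ as $h\to 0$; hence
\[
{\mathop{\lim}_{h\to 0}\left\Vert (\mu -\lambda )\bigl(R_\mu R_\lambda -R_\lambda R_\mu \bigr)\right\Vert \ }=0.
\]

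Finally, since $\lambda \neq \mu$ the scalar $|\mu -\lambda |$ is strictly positive and may be pulled out of the norm and divided away, yielding ${\mathop{\lim}_{h\to 0}\left\Vert R_\lambda R_\mu -R_\mu R_\lambda \right\Vert \ }=0$, which is the assertion. There is essentially no hard step here: the argument is purely algebraic once the resolvent equation is available, and the only points requiring a word are the boundedness of $\{S_h\}$ (needed to invoke Proposition~\ref{d3.4}) and the nondegeneracy hypothesis $\lambda \neq \mu$, which is precisely what permits the concluding division by $\mu -\lambda$.
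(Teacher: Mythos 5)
Your proposal is correct and is essentially the paper's own proof: the paper likewise starts from the identity $(\lambda-\mu)\bigl(\mathcal{R}(\lambda,S_h)\mathcal{R}(\mu,S_h)-\mathcal{R}(\mu,S_h)\mathcal{R}(\lambda,S_h)\bigr)=\bigl[\mathcal{R}(\lambda,S_h)-\mathcal{R}(\mu,S_h)-(\mu-\lambda)\mathcal{R}(\lambda,S_h)\mathcal{R}(\mu,S_h)\bigr]+\bigl[\mathcal{R}(\mu,S_h)-\mathcal{R}(\lambda,S_h)-(\lambda-\mu)\mathcal{R}(\mu,S_h)\mathcal{R}(\lambda,S_h)\bigr]$, applies Proposition~\ref{d3.4} to both orderings of $(\lambda,\mu)$, uses the triangle inequality, and divides by $\left|\lambda-\mu\right|\neq 0$. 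The only difference is presentational: you add the two resolvent-equation expressions and then divide, while the paper starts from the commutator and factors out $\frac{1}{\left|\lambda-\mu\right|}$ first.
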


\begin{proof}
\noindent 
\begin{equation*}
{\mathop{\lim \sup}_{h\to 0} \left\|{\mathcal{R}}\left(\lambda ,S_h\right){%
\mathcal{R}}\left(\mu ,S_h\right)-{\mathcal{R}}\left(\mu ,S_h\right){%
\mathcal{R}}\left(\lambda ,S_h\right)\right\|\ }=
\end{equation*}
\begin{equation*}
=\frac{1}{\left|\lambda -\mu \right|}{\mathop{\lim \sup}_{h\to 0}
\left\|\left(\lambda -\mu \right){\mathcal{R}}\left(\lambda ,S_h\right){%
\mathcal{R}}\left(\mu ,S_h\right)+(\mu -\lambda ){\mathcal{R}}\left(\mu
,S_h\right){\mathcal{R}}\left(\lambda ,S_h\right)\right\|\ }=
\end{equation*}
\begin{equation*}
=\frac{1}{\left|\lambda -\mu \right|}{\mathop{\lim \sup}_{h\to 0} \|\left[{%
\mathcal{R}}\left(\lambda ,S_h\right)-{\mathcal{R}}\left(\mu
,S_h\right)-\left(\mu -\lambda \right){\mathcal{R}}\left(\lambda ,S_h\right){%
\mathcal{R}}\left(\mu ,S_h\right)\right]+}
\end{equation*}
\begin{equation*}
{+\left[{\mathcal{R}}\left(\mu ,S_h\right)-{\mathcal{R}}\left(\lambda
,S_h\right)-(\lambda -\mu ){\mathcal{R}}\left(\mu ,S_h\right){\mathcal{R}}%
\left(\lambda ,S_h\right)\right]\|\ }\le
\end{equation*}
\begin{equation*}
\le \frac{1}{\left|\lambda -\mu \right|}{\mathop{\lim }_{h\to 0} \left\|%
\left[{\mathcal{R}}\left(\lambda ,S_h\right)-{\mathcal{R}}\left(\mu
,S_h\right)-\left(\mu -\lambda \right){\mathcal{R}}\left(\lambda ,S_h\right){%
\mathcal{R}}\left(\mu ,S_h\right)\right]\right\|\ }+
\end{equation*}
\begin{equation*}
+\frac{1}{\left|\lambda -\mu \right|}{\mathop{\lim }_{h\to 0} \left\|\left[{%
\mathcal{R}}\left(\mu ,S_h\right)-{\mathcal{R}}\left(\lambda
,S_h\right)-(\lambda -\mu ){\mathcal{R}}\left(\mu ,S_h\right){\mathcal{R}}%
\left(\lambda ,S_h\right)\right]\right\|\ }=0.
\end{equation*}
\end{proof}

\begin{proposition}
\label{d3.6} Let $\left\{S_h\right\}\ \in C_b\left(\left.(0,1\right],\ B\left(X\right)\right)$ be a
bounded family. If $\lambda \in r\left(\left\{S_h\right\}\right)$ and $%
\left\{{{\mathcal{R}}}_i\left(\lambda ,S_h\right)\right\}\in C_b\left(\left.(0,1\right],\ B\left(X\right)\right),\ i=\overline{1,2}$ such that

\noindent 
\begin{equation*}
{\ \mathop{\lim}_{h\to 0} \left\|\left(\lambda I-S_h\right){{\mathcal{R}}}%
_i\left(\lambda ,S_h\right)-I\right\|\ }={\mathop{\lim}_{h\to 0} \left\|{{%
\mathcal{R}}}_i\left(\lambda ,S_h\right)\left(\lambda
I-S_h\right)-I\right\|\ }=0
\end{equation*}

\noindent for $i=\overline{1,2}$, then 
\begin{equation*}
{\mathop{\lim}_{h\to 0} \left\|{{\mathcal{R}}}_1\left(\lambda ,S_h\right)-{{%
\mathcal{R}}}_2\left(\lambda ,S_h\right)\right\|\ }=0.
\end{equation*}
\end{proposition}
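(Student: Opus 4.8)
The plan is to reproduce the classical uniqueness-of-resolvent argument, in which one writes $R_2 = R_2(\lambda I - T)R_1 = R_1$ by associativity, and then to absorb the now-inexact middle cancellations into error terms that are controlled by the boundedness of the two families. The whole point of the definition of $r(\{S_h\})$ is that a resolvent family need not be unique, but any two candidates must agree in the asymptotic ($h\to 0$) sense, and that is exactly what the identity above becomes after passing to the limit.

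First I would record the boundedness input. By hypothesis $\left\{{\mathcal{R}}_1(\lambda,S_h)\right\},\ \left\{{\mathcal{R}}_2(\lambda,S_h)\right\}\in C_b\left(\left.(0,1\right],\ B\left(X\right)\right)$, so each family is uniformly bounded in $h$; in particular ${\mathop{\lim \sup}_{h\to 0}\left\Vert {\mathcal{R}}_i(\lambda,S_h)\right\Vert }<\infty$ for $i=\overline{1,2}$. These finite suprema are what will multiply the vanishing error terms, so it is essential that both candidate resolvents are genuine elements of $C_b$ and not merely pointwise-defined operators.

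Next I would use the exact algebraic identity (suppressing the arguments $(\lambda,S_h)$ for readability)
\[
{\mathcal{R}}_1-{\mathcal{R}}_2=\left(I-{\mathcal{R}}_2\left(\lambda I-S_h\right)\right){\mathcal{R}}_1+{\mathcal{R}}_2\left(\left(\lambda I-S_h\right){\mathcal{R}}_1-I\right),
\]
which one checks by expanding both products: the two copies of ${\mathcal{R}}_2\left(\lambda I-S_h\right){\mathcal{R}}_1$ cancel, leaving ${\mathcal{R}}_1-{\mathcal{R}}_2$. Taking norms, applying the triangle inequality together with submultiplicativity, and then passing to $\limsup_{h\to 0}$, I obtain
\[
{\mathop{\lim \sup}_{h\to 0}\left\Vert {\mathcal{R}}_1-{\mathcal{R}}_2\right\Vert }\le {\mathop{\lim \sup}_{h\to 0}\left\Vert {\mathcal{R}}_2\left(\lambda I-S_h\right)-I\right\Vert }\cdot {\mathop{\lim \sup}_{h\to 0}\left\Vert {\mathcal{R}}_1\right\Vert }+{\mathop{\lim \sup}_{h\to 0}\left\Vert {\mathcal{R}}_2\right\Vert }\cdot {\mathop{\lim \sup}_{h\to 0}\left\Vert \left(\lambda I-S_h\right){\mathcal{R}}_1-I\right\Vert }.
\]
By the defining hypothesis for ${\mathcal{R}}_2$ the first error factor tends to $0$, and by the defining hypothesis for ${\mathcal{R}}_1$ the second error factor tends to $0$, while the two ${\mathcal{R}}_i$ factors are finite by the boundedness noted above. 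Hence the right-hand side is $0$, giving ${\mathop{\lim }_{h\to 0}\left\Vert {\mathcal{R}}_1(\lambda,S_h)-{\mathcal{R}}_2(\lambda,S_h)\right\Vert }=0$.

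The only genuine decision in the proof is the pairing in the splitting: I must match the small factor $\left(\lambda I-S_h\right){\mathcal{R}}_1-I$ (small because of the ${\mathcal{R}}_1$ hypothesis) with the bounded left factor ${\mathcal{R}}_2$, and the small factor $I-{\mathcal{R}}_2\left(\lambda I-S_h\right)$ (small because of the ${\mathcal{R}}_2$ hypothesis) with the bounded right factor ${\mathcal{R}}_1$. With the correct pairing there is no remaining obstacle; the estimate is a direct asymptotic rendering of the classical computation $R_2=R_2\left(\lambda I-T\right)R_1=R_1$, and the boundedness of the families supplied by $C_b\left(\left.(0,1\right],\ B\left(X\right)\right)$ is precisely what keeps the products finite when the two approximate identities are inserted.
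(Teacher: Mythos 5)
Your proof is correct and takes essentially the same approach as the paper: both arguments split $\mathcal{R}_1-\mathcal{R}_2$ by inserting a middle product of the form $\mathcal{R}_i\left(\lambda I-S_h\right)\mathcal{R}_j$ so that each of the two resulting terms is a product of one vanishing factor (an approximate-inverse defect) and one factor bounded because the families lie in $C_b\left(\left.(0,1\right],\ B\left(X\right)\right)$, then apply the triangle inequality, submultiplicativity, and $\limsup_{h\to 0}$. The only difference is cosmetic: the paper inserts $\mathcal{R}_1\left(\lambda I-S_h\right)\mathcal{R}_2$ (using the right-inverse condition on $\mathcal{R}_2$ and the left-inverse condition on $\mathcal{R}_1$), while you insert $\mathcal{R}_2\left(\lambda I-S_h\right)\mathcal{R}_1$, the mirror image of the same cancellation.
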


\begin{proof}
\noindent Let $\lambda \in r\left(\left\{S_h\right\}\right)$ and $\left\{{{%
\mathcal{R}}}_i\left(\lambda ,S_h\right)\right\}\in C_b\left(\left.(0,1\right],\ B\left(X\right)\right),\ i=%
\overline{1,2},$ such that

\noindent 
\begin{equation*}
{\ \mathop{\lim }_{h\to 0} \left\|\left(\lambda I-S_h\right){{\mathcal{R}}}%
_i\left(\lambda ,S_h\right)-I\right\|\ }={\mathop{\lim }_{h\to 0} \left\|{{%
\mathcal{R}}}_i\left(\lambda ,S_h\right)\left(\lambda
I-S_h\right)-I\right\|\ }=0
\end{equation*}
Therefore

\noindent 
\begin{equation*}
{\mathop{\lim \sup}_{h\to 0} \left\|{{\mathcal{R}}}_1\left(\lambda ,S_h\right)-{{%
\mathcal{R}}}_2\left(\lambda ,S_h\right)\right\|\ }=
\end{equation*}
\begin{equation*}
={\mathop{\lim \sup}_{h\to 0} \left\|{{\mathcal{R}}}_1\left(\lambda ,S_h\right)-{%
{\mathcal{R}}}_2\left(\lambda ,S_h\right)-\left(\lambda -\lambda \right){{%
\mathcal{R}}}_1\left(\lambda ,S_h\right){{\mathcal{R}}}_2\left(\lambda
,S_h\right)\right\|\ }=
\end{equation*}
\begin{equation*}
={\mathop{\lim \sup}_{h\to 0} \left\|{{\mathcal{R}}}_1\left(\lambda
,S_h\right)\left(I-\lambda {{\mathcal{R}}}_2\left(\lambda
,S_h\right)\right)-\left(I-\lambda {{\mathcal{R}}}_1\left(\lambda
,S_h\right)\right){{\mathcal{R}}}_2\left(\lambda ,S_h\right)\right\|\ }=
\end{equation*}
\begin{equation*}
={\mathop{\lim \sup}_{h\to 0} \left\|{{\mathcal{R}}}_1\left(\lambda
,S_h\right)\left(I-\left(\lambda I-S_h\right){{\mathcal{R}}}_2\left(\lambda
,S_h\right)\right)-\left(I-{{\mathcal{R}}}_1\left(\lambda
,S_h\right)\left(\lambda I-S_h\right)\right){{\mathcal{R}}}_2\left(\lambda
,S_h\right)\right\|\ }\le
\end{equation*}
\begin{equation*}
\le {\mathop{\lim \sup}_{h\to 0} \left\|{{\mathcal{R}}}_1\left(\lambda
,S_h\right)\left(I-\left(\lambda I-S_h\right){{\mathcal{R}}}_2\left(\lambda
,S_h\right)\right)\right\|+\ }
\end{equation*}
\begin{equation*}
+{\mathop{\lim \sup}_{h\to 0} \left\|\left(I-{{\mathcal{R}}}_1\left(\lambda
,S_h\right)\left(\lambda I-S_h\right)\right){{\mathcal{R}}}_2\left(\lambda
,S_h\right)\right\|\ }\le
\end{equation*}
\begin{equation*}
\le {\mathop{\lim \sup}_{h\to 0} \left\|{{\mathcal{R}}}_1\left(\lambda
,S_h\right)\right\|\left\|I-\left(\lambda I-S_h\right){{\mathcal{R}}}%
_2\left(\lambda ,S_h\right)\right\|+\ }
\end{equation*}
\begin{equation*}
+{\mathop{\lim \sup}_{h\to 0} \left\|I-{{\mathcal{R}}}_1\left(\lambda
,S_h\right)(\lambda I-S_h)\right\|\left\|{{\mathcal{R}}}_2\left(\lambda
,S_h\right)\right\|\ }\le 0
\end{equation*}
\end{proof}

\begin{proposition} 
\label{d3.7} Let $\left\{S_h\right\}\ \in C_b\left(\left.(0,1\right],\ B\left(X\right)\right)$
be a bounded family, $\lambda \in r\left(\left\{S_h\right\}\right)$\ and $%
\left\{{\mathcal{R}}(\lambda ,S_h)\right\}\in C_b\left(\left.(0,1\right],\ B\left(X\right)\right)$ such that
\begin{equation*}
{\ \mathop{lim}_{h\to 0} \left\|\left(\lambda I-S_h\right){\mathcal{R}}%
\left(\lambda ,S_h\right)-I\right\|\ }={\mathop{lim}_{h\to 0} \left\|{%
\mathcal{R}}\left(\lambda ,S_h\right)\left(\lambda I-S_h\right)-I\right\|\ }%
=0.
\end{equation*}

If $\left\{R_h\right\}\ \in C_b\left(\left.(0,1\right],\ B\left(X\right)\right)$ is a bounded family such that it is
asymptotically equivalent with $\left\{{\mathcal{R}}(\lambda
,S_h)\right\}\in C_b\left(\left.(0,1\right],\ B\left(X\right)\right)$, then 
\begin{equation*}
{\mathop{\lim}_{h\to 0} \left\|\left(\lambda I-S_h\right)R_h-I\right\|\ }={%
\mathop{\lim}_{h\to 0} \left\|R_h\left(\lambda I-S_h\right)-I\right\|\ }=0.
\end{equation*}
\end{proposition}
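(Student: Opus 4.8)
The plan is to reduce both asserted limits to the two hypotheses on $\{\mathcal{R}(\lambda,S_h)\}$ by a single telescoping step, using only the triangle inequality, submultiplicativity of the operator norm, and the boundedness of $\{S_h\}$. The key preliminary observation is that, since $\{S_h\}$ is a bounded family, the family $\{\lambda I-S_h\}$ is bounded as well: indeed $\|\lambda I-S_h\|\le |\lambda|+\sup_{h\in(0,1]}\|S_h\|<\infty$. Denote this common bound by $M$.

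For the first identity I would insert and subtract $(\lambda I-S_h)\mathcal{R}(\lambda,S_h)$, writing
\[
(\lambda I-S_h)R_h-I=(\lambda I-S_h)\left(R_h-\mathcal{R}(\lambda,S_h)\right)+\left((\lambda I-S_h)\mathcal{R}(\lambda,S_h)-I\right),
\]
and then estimate
\[
\left\|(\lambda I-S_h)R_h-I\right\|\le M\left\|R_h-\mathcal{R}(\lambda,S_h)\right\|+\left\|(\lambda I-S_h)\mathcal{R}(\lambda,S_h)-I\right\|.
\]
As $h\to 0$ the first summand tends to $0$ because $\{R_h\}$ and $\{\mathcal{R}(\lambda,S_h)\}$ are asymptotically equivalent, i.e. $\lim_{h\to0}\|R_h-\mathcal{R}(\lambda,S_h)\|=0$, while the second tends to $0$ by the hypothesis $\lim_{h\to0}\|(\lambda I-S_h)\mathcal{R}(\lambda,S_h)-I\|=0$. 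Hence $\lim_{h\to0}\|(\lambda I-S_h)R_h-I\|=0$.

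The second identity is entirely symmetric: I would split
\[
R_h(\lambda I-S_h)-I=\left(R_h-\mathcal{R}(\lambda,S_h)\right)(\lambda I-S_h)+\left(\mathcal{R}(\lambda,S_h)(\lambda I-S_h)-I\right),
\]
so that
\[
\left\|R_h(\lambda I-S_h)-I\right\|\le \left\|R_h-\mathcal{R}(\lambda,S_h)\right\|M+\left\|\mathcal{R}(\lambda,S_h)(\lambda I-S_h)-I\right\|,
\]
and the same two reasons give $\lim_{h\to0}\|R_h(\lambda I-S_h)-I\|=0$.

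I do not expect a genuine obstacle in this argument; the only point that must not be overlooked is that the multiplier $\|\lambda I-S_h\|$ stays uniformly bounded in $h$, which is exactly where the hypothesis that $\{S_h\}$ is a bounded family enters. Boundedness guarantees that the cross term $(\lambda I-S_h)(R_h-\mathcal{R}(\lambda,S_h))$ is controlled by a fixed constant times $\|R_h-\mathcal{R}(\lambda,S_h)\|$, so that asymptotic equivalence in $C_b\left(\left.(0,1\right],\ B\left(X\right)\right)$ forces it to vanish rather than merely remain bounded.
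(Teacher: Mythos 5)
Your proof is correct and follows essentially the same route as the paper: the same insertion of $(\lambda I-S_h)\mathcal{R}(\lambda,S_h)$, the triangle inequality, and the boundedness of $\{\lambda I-S_h\}$ (coming from boundedness of $\{S_h\}$) combined with the asymptotic equivalence $\lim_{h\to 0}\|R_h-\mathcal{R}(\lambda,S_h)\|=0$. The only difference is cosmetic: you write out the symmetric second identity explicitly, whereas the paper dismisses it with ``analogously.''
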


\begin{proof}
\noindent Let $\lambda \in r\left(\left\{S_h\right\}\right)$. It results

\noindent 
\begin{equation*}
{\ \mathop{\lim \sup}_{h\to 0} \left\|\left(\lambda I-S_h\right)R_h-I\right\|\ }=
\end{equation*}
\begin{equation*}
={\ \mathop{\lim \sup}_{h\to 0} \left\|\left(\lambda
I-S_h\right)R_h-\left(\lambda I-S_h\right){\mathcal{R}}\left(\lambda
,S_h\right)+\left(\lambda I-S_h\right){\mathcal{R}}\left(\lambda
,S_h\right)-I\right\|\ }\le
\end{equation*}
\begin{equation*}
\le {\ \mathop{\lim \sup}_{h\to 0} \left\|\left(\lambda
I-S_h\right)R_h-\left(\lambda I-S_h\right){\mathcal{R}}\left(\lambda
,S_h\right)\right\|+\ }{\ \mathop{\lim \sup}_{h\to 0} \left\|\left(\lambda
I-S_h\right){\mathcal{R}}\left(\lambda ,S_h\right)-I\right\|\ }\le
\end{equation*}
\begin{equation*}
\le {\ \mathop{\lim \sup}_{h\to 0} \left\|\lambda I-S_h\right\|\left\|R_h-{%
\mathcal{R}}\left(\lambda ,S_h\right)\right\|\ }\le 0.
\end{equation*}

\noindent Analogously we can prove that ${\mathop{\lim }_{h\to 0}
\left\|R_h\left(\lambda I-S_h\right)-I\right\|\ }=0$.
\end{proof}

\begin{proposition}
\label{d3.8} Let\textit{ }$\left\{S_h\right\}\ \in C_b\left(\left.(0,1\right],\ B\left(X\right)\right)$\textit{, }$\lambda \in r\left(\left\{S_h\right\}\right)$\textit{ }and\textit{ }$\left\{{\mathcal R}(\lambda ,S_h)\right\}\in C_b\left(\left.(0,1\right],\ B\left(X\right)\right)$\textit{ }such that\textit{}
\[{\mathop{\lim}_{h\to 0} \left\|\left(\lambda I-S_h\right){\mathcal R}\left(\lambda ,S_h\right)-I\right\|\ }={\mathop{\lim}_{h\to 0} \left\|{\mathcal R}\left(\lambda ,S_h\right)\left(\lambda I-S_h\right)-I\right\|\ }=0.\] 
Then
\[{\ \mathop{\lim \sup}_{h\to 0} \left\|{\mathcal R}\left(\lambda ,S_h\right)\right\|\ne 0\ }.\] 
\end{proposition}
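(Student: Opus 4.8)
The plan is to argue directly (with a contradiction formulation as a fallback), exploiting the fact that boundedness of $\{S_h\}$ is built into the hypothesis $\{S_h\}\in C_b((0,1],B(X))$: multiplication by $\lambda I-S_h$ cannot turn an asymptotically vanishing family into something of unit norm, yet the hypothesis forces the product $(\lambda I-S_h)\mathcal{R}(\lambda,S_h)$ to approach $I$.

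First I would record that, since $\{S_h\}$ is a bounded family, so is $\{\lambda I-S_h\}$; setting $M:=\limsup_{h\to 0}\|\lambda I-S_h\|$, we have $M\le |\lambda|+\sup_{h\in(0,1]}\|S_h\|<\infty$. Next, from the hypothesis ${\lim_{h\to 0}\|(\lambda I-S_h)\mathcal{R}(\lambda,S_h)-I\|}=0$ together with the reverse triangle inequality $|\,\|(\lambda I-S_h)\mathcal{R}(\lambda,S_h)\|-\|I\|\,|\le \|(\lambda I-S_h)\mathcal{R}(\lambda,S_h)-I\|$, I would deduce that ${\lim_{h\to 0}\|(\lambda I-S_h)\mathcal{R}(\lambda,S_h)\|}=\|I\|=1$. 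This is the step that forces the product to retain a definite amount of norm in the limit.

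Finally, apply submultiplicativity, $\|(\lambda I-S_h)\mathcal{R}(\lambda,S_h)\|\le \|\lambda I-S_h\|\,\|\mathcal{R}(\lambda,S_h)\|$, pass to $\limsup_{h\to 0}$, and use $\limsup(a_h b_h)\le(\limsup a_h)(\limsup b_h)$ for nonnegative quantities. Combined with the previous step this yields $1\le M\cdot\limsup_{h\to 0}\|\mathcal{R}(\lambda,S_h)\|$, whence $\limsup_{h\to 0}\|\mathcal{R}(\lambda,S_h)\|\ge 1/M>0$, so in particular it is nonzero. Equivalently, if one assumed $\limsup_{h\to 0}\|\mathcal{R}(\lambda,S_h)\|=0$, then $\|\mathcal{R}(\lambda,S_h)\|\to 0$, and the product estimate would give $\|(\lambda I-S_h)\mathcal{R}(\lambda,S_h)\|\to 0$, contradicting its convergence to $1$.

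I do not expect a genuine obstacle here: the argument is essentially the asymptotic counterpart of the classical fact that a resolvent is never the zero operator. The only points to handle carefully are the interaction of $\limsup$ with the product and the implicit normalization $\|I\|=1$ (valid for $X\neq\{0\}$); the crucial structural input is simply that $\{S_h\}$ lies in $C_b((0,1],B(X))$, i.e.\ is uniformly bounded in $h$.
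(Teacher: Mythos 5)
Your proof is correct and follows essentially the same route as the paper's: both rest on the boundedness of $\{S_h\}$ (hence of $\{\lambda I-S_h\}$), the hypothesis forcing $\|(\lambda I-S_h)\mathcal{R}(\lambda,S_h)\|$ to stay near $\|I\|=1$, and submultiplicativity of the norm; the paper phrases this as a contradiction via $1=\|I\|\le\|(\lambda I-S_h)\mathcal{R}(\lambda,S_h)-I\|+\|(\lambda I-S_h)\mathcal{R}(\lambda,S_h)\|$, which is exactly your fallback formulation. Your direct version even yields the slightly sharper quantitative bound $\limsup_{h\to 0}\|\mathcal{R}(\lambda,S_h)\|\ge 1/M$, but this is a cosmetic difference, not a different argument.
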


\begin{proof}
Suppose that ${\ \mathop{\lim \sup}_{h\to 0} \left\|{\mathcal R}\left(\lambda ,S_h\right)\right\|\ }=0$. Since 

\noindent 
\[{\rm 1=}\left\|I\right\|\le \left\|\left(\lambda I-S_h\right){\mathcal R}\left(\lambda ,S_h\right)-I\right\|+\left\|\left(\lambda I-S_h\right){\mathcal R}\left(\lambda ,S_h\right)\right\|\]

\noindent And taking into account that $\left\{S_h\right\}\ \in C_b\left(\left.(0,1\right],\ B\left(X\right)\right)$, it follows that

\noindent 
\[{{\rm 1 \le}\mathop{\lim \sup}_{h\to 0} \left\|\left(\lambda I-S_h\right){\mathcal R}\left(\lambda ,S_h\right)-I\right\|\ }+{\mathop{\lim \sup}_{h\to 0} \left\|\left(\lambda I-S_h\right){\mathcal R}\left(\lambda ,S_h\right)\right\|\ }\le\]
\[\le {\ \mathop{\lim \sup}_{h\to 0} \left\|\lambda I-S_h\right\|\left\|{\mathcal R}\left(\lambda ,S_h\right)\right\|\ }\le \left(\left|\lambda \right|+{\ \mathop{\lim \sup}_{h\to 0} \left\|S_h\right\|\ }\right){\ \mathop{{\lim \sup}}_{h\to 0} \left\|{\mathcal R}\left(\lambda ,S_h\right)\right\|\ }=0,\] 
contradiction.
\end{proof}
\noindent 

\noindent

\begin{proposition}
\label{d3.9} Let $\left\{S_h\right\}\ \in C_b\left(\left.(0,1\right],\ B\left(X\right)\right)$. If $\lambda ,\mu \in r(\left\{S_h\right\})$ such that there
are $\left\{{\mathcal{R}}(\lambda ,S_h)\right\},\ \ \left\{{\mathcal{R}}(\mu
,S_h)\right\}\in C_b\left(\left.(0,1\right],\ B\left(X\right)\right)$ with property ${\mathop{\lim}_{h\to 0}
\left\|{\mathcal{R}}\left(\lambda ,S_h\right)-{\mathcal{R}}\left(\mu
,S_h\right)\right\|\ }=0$, then $\lambda =\mu $.
\end{proposition}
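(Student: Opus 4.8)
The plan is to argue by contradiction: assume $\lambda \neq \mu$ and show that the hypothesis ${\lim_{h\to 0}}\|\mathcal{R}(\lambda,S_h)-\mathcal{R}(\mu,S_h)\|=0$ cannot coexist with the fact that both families are asymptotic resolvents. The only relation linking the two resolvents is the asymptotic resolvent equation of Proposition \ref{d3.4}, so I would start from there. Note that the hypothesis $\{S_h\}\in C_b((0,1],B(X))$ precisely means $\{S_h\}$ is a bounded family, so Proposition \ref{d3.4} applies.

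First I would invoke Proposition \ref{d3.4} to get
\begin{equation*}
{\lim_{h\to 0}}\left\| \mathcal{R}(\lambda,S_h)-\mathcal{R}(\mu,S_h)-(\mu-\lambda)\mathcal{R}(\lambda,S_h)\mathcal{R}(\mu,S_h)\right\|=0 .
\end{equation*}
Combining this with the hypothesis via the triangle inequality yields ${\lim_{h\to 0}}\|(\mu-\lambda)\mathcal{R}(\lambda,S_h)\mathcal{R}(\mu,S_h)\|=0$. Since $\lambda\neq\mu$, the scalar $|\mu-\lambda|$ is nonzero and may be divided out, leaving ${\lim_{h\to 0}}\|\mathcal{R}(\lambda,S_h)\mathcal{R}(\mu,S_h)\|=0$. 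This is the crux: I must now convert this product estimate into a contradiction.

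To do so I would sandwich the product between the two approximate inverses, considering $(\lambda I-S_h)\,\mathcal{R}(\lambda,S_h)\mathcal{R}(\mu,S_h)\,(\mu I-S_h)$ and regrouping it as $[(\lambda I-S_h)\mathcal{R}(\lambda,S_h)]\,[\mathcal{R}(\mu,S_h)(\mu I-S_h)]$. By the defining property of an asymptotic resolvent (Definition \ref{d3.1}) each bracketed factor tends to $I$ in norm, and since $P_h\to I$, $Q_h\to I$ forces $P_hQ_h\to I$ (as $\|P_hQ_h-I\|\le\|P_h\|\,\|Q_h-I\|+\|P_h-I\|$ with $\|P_h\|$ bounded), the whole expression tends to $I$, so its norm tends to $1$. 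On the other hand, boundedness of $\{S_h\}$ gives $\sup_h\|\lambda I-S_h\|<\infty$ and $\sup_h\|\mu I-S_h\|<\infty$, whence the same expression is bounded in norm by $\|\lambda I-S_h\|\,\|\mathcal{R}(\lambda,S_h)\mathcal{R}(\mu,S_h)\|\,\|\mu I-S_h\|$, which tends to $0$. A single norm cannot converge to both $1$ and $0$, so $\lambda=\mu$. (Alternatively, one could multiply only on the left by $(\lambda I-S_h)$ and invoke Proposition \ref{d3.8}, which guarantees $\limsup_{h\to 0}\|\mathcal{R}(\mu,S_h)\|\neq 0$, to reach the same contradiction.)

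The triangle-inequality bookkeeping is routine; the one genuine point requiring care is that all the relations are \emph{asymptotic} rather than exact identities. I would therefore make sure that every passage from a limit statement to a product explicitly uses the uniform boundedness built into $C_b((0,1],B(X))$: the product of a norm-null family with a uniformly bounded one is again norm-null, and the norm of a norm-convergent family is eventually bounded. This is the only place where the hypothesis $\{S_h\}\in C_b((0,1],B(X))$ is essential, and it is exactly what prevents the limits from degenerating.
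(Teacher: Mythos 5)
Your proof is correct, but it takes a genuinely different route from the paper's. You reach the contradiction through the asymptotic resolvent equation: Proposition \ref{d3.4} plus the hypothesis gives $\lim_{h\to 0}\|(\mu-\lambda)\mathcal{R}(\lambda,S_h)\mathcal{R}(\mu,S_h)\|=0$, so for $\lambda\neq\mu$ the product $\mathcal{R}(\lambda,S_h)\mathcal{R}(\mu,S_h)$ is norm-null, and your sandwich $(\lambda I-S_h)\,\mathcal{R}(\lambda,S_h)\mathcal{R}(\mu,S_h)\,(\mu I-S_h)$, whose norm must tend both to $1$ (regrouping it as a product of two families tending to $I$) and to $0$ (uniform boundedness of $\{\lambda I-S_h\}$ and $\{\mu I-S_h\}$), finishes the job. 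The paper argues instead as follows: by Proposition \ref{d3.7}, the asymptotic equivalence of $\{\mathcal{R}(\lambda,S_h)\}$ with $\{\mathcal{R}(\mu,S_h)\}$ makes $\{\mathcal{R}(\lambda,S_h)\}$ an asymptotic resolvent at $\mu$ as well; subtracting the two approximate-identity relations $(\lambda I-S_h)\mathcal{R}(\lambda,S_h)\to I$ and $(\mu I-S_h)\mathcal{R}(\lambda,S_h)\to I$ collapses to $|\lambda-\mu|\,\limsup_{h\to 0}\|\mathcal{R}(\lambda,S_h)\|=0$, and Proposition \ref{d3.8} (which guarantees $\limsup_{h\to 0}\|\mathcal{R}(\lambda,S_h)\|\neq 0$) then forces $\lambda=\mu$. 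The paper's route is a bit shorter because the difference of those two relations is linear in $\mathcal{R}(\lambda,S_h)$ --- no product of resolvents ever appears --- whereas your route buys independence from Propositions \ref{d3.7} and \ref{d3.8}: the sandwich step is elementary, using only the defining limits and boundedness (your parenthetical alternative, multiplying on the left only and invoking Proposition \ref{d3.8}, is essentially a hybrid that lands on the paper's final step). One point worth making explicit in your writeup: Proposition \ref{d3.4} is stated for ``the'' resolvent families, but its proof uses only the defining limit relations, so it does apply to the particular families $\{\mathcal{R}(\lambda,S_h)\}$, $\{\mathcal{R}(\mu,S_h)\}$ named in the hypothesis --- a point that matters here because asymptotic resolvents are unique only up to asymptotic equivalence (Proposition \ref{d3.6}).
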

\noindent

\begin{proof}
\noindent For $\lambda \in r\left(\left\{S_h\right\}\right)$ let $\left\{{%
\mathcal{R}}(\lambda ,S_h)\right\}\in C_b\left(\left.(0,1\right],\ B\left(X\right)\right)$ such that

\noindent 
\begin{equation*}
{\ \mathop{\lim }_{h\to 0} \left\|\left(\lambda I-S_h\right){\mathcal{R}}%
\left(\lambda ,S_h\right)-I\right\|\ }={\mathop{\lim }_{h\to 0} \left\|{%
\mathcal{R}}\left(\lambda ,S_h\right)\left(\lambda I-S_h\right)-I\right\|\ }%
=0
\end{equation*}

\noindent and for $\mu $$\in r\left(\left\{S_h\right\}\right)$ let $\left\{{%
\mathcal{R}}(\mu ,S_h)\right\}\in C_b\left(\left.(0,1\right],\ B\left(X\right)\right)$ such that

\noindent 
\begin{equation*}
{\ \mathop{\lim }_{h\to 0} \left\|\left(\mu I-S_h\right){\mathcal{R}}%
\left(\mu ,S_h\right)-I\right\|\ }={\mathop{\lim }_{h\to 0} \left\|{\mathcal{%
R}}\left(\mu ,S_h\right)\left(\mu I-S_h\right)-I\right\|\ }=0.
\end{equation*}
If 
\begin{equation*}
{\mathop{\lim }_{h\to 0} \left\|{\mathcal{R}}\left(\lambda ,S_h\right)-{%
\mathcal{R}}\left(\mu ,S_h\right)\right\|\ }=0,
\end{equation*}

\noindent Having in view Proposition \ref{d3.7}, we obtain

\noindent 
\begin{equation*}
{\ \mathop{\lim }_{h\to 0} \left\|\left(\mu I-S_h\right){\mathcal{R}}%
\left(\lambda ,S_h\right)-I\right\|\ }={\mathop{\lim }_{h\to 0} \left\|{%
\mathcal{R}}\left(\lambda ,S_h\right)\left(\mu I-S_h\right)-I\right\|\ }=0.
\end{equation*}
Hence

\noindent 
\begin{equation*}
{\ \mathop{\lim \sup}_{h\to 0} \left\|\left(\lambda I-S_h\right){\mathcal{R}}%
\left(\lambda ,S_h\right)-\left(\mu I-S_h\right){\mathcal{R}}\left(\lambda
,S_h\right)\right\|\ }\le
\end{equation*}
\begin{equation*}
\le {\ \mathop{\lim \sup}_{h\to 0} \left\|\left(\lambda I-S_h\right){\mathcal{R}}%
\left(\lambda ,S_h\right)-I\right\|\ }+{\ \mathop{\lim }_{h\to 0}
\left\|\left(\mu I-S_h\right){\mathcal{R}}\left(\lambda
,S_h\right)-I\right\|\ }=0.
\end{equation*}
Therefore 
\begin{equation*}
\left|\lambda -\mu \right|{\ \mathop{\lim \sup}_{h\to 0} \left\|{\mathcal{R}}%
\left(\lambda ,S_h\right)\right\|\ }=0
\end{equation*}

\noindent And according to Proposition \ref{d3.8} (${\ \mathop{\lim \sup}_{h\to 0}
\left\|R\left(\lambda ,S_h\right)\right\|\ne 0\ }$) it follows $\lambda =\mu 
$.
\end{proof}

\noindent

\begin{lemma}
\label{d3.10} If two bounded families $\left\{S_h\right\},\
\left\{T_h\right\}\ \in C_b\left(\left.(0,1\right],\ B\left(X\right)\right)$ are asymptotically equivalent and
there is $\left\{R_h\left(\lambda \right)\right\}\in C_b\left(\left.(0,1\right],\ B\left(X\right)\right)\ $
such that 
\begin{equation*}
{\mathop{\ lim}_{h\to 0} \left\|\left(\lambda I-S_h\right)R_h\left(\lambda
\right)-I\right\|\ }={\mathop{lim}_{h\to 0} \left\|R_h\left(\lambda
\right)\left(\lambda I-S_h\right)-I\right\|\ }=0,
\end{equation*}
then

\noindent 
\begin{equation*}
{\mathop{lim}_{h\to 0} \left\|\left(\lambda I-T_h\right)R_h\left(\lambda
\right)-I\right\|\ }={\mathop{lim}_{h\to 0} \left\|R_h\left(\lambda
\right)\left(\lambda I-T_h\right)-I\right\|\ }=0.
\end{equation*}
\end{lemma}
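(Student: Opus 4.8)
The plan is to reduce the claim to the hypothesis on $\{S_h\}$ by inserting $(\lambda I-S_h)R_h(\lambda)$ as an intermediate term and then exploiting the asymptotic equivalence $\lim_{h\to 0}\|S_h-T_h\|=0$ together with the uniform boundedness of $\{R_h(\lambda)\}$ coming from its membership in $C_b((0,1],B(X))$. This is the same mechanism used in Proposition \ref{d3.7}, only with the roles of the families rearranged.

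First I would rewrite the operator whose norm must be controlled. Since $\lambda I-T_h=(\lambda I-S_h)+(S_h-T_h)$, one has
\[(\lambda I-T_h)R_h(\lambda)-I=(S_h-T_h)R_h(\lambda)+\big((\lambda I-S_h)R_h(\lambda)-I\big).\]
Applying the triangle inequality and submultiplicativity of the operator norm gives
\[\left\|(\lambda I-T_h)R_h(\lambda)-I\right\|\le \left\|S_h-T_h\right\|\,\left\|R_h(\lambda)\right\|+\left\|(\lambda I-S_h)R_h(\lambda)-I\right\|.\]

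Next I would pass to the limit as $h\to 0$. The term $\|(\lambda I-S_h)R_h(\lambda)-I\|$ tends to $0$ directly by hypothesis. For the remaining term, because $\{R_h(\lambda)\}\in C_b((0,1],B(X))$ is a bounded family we have $\limsup_{h\to 0}\|R_h(\lambda)\|<\infty$, while $\|S_h-T_h\|\to 0$ by the asymptotic equivalence of $\{S_h\}$ and $\{T_h\}$; hence the product $\|S_h-T_h\|\,\|R_h(\lambda)\|\to 0$. Combining these, the right-hand side vanishes and $\lim_{h\to 0}\|(\lambda I-T_h)R_h(\lambda)-I\|=0$.

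Finally I would repeat the identical manipulation on the other side, writing $R_h(\lambda)(\lambda I-T_h)-I=R_h(\lambda)(S_h-T_h)+\big(R_h(\lambda)(\lambda I-S_h)-I\big)$ and estimating $\|R_h(\lambda)(\lambda I-T_h)-I\|\le \|R_h(\lambda)\|\,\|S_h-T_h\|+\|R_h(\lambda)(\lambda I-S_h)-I\|$, which tends to $0$ by the same reasoning. I do not expect a genuine obstacle here: the argument is a routine triangle-inequality estimate, and the only point that must be invoked with care is the uniform boundedness of $\{R_h(\lambda)\}$ guaranteed by membership in $C_b((0,1],B(X))$, since without it the product term could fail to vanish even though $\|S_h-T_h\|\to 0$.
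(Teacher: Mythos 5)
Your proof is correct and follows essentially the same route as the paper's: both insert $(\lambda I-S_h)R_h(\lambda)$ as an intermediate term, apply the triangle inequality, and kill the difference term via $\left\|S_h-T_h\right\|\left\|R_h(\lambda)\right\|\to 0$ using the boundedness of $\left\{R_h(\lambda)\right\}$. The only difference is cosmetic (you expand $\lambda I-T_h$ directly rather than adding and subtracting), and your explicit treatment of the second identity matches what the paper dismisses as ``analogous.''
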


\begin{proof}
\noindent Since the two families $\left\{S_h\right\},\ \left\{T_h\right\}\
\in C_b\left(\left.(0,1\right],\ B\left(X\right)\right)$ are asymptotically equivalent, i.e. ${\mathop{\lim }_{h\to 0}
\left\|S_h-T_h\right\|=0\ }$, we have

\noindent 
\begin{equation*}
{\mathop{\lim \sup}_{h\to 0} \left\|\left(\lambda I-T_h\right)R_h\left(\lambda
\right)-I\right\|\ }=
\end{equation*}
\begin{equation*}
={\mathop{\lim \sup}_{h\to 0} \left\|\left(\lambda I-T_h\right)R_h\left(\lambda
\right)-\left(\lambda I-S_h\right)R_h\left(\lambda \right)+\left(\lambda
I-S_h\right)R_h\left(\lambda \right)-I\right\|\ }\le
\end{equation*}
\begin{equation*}
\le {\mathop{\lim \sup}_{h\to 0} \left\|\left(\lambda
I-T_h\right)R_h\left(\lambda \right)-\left(\lambda
I-S_h\right)R_h\left(\lambda \right)\right\|\ }+{\mathop{\lim \sup}_{h\to 0}
\left\|\left(\lambda I-S_h\right)R_h\left(\lambda \right)-I\right\|\ }=
\end{equation*}
\begin{equation*}
={\mathop{\lim \sup}_{h\to 0} \left\|T_hR_h\left(\lambda
\right)-S_hR_h\left(\lambda \right)\right\|\ }\le {\mathop{\lim \sup}_{h\to 0}
\left\|T_h-S_h\right\|\ }\left\|R_h\left(\lambda \right)\right\|\le 0.
\end{equation*}
\end{proof}

\noindent
\begin{remark}
\label{d3.11} Since $B_{\infty }=C_b\left(\left.(0,1\right],\ B\left(X\right)\right)/C_0\left(\left.(0,1\right],\ B\left(X\right)\right)$ is a Banach algebra, then make sense 

\noindent 
\[r\left(\dot{\left\{S_h\right\}}\right)=\left\{\left.\lambda \in {\mathbb C}\right|\exists \dot{\left\{R_h\right\}}\in \ B_{\infty }\ \ a.î.\ \left(\lambda \dot{\left\{I\right\}}-\dot{\left\{S_h\right\}}\right)\dot{\left\{R_h\right\}}=\dot{\left\{I\right\}}=\dot{\left\{R_h\right\}}\left(\lambda \dot{\left\{I\right\}}-\dot{\left\{S_h\right\}}\right)\right\}\] 
and
\[Sp\left(\dot{\left\{S_h\right\}}\right){\rm =}{\mathbb C}{\rm \backslash }{\rm \ }r\left(\dot{\left\{S_h\right\}}\right).\] 

\noindent Let $\left\{S_h\right\}\ \in C_b\left(\left.(0,1\right],\ B\left(X\right)\right)$ and $\lambda \in r(\left\{S_h\right\})$\textit{.} Fie $\left\{{\mathcal R}(\lambda ,S_h)\right\}\in C_b\left(\left.(0,1\right],\ B\left(X\right)\right)$ such that

\noindent 
\[{\ \mathop{\lim }_{h\to 0} \left\|\left(\lambda I-S_h\right){\mathcal R}\left(\lambda ,S_h\right)-I\right\|\ }={\mathop{\lim }_{h\to 0} \left\|{\mathcal R}\left(\lambda ,S_h\right)\left(\lambda I-S_h\right)-I\right\|\ }=0.\]

\noindent By Proposition \ref{d3.7}, it results that for any $\left\{{\mathcal R}'(\lambda ,S_h)\right\}\in \dot{\left\{{\mathcal R}(\lambda ,S_h)\right\}}$, we have 

\noindent 
\[{\ \mathop{\lim }_{h\to 0} \left\|\left(\lambda I-S_h\right){\mathcal R}'\left(\lambda ,S_h\right)-I\right\|\ }={\mathop{\lim }_{h\to 0} \left\|{\mathcal R}'\left(\lambda ,S_h\right)\left(\lambda I-S_h\right)-I\right\|\ }=0.\]

\noindent Moreover, for every  $\left\{{S'}_h\right\}\in \dot{\left\{S_h\right\}}$, by Lemma \ref{d3.10} we have

\noindent 
\[{\ \mathop{\lim }_{h\to 0} \left\|\left(\lambda I-{S'}_h\right){\mathcal R}\left(\lambda ,S_h\right)-I\right\|\ }={\mathop{\lim }_{h\to 0} \left\|{\mathcal R}\left(\lambda ,S_h\right)\left(\lambda I-{S'}_h\right)-I\right\|\ }=0.\]

\noindent Therefore every representative of class $\dot{\left\{{\mathcal R}(\lambda ,S_h)\right\}}\in \ B_{\infty }$ is an ''inverse'' for any representative of class $\dot{\left\{S_h\right\}}$. 
\end{remark}
\noindent 

\begin{theorem}
\label{d3.12} Let $\left\{S_h\right\}\in C_b\left(\left.(0,1\right],\ B\left(X\right)\right)$. Then

\noindent 
\[Sp\left(\dot{\left\{S_h\right\}}\right)=Sp\left(\left\{S_h\right\}\right).\] 
\end{theorem}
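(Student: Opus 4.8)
The plan is to establish the equivalent statement $r(\{S_h\}) = r(\dot{\{S_h\}})$ by showing that, for each fixed $\lambda \in \mathbb{C}$, the two membership conditions are literally the same once the quotient structure of $B_{\infty}$ is unwound. The only facts I need are the defining properties of the quotient algebra: the product is $\dot{\{A_h\}}\,\dot{\{B_h\}} = \dot{\{A_h B_h\}}$ (well-defined because $C_0\left(\left.(0,1\right],\ B\left(X\right)\right)$ is a bilateral ideal), and two classes coincide, $\dot{\{A_h\}} = \dot{\{B_h\}}$, precisely when $\{A_h - B_h\} \in C_0\left(\left.(0,1\right],\ B\left(X\right)\right)$, i.e.\ when $\mathop{\lim}_{h\to 0}\|A_h - B_h\| = 0$. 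I also use $\lambda\dot{\{I\}} - \dot{\{S_h\}} = \dot{\{\lambda I - S_h\}}$.

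First I would prove the inclusion $r(\{S_h\}) \subseteq r(\dot{\{S_h\}})$. Given $\lambda \in r(\{S_h\})$, Definition \ref{d3.1} supplies a bounded family $\{\mathcal{R}(\lambda, S_h)\} \in C_b\left(\left.(0,1\right],\ B\left(X\right)\right)$ with $\mathop{\lim}_{h\to 0}\|(\lambda I - S_h)\mathcal{R}(\lambda, S_h) - I\| = \mathop{\lim}_{h\to 0}\|\mathcal{R}(\lambda, S_h)(\lambda I - S_h) - I\| = 0$. Setting $\dot{\{R_h\}} := \dot{\{\mathcal{R}(\lambda, S_h)\}}$, the first limit says exactly that $\{(\lambda I - S_h)\mathcal{R}(\lambda, S_h) - I\} \in C_0\left(\left.(0,1\right],\ B\left(X\right)\right)$, which by the product rule is the identity $(\lambda\dot{\{I\}} - \dot{\{S_h\}})\dot{\{R_h\}} = \dot{\{I\}}$ in $B_{\infty}$; the second limit gives $\dot{\{R_h\}}(\lambda\dot{\{I\}} - \dot{\{S_h\}}) = \dot{\{I\}}$. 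Hence $\lambda \in r(\dot{\{S_h\}})$.

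Conversely, for $r(\dot{\{S_h\}}) \subseteq r(\{S_h\})$, given $\lambda \in r(\dot{\{S_h\}})$ I would pick any representative $\{R_h\} \in C_b\left(\left.(0,1\right],\ B\left(X\right)\right)$ of the inverse class $\dot{\{R_h\}}$. Reading the two quotient identities back through the same product-and-equality dictionary turns them into $\mathop{\lim}_{h\to 0}\|(\lambda I - S_h)R_h - I\| = \mathop{\lim}_{h\to 0}\|R_h(\lambda I - S_h) - I\| = 0$, which is precisely the condition that $\lambda \in r(\{S_h\})$, with $\{\mathcal{R}(\lambda, S_h)\} = \{R_h\}$. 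The choice of representative is harmless: Proposition \ref{d3.7} and Lemma \ref{d3.10}, as recorded in Remark \ref{d3.11}, guarantee that any representative of $\dot{\{R_h\}}$ serves as an asymptotic inverse. Combining the two inclusions gives $r(\{S_h\}) = r(\dot{\{S_h\}})$, and taking complements yields $Sp(\{S_h\}) = Sp(\dot{\{S_h\}})$.

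I do not expect a genuine obstacle: the content of the theorem is that $\lambda I - S_h$ being asymptotically invertible in the quotient algebra $B_{\infty}$ is the same assertion as the asymptotic resolvent condition of Definition \ref{d3.1}. The only point requiring care is the bookkeeping that ``$\dot{\{A_h\}} = \dot{\{I\}}$ in $B_{\infty}$'' is equivalent to ``$\mathop{\lim}_{h\to 0}\|A_h - I\| = 0$'' — that is, that the $\inf$-over-representatives quotient norm detects exactly the families vanishing asymptotically — which is immediate from $C_0\left(\left.(0,1\right],\ B\left(X\right)\right)$ being the kernel of the quotient map.
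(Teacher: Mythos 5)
Your proof is correct and follows essentially the same route as the paper: both directions are handled by unwinding the quotient structure of $B_{\infty}$, using that $\dot{\left\{A_h\right\}}=\dot{\left\{I\right\}}$ exactly when $\mathop{\lim}_{h\to 0}\left\|A_h-I\right\|=0$ and that the product rule $\dot{\left\{A_h\right\}}\dot{\left\{B_h\right\}}=\dot{\left\{A_hB_h\right\}}$ holds for any representatives. The only cosmetic difference is the order of the two inclusions and your explicit citation of Proposition \ref{d3.7}, Lemma \ref{d3.10} and Remark \ref{d3.11} for representative-independence, which the paper handles implicitly in the same way.
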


\begin{proof}
 Let $\lambda \in r\left(\dot{\left\{S_h\right\}}\right)$. Then there is $\dot{\left\{R_h\right\}}\in \ B_{\infty }\ \ $such that

\noindent 
\[\left(\lambda \dot{\left\{I\right\}}-\dot{\left\{S_h\right\}}\right)\dot{\left\{R_h\right\}}=\dot{\left\{I\right\}}=\dot{\left\{R_h\right\}}\left(\lambda \dot{\left\{I\right\}}-\dot{\left\{S_h\right\}}\right).\]

\noindent Taking into account the algebraic relations of the Banach algebra $B_{\infty }$, it results

\noindent 
\[\dot{\left\{I\right\}}=\left(\lambda \dot{\left\{I\right\}}-\dot{\left\{S_h\right\}}\right)\dot{\left\{R_h\right\}}=\ \dot{\left\{\lambda I-S_h\right\}}\dot{\left\{R_h\right\}}=\dot{\left\{(\lambda I-S_h)R_h\right\}}.\]

\noindent Therefore  $\left\{\left(\lambda I-S_h\right)R_h-I\right\}\in B_{\infty }$, i.e. 

\noindent 
\[{\mathop{{\rm l}{\rm im}}_{h\to 0} \left\|\left(\lambda I-S_h\right)R_h-I\right\|\ }=0.\]

\noindent Analogously we can show that ${\mathop{\lim }_{h\to 0} \left\|R_h\left(\lambda I-S_h\right)-I\right\|\ }=0$. Then  $\lambda \in r\left(\left\{S_h\right\}\right)$.

\noindent Conversely, let $\lambda \in r\left(\left\{S_h\right\}\right)$. Then there is $\left\{{\mathcal R}(\lambda ,S_h)\right\}\in C_b\left(\left.(0,1\right],\ B\left(X\right)\right)$ such that

\noindent 
\[{\ \mathop{\lim }_{h\to 0} \left\|\left(\lambda I-S_h\right){\mathcal R}\left(\lambda ,S_h\right)-I\right\|\ }={\mathop{\lim }_{h\to 0} \left\|{\mathcal R}\left(\lambda ,S_h\right)\left(\lambda I-S_h\right)-I\right\|\ }=0.\]

\noindent Let $\left\{R_h\right\}\in \dot{\left\{{\mathcal R}(\lambda ,S_h)\right\}}$. Then 

\noindent 
\[{\ \mathop{\lim }_{h\to 0} \left\|\left(\lambda I-S_h\right)R_h-I\right\|\ }={\mathop{\lim }_{h\to 0} \left\|R_h\left(\lambda I-S_h\right)-I\right\|\ }=0\]

\noindent and $\left\{\left(\lambda I-S_h\right)R_h-I\right\},\ \left\{R_h\left(\lambda I-S_h\right)-I\right\}\in B_{\infty }$, i.e.

\noindent 
\[\left(\lambda \dot{\left\{I\right\}}-\dot{\left\{S_h\right\}}\right)\dot{\left\{R_h\right\}}=\dot{\left\{(\lambda I-S_h)R_h\right\}}=\dot{\left\{I\right\}}\] 
and
\[\dot{\left\{R_h\right\}}\left(\lambda \dot{\left\{I\right\}}-\dot{\left\{S_h\right\}}\right)=\dot{\left\{R_h(\lambda I-S_h)\right\}}=\dot{\left\{I\right\}}.\] 
Therefore  $\lambda \in r\left(\dot{\left\{S_h\right\}}\right).$
\end{proof}
\noindent 
\\
\begin{remark} Let $\left\{S_h\right\}\ \in C_b\left(\left.(0,1\right],\ B\left(X\right)\right)$ and $\lambda \in r(\left\{S_h\right\})$\textit{.} Then there is $\left\{{\mathcal R}(\lambda ,S_h)\right\}\in C_b\left(\left.(0,1\right],\ B\left(X\right)\right)$ such that

\noindent 
\[{\ \mathop{\lim }_{h\to 0} \left\|\left(\lambda I-S_h\right){\mathcal R}\left(\lambda ,S_h\right)-I\right\|\ }={\mathop{\lim }_{h\to 0} \left\|{\mathcal R}\left(\lambda ,S_h\right)\left(\lambda I-S_h\right)-I\right\|\ }=0.\]

\noindent if and only if
\[\left(\lambda \dot{\left\{I\right\}}-\dot{\left\{S_h\right\}}\right)\dot{\left\{{\mathcal R}\left(\lambda ,S_h\right)\right\}}=\dot{\left\{I\right\}}=\dot{\left\{{\mathcal R}\left(\lambda ,S_h\right)\right\}}\left(\lambda \dot{\left\{I\right\}}-\dot{\left\{S_h\right\}}\right).\] 
 \end{remark}

\begin{proposition}
\label{d3.13} Let $\left\{S_h\right\},\ \left\{T_h\right\}\ \in C_b\left(\left.(0,1\right],\ B\left(X\right)\right)$ be two families. If ${\mathop{\lim}_{h\to 0} \left\|T_hS_h-S_hT_h\right\|\ }=0$, then $\ {\mathop{\lim}_{h\to 0} \left\|R(\lambda ,T_h)S_h-S_hR(\lambda ,T_h)\right\|\ }=0$, for any $\lambda \in r(\left\{T_h\right\})$.
\end{proposition}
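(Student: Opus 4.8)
The plan is to adapt the classical algebraic fact that an operator commuting with an invertible $A$ also commutes with $A^{-1}$, but with the exact inverse replaced by the asymptotic resolvent and every error term controlled by boundedness. Since $\lambda\in r(\{T_h\})$, fix a representative $\{R_h\}:=\{\mathcal{R}(\lambda,T_h)\}\in C_b\left(\left.(0,1\right],B(X)\right)$ with ${\mathop{\lim}_{h\to 0}\Vert A_hR_h-I\Vert}={\mathop{\lim}_{h\to 0}\Vert R_hA_h-I\Vert}=0$, where I write $A_h:=\lambda I-T_h$. (By Proposition \ref{d3.6} any two such representatives are asymptotically equivalent, so the conclusion will not depend on this choice.)

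First I would record two elementary facts. Because $\{R_h\}$ and $\{S_h\}$ belong to $C_b\left(\left.(0,1\right],B(X)\right)$, the norms $\Vert R_h\Vert$ and $\Vert S_h\Vert$ are uniformly bounded in $h$; this is exactly what lets me absorb bounded factors in the estimates below. Secondly, the hypothesis ${\mathop{\lim}_{h\to 0}\Vert T_hS_h-S_hT_h\Vert}=0$ gives at once ${\mathop{\lim}_{h\to 0}\Vert A_hS_h-S_hA_h\Vert}=0$, since $A_hS_h-S_hA_h=-(T_hS_h-S_hT_h)$.

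The core step is the exact telescoping identity
\[
R_hS_h-S_hR_h=R_hS_h\left(I-A_hR_h\right)+R_h\left(S_hA_h-A_hS_h\right)R_h+\left(R_hA_h-I\right)S_hR_h,
\]
which I would verify by expanding the right-hand side: the two intermediate terms $R_hS_hA_hR_h$ and $R_hA_hS_hR_h$ cancel in pairs, leaving precisely $R_hS_h-S_hR_h$. Each of the three summands tends to $0$ in norm: the first is dominated by $\Vert R_h\Vert\,\Vert S_h\Vert\,\Vert A_hR_h-I\Vert$, the second by $\Vert R_h\Vert\,\Vert S_hA_h-A_hS_h\Vert\,\Vert R_h\Vert$, and the third by $\Vert R_hA_h-I\Vert\,\Vert S_h\Vert\,\Vert R_h\Vert$. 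In every case the vanishing factor goes to $0$ while the remaining factors stay bounded, so the whole right-hand side tends to $0$ and hence ${\mathop{\lim}_{h\to 0}\Vert R_hS_h-S_hR_h\Vert}=0$, as required.

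I do not anticipate a genuine obstacle; the only point requiring care is that $R_h$ is merely an asymptotic inverse of $A_h$, so the exact cancellations of the classical argument are unavailable. They are replaced by inserting the two approximate identities $A_hR_h\approx I$ and $R_hA_h\approx I$, with the resulting errors controlled by the uniform boundedness of the families — which is precisely why the hypothesis $\{S_h\},\{R_h\}\in C_b\left(\left.(0,1\right],B(X)\right)$ is essential.
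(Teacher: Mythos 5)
Your proof is correct, but it takes a genuinely different route from the paper's. You work directly at the level of the families: writing $A_h=\lambda I-T_h$ and $R_h=\mathcal{R}(\lambda,T_h)$, your telescoping identity
\begin{equation*}
R_hS_h-S_hR_h=R_hS_h\left(I-A_hR_h\right)+R_h\left(S_hA_h-A_hS_h\right)R_h+\left(R_hA_h-I\right)S_hR_h
\end{equation*}
is exact, the middle commutator satisfies $\left\Vert S_hA_h-A_hS_h\right\Vert =\left\Vert T_hS_h-S_hT_h\right\Vert \rightarrow 0$, and each summand is killed by one vanishing factor times factors that stay bounded because $\left\{S_h\right\},\left\{R_h\right\}\in C_b\left(\left.(0,1\right],B(X)\right)$. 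The paper instead passes to the quotient Banach algebra $B_{\infty}=C_b\left(\left.(0,1\right],B(X)\right)/C_0\left(\left.(0,1\right],B(X)\right)$: there asymptotic commutation of $\left\{S_h\right\}$ and $\left\{T_h\right\}$ becomes exact commutation of the classes $\dot{\{S_h\}}$ and $\dot{\{T_h\}}$, the class $\dot{\{\mathcal{R}(\lambda,T_h)\}}$ is an exact two-sided inverse of $\lambda\dot{\{I\}}-\dot{\{T_h\}}$ (as recorded after Theorem \ref{d3.12}), and the conclusion is the standard algebraic fact that anything commuting with an invertible element commutes with its inverse. The two arguments share the same algebraic core --- your identity is precisely the quantitative unwinding of that algebraic fact, with the error terms $I-A_hR_h$ and $R_hA_h-I$ replacing the exact cancellations --- but yours is self-contained and never invokes the $B_{\infty}$ machinery, whereas the paper's proof is a two-line consequence of machinery it has already built. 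Your observation that Proposition \ref{d3.6} makes the conclusion independent of the chosen representative of the asymptotic resolvent addresses a point the paper's statement leaves implicit, and is worth keeping.
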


\begin{proof}
 If $\lambda \in r(\left\{T_h\right\})$, then there is $\left\{{{\mathcal R}(\lambda ,T}_h)\right\}\in C_b\left(\left.(0,1\right],\ B\left(X\right)\right)$ such that

\noindent 
\[{\ \mathop{\lim }_{h\to 0} \left\|\left(\lambda I-T_h\right){{\mathcal R}(\lambda ,T}_h)-I\right\|\ }={\mathop{\lim }_{h\to 0} \left\|{{\mathcal R}(\lambda ,T}_h)\left(\lambda I-T_h\right)-I\right\|\ }=0.\] 
Therefore

\noindent 
\[{\mathop{\lim}_{h\to 0} \left\|T_hS_h-S_hT_h\right\|\ }=0\ \Leftrightarrow \dot{\left\{S_h\right\}}\dot{\left\{T_h\right\}}=\dot{\left\{T_h\right\}}\dot{\left\{S_h\right\}}\Leftrightarrow \]
\[\dot{\left\{S_h\right\}}\dot{\left\{{{\mathcal R}(\lambda ,T}_h)\right\}}=\dot{\left\{{{\mathcal R}(\lambda ,T}_h)\right\}}\dot{\left\{S_h\right\}}\ \Leftrightarrow {\mathop{lim}_{h\to 0} \left\|R(\lambda ,T_h)S_h-S_hR(\lambda ,T_h)\right\|\ }=0.\] 
\end{proof}

\begin{remark} i) Let $\left\{S_h\right\},\ \left\{T_h\right\}\ \in C_b\left(\left.(0,1\right],\ B\left(X\right)\right)$ such that $S_h$ is asymptotically equivalent with $T_h$, $\forall h\in \left.(0,1\right].$ Then 
\[Sp(T_h)=Sp(S_h),\forall h\in \left.(0,1\right].\]

\noindent ii) Let $\left\{S_h\right\},\ \left\{T_h\right\}\ \in C_b\left(\left.(0,1\right],\ B\left(X\right)\right)$ be asymptotically equivalent. Then

\noindent 
\[Sp({\{T}_h\})=Sp({\{S}_h\}).\] 
\end{remark}

\begin{theorem}
\label{d3.14} Let $\left\{S_h\right\},\ \left\{T_h\right\}\in \ C_b\left(\left.(0,1\right],\ B\left(X\right)\right)$ be two asymptotically quasinilpotent equivalent families. Then 
\[Sp({\{T}_h\})=Sp({\{S}_h\}).\] 
\end{theorem}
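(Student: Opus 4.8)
The plan is to transport the statement into the quotient Banach algebra $B_{\infty}=C_b((0,1],B(X))/C_0((0,1],B(X))$, reduce it there to the classical Theorem \ref{t1.4}, and then pull the conclusion back with Theorem \ref{d3.12}. Throughout, let $\pi$ denote the canonical quotient map $\{X_h\}\mapsto\dot{\{X_h\}}$, which is a unital algebra homomorphism.

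The first ingredient I would record is the norm formula
\[
\left\|\dot{\{X_h\}}\right\|_{B_{\infty}}=\limsup_{h\to 0}\left\|X_h\right\|,\qquad \{X_h\}\in C_b((0,1],B(X)).
\]
The inequality $\limsup_{h\to 0}\|X_h\|\le\|\dot{\{X_h\}}\|$ is already obtained in the discussion preceding Definition \ref{d3.1}. For the reverse inequality, fix $\varepsilon>0$, put $L=\limsup_{h\to 0}\|X_h\|$, and consider the representative $Y_h=\min\{1,(L+\varepsilon)/\|X_h\|\}\,X_h$ (with $Y_h=0$ when $X_h=0$); then $\sup_h\|Y_h\|\le L+\varepsilon$ while $Y_h-X_h$ vanishes for all $h$ close to $0$, so $\{Y_h-X_h\}\in C_0((0,1],B(X))$ and hence $\|\dot{\{X_h\}}\|\le L+\varepsilon$. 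Letting $\varepsilon\to 0$ gives equality.

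The second ingredient is that, since $\pi$ is an algebra homomorphism and the bracket ${(T-S)}^{[n]}=\sum_{k=0}^{n}{(-1)}^{n-k}C_n^k T^k S^{n-k}$ is assembled only from sums, scalar multiples and products, the bracket commutes with $\pi$, so that, evaluating everything at the level of families,
\[
\left(\dot{\{T_h\}}-\dot{\{S_h\}}\right)^{[n]}=\dot{\left\{{(T_h-S_h)}^{[n]}\right\}}.
\]
Combining with the norm formula gives $\bigl\|(\dot{\{T_h\}}-\dot{\{S_h\}})^{[n]}\bigr\|=\limsup_{h\to 0}\|{(T_h-S_h)}^{[n]}\|$, and the same with $S$ and $T$ interchanged. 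Taking $n$-th roots and letting $n\to\infty$, the hypothesis of asymptotic quasinilpotent equivalence becomes exactly
\[
\lim_{n\to\infty}\bigl\|(\dot{\{T_h\}}-\dot{\{S_h\}})^{[n]}\bigr\|^{\frac{1}{n}}=\lim_{n\to\infty}\bigl\|(\dot{\{S_h\}}-\dot{\{T_h\}})^{[n]}\bigr\|^{\frac{1}{n}}=0,
\]
i.e. $\dot{\{T_h\}}$ and $\dot{\{S_h\}}$ are quasinilpotent equivalent elements of $B_{\infty}$.

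Finally I would apply Theorem \ref{t1.4} in $B_{\infty}$ to get $Sp(\dot{\{T_h\}})=Sp(\dot{\{S_h\}})$, and then Theorem \ref{d3.12}, which identifies $Sp(\dot{\{S_h\}})=Sp(\{S_h\})$ and $Sp(\dot{\{T_h\}})=Sp(\{T_h\})$, to conclude $Sp(\{T_h\})=Sp(\{S_h\})$. The point needing care is that Theorem \ref{t1.4} is stated for operators on a Banach space while $B_{\infty}$ is an abstract Banach algebra; this is harmless, since the proof of that theorem relies only on the holomorphic functional calculus available in any Banach algebra, or alternatively one applies it verbatim after embedding $B_{\infty}$ isometrically and unitally into $L(B_{\infty})$ through the left regular representation, under which invertibility, the bracket, and the norm are all preserved. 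I expect the main, if modest, obstacle to be the upper bound in the norm formula, since the text supplies only the lower bound.
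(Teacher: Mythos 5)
Your proposal is correct, and it reaches the theorem by a genuinely different route than the paper does. The shared scaffolding is the same: both arguments move into the quotient algebra $B_{\infty}$ and both descend back to families through Theorem \ref{d3.12}. The difference is the engine inside $B_{\infty}$. The paper essentially re-proves Theorem \ref{t1.4} there from scratch: for $\lambda\in r(\dot{\{T_h\}})$ it forms the derivatives $\dot{\{R_n(\lambda)\}}$ of the resolvent, bounds them by Cauchy estimates on nested disks, and verifies by a telescoping computation that the series $\sum_n\tfrac{(-1)^n}{n!}\dot{\{(S_h-T_h)^{[n]}\}}\dot{\{R_n(\lambda)\}}$ inverts $\lambda\dot{\{I\}}-\dot{\{S_h\}}$, giving $r(\dot{\{T_h\}})\subseteq r(\dot{\{S_h\}})$ and symmetrically. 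You instead cite Theorem \ref{t1.4} as a black box, and your real new ingredient is the exact quotient-norm formula $\|\dot{\{X_h\}}\|=\limsup_{h\to 0}\|X_h\|$: the paper records only $\limsup_{h\to 0}\|X_h\|\le\|\dot{\{X_h\}}\|\le\sup_{h}\|X_h\|$, and your truncation $Y_h=\min\{1,(L+\varepsilon)/\|X_h\|\}\,X_h$ correctly supplies the missing upper bound, since $Y_h=X_h$ for all small $h$. Combined with the homomorphism property of the quotient map, this shows the hypothesis is verbatim quasinilpotent equivalence of $\dot{\{S_h\}}$ and $\dot{\{T_h\}}$ in $B_{\infty}$, and your left-regular-representation remark correctly legitimizes invoking Theorem \ref{t1.4} in an abstract unital Banach algebra (note $\|\dot{\{I\}}\|=1$, so the embedding is indeed isometric). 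What your version buys is modularity and, more importantly, transparency about where the hypothesis acts: it enters exactly through the norm formula. That is a genuine improvement over the printed proof, in which the quasinilpotent-equivalence hypothesis is never visibly invoked: convergence of the paper's series genuinely requires a bound of the form $\lim_n\|\dot{\{(S_h-T_h)^{[n]}\}}\|^{1/n}=0$, which is precisely what your norm formula extracts from the hypothesis (and, as printed, the extra $\tfrac{1}{n!}$ factors in the paper even spoil the telescoping cancellation, since $\tfrac{1}{(n-1)!}\neq\tfrac{1}{n!}$). What the paper's route buys in exchange is an explicit formula for an asymptotic resolvent of $\{S_h\}$ built from that of $\{T_h\}$, self-contained and not dependent on extending Theorem \ref{t1.4} beyond operators on a Banach space.
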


\begin{proof} Let  $\lambda \in r(\dot{{\{T}_h\}})$. Then there is  $\dot{\left\{{\mathcal R}(\lambda ,T_h)\right\}}\in B_{\infty }$ such that

\noindent 
\[\left(\lambda \dot{\left\{I\right\}}-\dot{{\{T}_h\}}\right){\rm \ }\dot{\left\{{\mathcal R}(\lambda ,T_h)\right\}}={\rm \ }\dot{\left\{{\mathcal R}(\lambda ,T_h)\right\}}\left(\lambda \dot{\left\{I\right\}}-\dot{{\{T}_h\}}\right)=\dot{\left\{I\right\}}.\]

\noindent Since $B_{\infty }$ is a Banach algebra, the map  $\lambda \mapsto \dot{\left\{{\mathcal R}(\lambda ,T_h)\right\}:}r(\dot{{\{T}_h\}})\to B_{\infty }$ is analytic.

\noindent 

\noindent Let $D_1=\left\{\left.\lambda \in {\mathbb C}\right|\left|\lambda -{\lambda }_0\right|\le r_1\right\}\subset r(\dot{{\{T}_h\}})$ and $D_0=\left\{\left.\lambda \in {\mathbb C}\right|\left|\lambda -{\lambda }_0\right|\le r_0\right\}$ with $r_1>r_0$.

\noindent 

\noindent Set
\[\dot{\left\{R_n\left(\lambda \right)\right\}}=\frac{1}{n!}\frac{d^n}{d{\lambda }^n}\dot{\left\{{\mathcal R}\left(\lambda ,T_h\right)\right\}}, \forall n\in {\mathbb N},\] 
and
\[\dot{\left\{R\left(\lambda \right)\right\}}=\sum_{n\in N}{\frac{{\left(-1\right)}^n}{n!}\dot{\left\{{\left(S_h-T_h\right)}^{\left[n\right]}\right\}}\dot{\left\{R_n(\lambda )\right\}}}.\]

\noindent If we set  $M_1={sup}_{\mu \in D_1}\left\|\dot{\left\{{\mathcal R}\left(\mu ,T_h\right)\right\}},\right\|$ , it follows that $\left\|\dot{\left\{R_n\left(\lambda \right)\right\}}\right\|\le \frac{r_1M_1}{{(r_1-r_0)}^{n+1}}$.

\noindent Deriving the relation $\left(\lambda \dot{\left\{I\right\}}-\dot{{\{T}_h\}}\right){\rm \ }\dot{\left\{{\mathcal R}(\lambda ,T_h)\right\}}=\dot{\left\{I\right\}}$ by $n$ times, we obtain

\noindent 
\[\left(\lambda \dot{\left\{I\right\}}-\dot{{\{T}_h\}}\right)\frac{d^n}{d{\lambda }^n}\dot{\left\{{\mathcal R}\left(\lambda ,T_h\right)\right\}}=-n\frac{d^{n-1}}{d{\lambda }^{n-1}}\dot{\left\{{\mathcal R}\left(\lambda ,T_h\right)\right\}}.\]

\noindent Moreover, since

\noindent 
\[\dot{\left\{{\left(T_h-S_h\right)}^{\left[n+1\right]}\right\}}=\dot{\left\{T_h{\left(T_h-S_h\right)}^{\left[n\right]}-{\left(T_h-S_h\right)}^{\left[n\right]}S_h\right\}}=\]
\[=\dot{\left\{T_h\right\}}\dot{\left\{{\left(T_h-S_h\right)}^{\left[n\right]}\right\}}-\dot{\left\{{\left(T_h-S_h\right)}^{\left[n\right]}\right\}}\dot{\left\{S_h\right\}}, \forall n\in {\mathbb N}, \] 
we have
\[\left(\lambda \dot{\left\{I\right\}}-\dot{\left\{S_h\right\}}\right)\dot{\left\{R\left(\lambda \right)\right\}}=\dot{\left\{\lambda I-S_h\right\}}\sum_{n\in N}{\frac{{\left(-1\right)}^n}{n!}\dot{\left\{{\left(S_h-T_h\right)}^{\left[n\right]}\right\}}\dot{\left\{R_n\left(\lambda \right)\right\}}}=\]
\[=\sum_{n\in N}{\frac{{\left(-1\right)}^n}{n!}\dot{\left\{\lambda I-S_h\right\}}\dot{\left\{{\left(S_h-T_h\right)}^{\left[n\right]}\right\}}\dot{\left\{R_n\left(\lambda \right)\right\}}}=\]
\[=\sum_{n\in N}{\frac{{\left(-1\right)}^n}{n!}\dot{\left\{(\lambda I-S_h{)\left(S_h-T_h\right)}^{\left[n\right]}\right\}}\dot{\left\{R_n\left(\lambda \right)\right\}}}=\]
\[=\sum_{n\in N}{\frac{1}{n!}\dot{\left\{\left({\left(\left(\lambda I-S_h\right)-(\lambda I-T_h\right))}^{\left[n+1\right]}+{\left(\left(\lambda I-S_h\right)-(\lambda I-T_h\right))}^{\left[n\right]}\left(\lambda I-T_h\right)\right)\right\}}\dot{\left\{R_n\left(\lambda \right)\right\}}}=\]
\[=\sum_{n=0}{\frac{{\left(-1\right)}^{n+1}}{n!}\dot{\left\{{\left(S_h-T_h\right)}^{\left[n+1\right]}\right\}}\dot{\left\{R_n(\lambda )\right\}}}+\left(\lambda \dot{\left\{I\right\}}-\dot{\left\{T_h\right\}}\right)\dot{\left\{{\mathcal R}\left(\lambda ,T_h\right)\right\}}-\]
\[-\sum_{n=1}{\frac{{\left(-1\right)}^n}{n-1!}\dot{\left\{{\left(S_h-T_h\right)}^{\left[n\right]}\right\}}\left(\lambda \dot{\left\{I\right\}}-\dot{{\{T}_h\}}\right)\frac{d^n}{d{\lambda }^n}\dot{\left\{{\mathcal R}\left(\lambda ,T_h\right)\right\}}}=\]
\[=\sum_{n=0}{\frac{{\left(-1\right)}^{n+1}}{n!}\dot{\left\{{\left(S_h-T_h\right)}^{\left[n+1\right]}\right\}}\dot{\left\{R_n(\lambda )\right\}}}+\left(\lambda \dot{\left\{I\right\}}-\dot{\left\{T_h\right\}}\right)\dot{\left\{{\mathcal R}\left(\lambda ,T_h\right)\right\}}-\]
\[-\ \sum_{n=1}{\frac{{\left(-1\right)}^n}{n-1!}\dot{\left\{{\left(S_h-T_h\right)}^{\left[n\right]}\right\}}\frac{d^{n-1}}{d{\lambda }^{n-1}}\dot{\left\{{\mathcal R}\left(\lambda ,T_h\right)\right\}}}=\]
\[=\sum_{n=0}{\frac{{\left(-1\right)}^{n+1}}{n!}\dot{\left\{{\left(S_h-T_h\right)}^{\left[n+1\right]}\right\}}\dot{\left\{R_n(\lambda )\right\}}}+\left(\lambda \dot{\left\{I\right\}}-\dot{\left\{T_h\right\}}\right)\dot{\left\{{\mathcal R}\left(\lambda ,T_h\right)\right\}}-\]
\[-\sum_{n=1}{\frac{{\left(-1\right)}^n}{n-1!}\dot{\left\{{\left(S_h-T_h\right)}^{\left[n\right]}\right\}}\dot{\left\{R_{n-1}(\lambda )\right\}}}.\]

\noindent Therefore $\lambda \in r({\{S}_h\})$.

\noindent Analogously we can prove the other inclusion. By  Theorem \ref{d3.12}, it results that

\noindent 
\[Sp({\{T}_h\})=Sp(\dot{{\{T}_h\}})=Sp(\dot{{\{S}_h\}})=Sp({\{S}_h\}).\] 
\end{proof}

\begin{theorem}
\label{d3.15} Let $\left\{T_h\right\}\ \in C_b\left(\left.(0,1\right],\ B\left(X\right)\right)$ and $\Omega $ be an open set which contains $\bigcup_{h\in \left.(0,1\right]}{Sp(T_h)}$. Then for any analytic function $f:\Omega \to {\mathbb C}$ we have

\noindent 
\[Sp\left(\left\{f\left(T_h\right)\right\}\right)=f(Sp\left(\left\{T_h\right\}\right)).\] 
\end{theorem}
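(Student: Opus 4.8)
The plan is to transfer the statement to the quotient Banach algebra $B_{\infty}=C_b\left(\left.(0,1\right],\ B\left(X\right)\right)/C_0\left(\left.(0,1\right],\ B\left(X\right)\right)$ and invoke the classical spectral mapping theorem there. By Theorem \ref{d3.12} applied both to $\{T_h\}$ and to $\{f(T_h)\}$ one has $Sp(\{T_h\})=Sp(\dot{\{T_h\}})$ and $Sp(\{f(T_h)\})=Sp(\dot{\{f(T_h)\}})$, so it suffices to prove the identity inside $B_{\infty}$. First I would record that $\Omega$ is genuinely a neighborhood of the relevant spectrum: by Remark \ref{r3.2} i) one has $\bigcap_{h}r(T_h)\subseteq r(\{T_h\})$, hence $Sp(\{T_h\})\subseteq\bigcup_h Sp(T_h)\subseteq\Omega$, and by Theorem \ref{d3.12} this same compact set equals $Sp(\dot{\{T_h\}})$. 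Thus $f$ is analytic on an open neighborhood of $Sp(\dot{\{T_h\}})$, and the holomorphic functional calculus of Theorem \ref{t1.2}, carried out in the Banach algebra $B_{\infty}$, yields a well-defined element $f(\dot{\{T_h\}})\in B_{\infty}$.

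The heart of the argument is the identification $f(\dot{\{T_h\}})=\dot{\{f(T_h)\}}$. To obtain it I would fix a contour $\gamma\subset\Omega$ enveloping the compact set $\overline{\bigcup_h Sp(T_h)}$, so that $\gamma$ simultaneously envelops every $Sp(T_h)$ and $Sp(\dot{\{T_h\}})$. For $\lambda\in\gamma$ one has $\lambda\in\bigcap_h r(T_h)$, the family $\{R(\lambda,T_h)\}$ lies in $C_b\left(\left.(0,1\right],\ B\left(X\right)\right)$, and by Remark \ref{d3.11} its class is exactly the resolvent of $\dot{\{T_h\}}$ in $B_{\infty}$, i.e. $\mathcal{R}(\lambda,\dot{\{T_h\}})=\dot{\{R(\lambda,T_h)\}}$. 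Since the quotient map $\pi\colon C_b\left(\left.(0,1\right],\ B\left(X\right)\right)\to B_{\infty}$ is a contractive unital algebra homomorphism, it commutes with the (Bochner) contour integral, so that
\[
f(\dot{\{T_h\}})=\frac{1}{2\pi i}\int_{\gamma} f(\lambda)\,\mathcal{R}(\lambda,\dot{\{T_h\}})\,d\lambda=\pi\!\left(\frac{1}{2\pi i}\int_{\gamma} f(\lambda)\,\{R(\lambda,T_h)\}\,d\lambda\right)=\pi(\{f(T_h)\})=\dot{\{f(T_h)\}},
\]
where the inner integral is computed coordinatewise and reproduces $f(T_h)$ for each $h$ by Theorem \ref{t1.2}.

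With the identification in hand I would conclude by applying the classical spectral mapping theorem (Theorem \ref{t1.3}), which holds verbatim in any complex Banach algebra and in particular in $B_{\infty}$, to the element $\dot{\{T_h\}}$, giving $Sp\!\left(f(\dot{\{T_h\}})\right)=f\!\left(Sp(\dot{\{T_h\}})\right)=f\!\left(Sp(\{T_h\})\right)$. Combining this with the two instances of Theorem \ref{d3.12} and the identification above yields
\[
Sp(\{f(T_h)\})=Sp(\dot{\{f(T_h)\}})=Sp\!\left(f(\dot{\{T_h\}})\right)=f(Sp(\{T_h\})),
\]
which is the asserted equality.

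The step I expect to be the main obstacle is the identification $f(\dot{\{T_h\}})=\dot{\{f(T_h)\}}$, and more precisely the uniform control of the resolvents that makes it rigorous: one must know that a contour $\gamma$ can be placed inside $\Omega$ around $\overline{\bigcup_h Sp(T_h)}$ and that on such a contour the family $\{R(\lambda,T_h)\}$ is bounded and norm-continuous in $\lambda$ uniformly in $h$, for only then does $\lambda\mapsto\{R(\lambda,T_h)\}$ define a continuous $C_b\left(\left.(0,1\right],\ B\left(X\right)\right)$-valued map whose integral lies in $C_b$, represents $\mathcal{R}(\lambda,\dot{\{T_h\}})$, and equals $\{f(T_h)\}$; this uniform bound is also what guarantees $\{f(T_h)\}\in C_b\left(\left.(0,1\right],\ B\left(X\right)\right)$ in the first place. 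I would try to extract this from the hypothesis that $\{T_h\}$ is bounded together with Remark \ref{r3.2} (which puts $\gamma$ in the resolvent set of the family) and the asymptotic resolvent equation of Proposition \ref{d3.4}, used to propagate both the bound and the continuity along $\gamma$.
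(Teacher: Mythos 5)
Your reduction to $B_{\infty}$ is a genuinely different route from the paper's: the paper never passes to the quotient in this proof, but instead adapts the classical argument directly at the level of families, using the auxiliary analytic functions $g(\xi)=\frac{f(\xi)-f(\lambda)}{\xi-\lambda}$ and $\frac{1}{f(\xi)-\lambda}$, the functional calculus of each $T_h$ separately, the asymptotic commutation lemma (Proposition \ref{d3.13}) and Remark \ref{r3.2} i). However, your central step, the identification $f(\dot{\{T_h\}})=\dot{\{f(T_h)\}}$, has a genuine gap at exactly the point you flag, and it cannot be repaired from the stated hypotheses: uniform-in-$h$ boundedness of $\{R(\lambda,T_h)\}$ along a contour is false in general, because the norm of a resolvent is not controlled by the distance to the spectrum for non-normal operators. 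Concretely, let $T_h\in L(\ell^2)$ be the truncated shift, $T_he_i=e_{i+1}$ for $i<k(h)$ and $T_he_i=0$ for $i\geq k(h)$, with $k(h)=\lceil 1/h\rceil$. This is a bounded family with $Sp(T_h)=\{0\}$ for every $h$, yet $\|R(\lambda,T_h)e_1\|\geq |\lambda|^{-k(h)}\to\infty$ for every $0<|\lambda|<1$; so with $\Omega=D(0,1/2)$ no contour in $\Omega$ around $\bigcup_h Sp(T_h)=\{0\}$ carries a bounded family of resolvents, and your $C_b$-valued integrand simply does not exist. (Norm-continuity in $h$ can be arranged by a continuous scalar damping, and in any case the paper's definition of $C_b$ only uses boundedness. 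Note also that your contour needs $\overline{\bigcup_h Sp(T_h)}\subset\Omega$, which the hypothesis $\bigcup_h Sp(T_h)\subseteq\Omega$ does not provide. Proposition \ref{d3.4} cannot supply the missing bound either, since it presupposes bounded asymptotic resolvents at the points in question.)

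The same example shows that the first step of your reduction is also unsound: the inclusion $Sp(\{T_h\})\subseteq\bigcup_h Sp(T_h)$, which you deduce from Remark \ref{r3.2} i), requires that the pointwise inverses $\{(\lambda I-T_h)^{-1}\}$ form a bounded family — and that is exactly what fails; Remark \ref{r3.2} i) is asserted in the paper without proof and is false in this generality. Indeed, for the truncated shifts one has $\|\dot{\{T_h\}}^n\|\geq \limsup_{h\to 0}\|T_h^n\|=1$ for every $n$, so the spectral radius of $\dot{\{T_h\}}$ in $B_{\infty}$ equals $1$; by Theorem \ref{d3.12}, $Sp(\{T_h\})$ then contains a point of modulus $1$, which lies outside $\bigcup_h Sp(T_h)=\{0\}$ and outside $\Omega$, so $f$ need not even be defined on $Sp(\dot{\{T_h\}})$ and the right-hand side $f(Sp(\{T_h\}))$ can be meaningless. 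In fairness, the paper's own proof makes tacit assumptions of the same nature — it treats $\{g(T_h)\}$ and the family obtained from $\frac{1}{f(\xi)-\lambda}$ as elements of $C_b$ without any uniform bound, and it too invokes Remark \ref{r3.2} i) — but its per-$h$ route at least never demands a single contour with uniformly bounded resolvents, whereas your argument concentrates the entire difficulty into that one requirement, which the hypotheses of the theorem cannot deliver.
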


\begin{proof}
 If $\left\{T_h\right\}\in C_b\left(\left.(0,1\right],\ B\left(X\right)\right)$, then there is a $M<\infty $ such that $\left\|T_h\right\|\le M$, $\forall h\in \left.(0,1\right]$. Therefore $Sp(T_h)\subset D(0,M)$ $\forall h\in \left.(0,1\right]$, so that $\bigcup_{h\in \left.(0,1\right]}{Sp(T_h)}$ is a bounded set.

\noindent ''$\supseteq $'' Let $f:\Omega \to {\mathbb{C}}$ be an analytic
function and $\lambda \in Sp(\left\{T_h\right\})$. For $\xi \in \Omega $, we
define the function 
\begin{equation*}
g\left(\xi \right)=\left\{ 
\begin{array}{c}
\frac{f\left(\xi \right)-f(\lambda )}{\xi -\lambda },\ \xi \ne \lambda \\ 
f^{\prime }\left(\lambda \right),\ \xi =\lambda%
\end{array}
\right..
\end{equation*}

\noindent Hence $g:\Omega \to {\mathbb{C}}$\textit{\ }is analytic and

\noindent 
\begin{equation*}
f\left(T_h\right)-\ f\left(\lambda \right)I=\
g\left(T_h\right)\left(T_h-\lambda I\right)=\left(T_h-\lambda
I\right)g\left(T_h\right),
\end{equation*}

\noindent for any $h\in \left.(0,1\right]$.

\noindent We suppose that $f\left(\lambda \right)\in
r\left(\left\{f\left(T_h\right)\right\}\right)$. Then there is $\left\{{%
\mathcal{R}}(f\left(\lambda \right),\ f\left(T_h\right))\right\}\ \subset
B(X)$ such that

\noindent 
\begin{equation*}
{\mathop{\lim }_{h\to 0} \left\|\left(f\left(\lambda \right)I-\
f\left(T_h\right)\right){\mathcal{R}}\left(f\left(\lambda \right),\
f\left(T_h\right)\right)-I\right\|\ }=
\end{equation*}
\begin{equation*}
={\mathop{\lim }_{h\to 0} \left\|{\mathcal{R}}\left(f\left(\lambda \right),\
f\left(T_h\right)\right)\left(f\left(\lambda \right)I-\
f\left(T_h\right)\right)-I\right\|\ }=0.
\end{equation*}

\noindent Having in view the last relation, we have

\noindent 
\begin{equation*}
{\mathop{\lim }_{h\to 0} \left\|\left(T_h-\lambda I\right)g\left(T_h\right){%
\mathcal{R}}\left(f\left(\lambda \right),\
f\left(T_h\right)\right)-I\right\|\ }=
\end{equation*}
\begin{equation*}
={\mathop{\lim }_{h\to 0} \left\|{\mathcal{R}}\left(f\left(\lambda \right),\
f\left(T_h\right)\right)g\left(T_h\right)\left(T_h-\lambda
I\right)-I\right\|\ }=0. (*)
\end{equation*}
Since 
\begin{equation*}
\ g\left(T_h\right)T_h=T_hg\left(T_h\right),
\end{equation*}

\noindent for any $h\in \left.(0,1\right]$, according to the properties of
holomorphic functional calculi it follows

\noindent 
\begin{equation*}
g\left(T_h\right)f(T_h)=f(T_h)g\left(T_h\right),
\end{equation*}

\noindent for every $h\in \left.(0,1\right]$. Applying Proposition \ref{d3.11}, we
obtain

\noindent 
\begin{equation*}
{\mathop{\lim }_{h\to 0} \left\|g\left(T_h\right){\mathcal{R}}%
\left(f\left(\lambda \right),\ f\left(T_h\right)\right)-{\mathcal{R}}%
\left(f\left(\lambda \right),\
f\left(T_h\right)\right)g\left(T_h\right)\right\|\ }=0.
\end{equation*}
Hence

\noindent 
\begin{equation*}
{\mathop{\lim }_{h\to 0} \left\|g\left(T_h\right){\mathcal{R}}%
\left(f\left(\lambda \right),\ f\left(T_h\right)\right)\left(T_h-\lambda
I\right)-I\right\|\ }=
\end{equation*}
\begin{equation*}
={\mathop{\lim }_{h\to 0} \|g\left(T_h\right){\mathcal{R}}%
\left(f\left(\lambda \right),\ f\left(T_h\right)\right)\left(T_h-\lambda
I\right)-{\mathcal{R}}\left(f\left(\lambda \right),\
f\left(T_h\right)\right)g\left(T_h\right)\left(T_h-\lambda I\right)}+
\end{equation*}
\begin{equation*}
+{{\mathcal{R}}\left(f\left(\lambda \right),\
f\left(T_h\right)\right)g\left(T_h\right)\left(T_h-\lambda I\right)-I \|}\le
\end{equation*}
\begin{equation*}
\le {\mathop{\lim }_{h\to 0} \left\|g\left(T_h\right){\mathcal{R}}%
\left(f\left(\lambda \right),\ f\left(T_h\right)\right)-{\mathcal{R}}%
\left(f\left(\lambda \right),\
f\left(T_h\right)\right)g\left(T_h\right)\right\|\left\|T_h-\lambda
I\right\|\ }+
\end{equation*}
\begin{equation*}
+{\mathop{\lim }_{h\to 0} \left\|{\mathcal{R}}\left(f\left(\lambda \right),\
f\left(T_h\right)\right)g\left(T_h\right)\left(T_h-\lambda
I\right)-I\right\|\ }=0. (**)
\end{equation*}

\noindent From (*) and (**), it results

\noindent 
\begin{equation*}
{\mathop{\lim }_{h\to 0} \left\|\left(T_h-\lambda I\right)g\left(T_h\right){%
\mathcal{R}}\left(f\left(\lambda \right),\
f\left(T_h\right)\right)-I\right\|\ }=
\end{equation*}
\begin{equation*}
={\mathop{\lim }_{h\to 0} \left\|g\left(T_h\right){\mathcal{R}}%
\left(f\left(\lambda \right),\ f\left(T_h\right)\right)\left(T_h-\lambda
I\right)-I\right\|\ }=0,
\end{equation*}

\noindent so $\lambda \in r\left(\left\{T_h\right\}\right)$, contradiction
with $\lambda \in Sp\left(\left\{T_h\right\}\right)$. Therfore $%
f\left(\lambda \right)\in \ Sp\left(\left\{f(T_h)\right\}\right)$.

\noindent ''$\subseteq $'' Let $\lambda \in \
Sp\left(\left\{f(T_h)\right\}\right)$. If $\lambda \notin
f(Sp\left(\left\{T_h\right\}\right))$, then $\lambda \ne f(\xi )$ for any $%
\xi \in Sp\left(\left\{T_h\right\}\right)$.

\noindent Let $\Omega ^{\prime }$ an open neighborhood $\bigcup_{h\in \left.(0,1\right]}{Sp(T_h)}$ and

\noindent 
\begin{equation*}
h\left(\xi \right)=\frac{1}{f\left(\xi \right)-\lambda } ,
\end{equation*}

\noindent for every $\xi \in \Omega ^{\prime }$. Then $h$ is an analytic
function and applying the holomorphic functional calculi, we obtain 
\begin{equation*}
h\left(T_h\right)\left(f\left(T_h\right)-\lambda
I\right)=\left(f\left(T_h\right)-\lambda I\right)h\left(T_h\right)=I,
\end{equation*}

\noindent for any $h\in \left.(0,1\right]$. Therefore $\lambda \in r(f(T_h))$%
, for any $h\in \left.(0,1\right]$. Since $\bigcap_{h\in (\left.0,1\right]}{%
r\left(f(T_h)\right)}\subseteq r\left(\left\{f(T_h)\right\}\right)$ (Remark
3.2 i)), it follows $\lambda \in r\left(\left\{f(T_h)\right\}\right)$,
contradiction with $\lambda \in Sp\left(\left\{{f(T}_h)\right\}\right)$.
Hence $\lambda \in f(Sp\left(\left\{T_h\right\}\right))$.

\noindent
\end{proof}

\begin{definition}
\label{d3.16} A family $\left\{U_h\right\}\subset L(X)$ is
calling asymptotic quasinilpotent operator if

\noindent 
\begin{equation*}
{\mathop{\lim}_{n\to \infty } {{\mathop{\lim \sup}_{h\to 0} \left\|{U_h}^n\right\|\ 
}}^{\frac{1}{n}}\ }=0.
\end{equation*}
\end{definition}

\begin{theorem}
\label{d3.17} A family $\left\{U_h\right\}\in C_b\left(\left.(0,1\right],\ B\left(X\right)\right)$ is an asymptotic quasinilpotent operator if and only if $Sp\left(\left\{U_h\right\}\right)=\left\{0\right\}$.
\end{theorem}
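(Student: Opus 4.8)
The plan is to reduce the whole statement to a spectral-radius computation in the quotient Banach algebra $B_\infty$. By Theorem \ref{d3.12} we already have $Sp(\{U_h\}) = Sp(\dot{\{U_h\}})$, so it suffices to show that the element $\dot{\{U_h\}} \in B_\infty$ has spectrum $\{0\}$ exactly when $\{U_h\}$ is asymptotic quasinilpotent in the sense of Definition \ref{d3.16}. Since $B_\infty$ is a unital Banach algebra, each of its elements has non-empty, compact spectrum (the Banach-algebra version of Theorem \ref{t1.1}), and the spectral radius formula (the analogue of Theorem \ref{t1.2}, valid in any unital Banach algebra) gives
\[
\sup_{\lambda \in Sp(\dot{\{U_h\}})} |\lambda| = \lim_{n\to\infty}\left\|\dot{\{U_h\}}^{\,n}\right\|^{1/n}.
\]
Because multiplication in $B_\infty$ is induced pointwise from $C_b((0,1],B(X))$, we have $\dot{\{U_h\}}^{\,n} = \dot{\{U_h^n\}}$, so the right-hand side is controlled by the quotient norm of $\{U_h^n\}$.

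The key computational step is to identify that quotient norm. I would show that for every $\{T_h\} \in C_b((0,1],B(X))$,
\[
\left\|\dot{\{T_h\}}\right\| = \limsup_{h\to 0}\|T_h\|.
\]
The inequality $\limsup_{h\to 0}\|T_h\| \le \|\dot{\{T_h\}}\|$ is already recorded in the discussion preceding Definition \ref{d3.1}. For the reverse inequality, given $\varepsilon>0$ I would choose $\delta \in (0,1]$ with $\|T_h\| \le \limsup_{h\to 0}\|T_h\| + \varepsilon$ for all $h \le \delta$, pick a continuous cutoff $\psi:(0,1]\to[0,1]$ with $\psi\equiv 0$ on $(0,\delta/2]$ and $\psi\equiv 1$ on $[\delta,1]$, and set $U_h = -\psi(h)T_h$. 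Then $\{U_h\}\in C_0((0,1],B(X))$ and $\sup_h\|T_h+U_h\| = \sup_h (1-\psi(h))\|T_h\| \le \limsup_{h\to 0}\|T_h\|+\varepsilon$, so $\|\dot{\{T_h\}}\| \le \limsup_{h\to 0}\|T_h\|$. Applying this with $T_h = U_h^n$ converts the spectral radius formula into
\[
\sup_{\lambda\in Sp(\dot{\{U_h\}})}|\lambda| = \lim_{n\to\infty}\Big(\limsup_{h\to 0}\|U_h^n\|\Big)^{1/n}.
\]

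Finally I would close both directions at once. The right-hand side above vanishes precisely when $\{U_h\}$ is asymptotic quasinilpotent. Since the spectrum of $\dot{\{U_h\}}$ is non-empty, a vanishing spectral radius forces $Sp(\dot{\{U_h\}}) = \{0\}$; conversely $Sp(\dot{\{U_h\}}) = \{0\}$ immediately makes the spectral radius zero. Transporting back through $Sp(\{U_h\}) = Sp(\dot{\{U_h\}})$ then yields the desired equivalence. I expect the main obstacle to be the norm identification together with the justification that the abstract spectral radius formula may be invoked in $B_\infty$ (not merely for single operators on $X$ as phrased in Theorem \ref{t1.2}); once the quotient norm is pinned down to $\limsup_{h\to 0}\|U_h^n\|$, the remainder is the standard ``spectrum is $\{0\}$ iff the spectral radius is $0$'' argument for a unital Banach algebra.
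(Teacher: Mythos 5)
Your proposal is correct, but it takes a genuinely different route from the paper, and the difference is worth spelling out. The paper splits the two implications and treats them asymmetrically: for ``asymptotic quasinilpotent $\Rightarrow Sp(\{U_h\})=\{0\}$'' it observes that $(U_h-0)^{[n]}=U_h^n$, so $\{U_h\}$ is asymptotically quasinilpotent equivalent to the zero family, and then invokes the heavy Theorem \ref{d3.14} (equality of spectra under asymptotic quasinilpotent equivalence, whose proof is an analytic series argument in $B_\infty$); for the converse it uses Theorem \ref{d3.12}, the Gelfand spectral radius formula in $B_\infty$, and only the one-sided inequality $\limsup_{h\to 0}\|U_h^n\|\le \|\dot{\{U_h^n\}}\|$ recorded before Definition \ref{d3.1}. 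You instead prove the exact quotient-norm identity $\|\dot{\{T_h\}}\|=\limsup_{h\to 0}\|T_h\|$ via the cutoff construction (which is correct: $\{-\psi(h)T_h\}$ lies in $C_0$, is continuous if $\{T_h\}$ is, and $(1-\psi(h))T_h$ has supremum norm at most $\limsup_{h\to 0}\|T_h\|+\varepsilon$), and this equality — which the paper never establishes, only the $\le$ half — lets the spectral radius formula
\begin{equation*}
\sup_{\lambda\in Sp(\dot{\{U_h\}})}|\lambda|=\lim_{n\to\infty}\Bigl(\limsup_{h\to 0}\|U_h^n\|\Bigr)^{1/n}
\end{equation*}
close both directions at once, using non-emptiness of the spectrum in the unital Banach algebra $B_\infty$ for the forward implication. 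What your approach buys: it bypasses Theorem \ref{d3.14} entirely, so the result rests only on Theorem \ref{d3.12} plus standard Gelfand theory, and the norm identification is a sharper lemma of independent interest (it also cleans up a sloppy step in the paper's own computation, where $\|\dot{\{U_h\}}^n\|$ is written as an infimum of $\sup_h\|V_h^n\|$ over representatives $\{V_h\}$, conflating powers of representatives with representatives of the power). What the paper's route buys: it reuses machinery already developed, and its forward direction needs no information about the quotient norm at all. Do note that your argument tacitly requires $B_\infty$ to be a unital complex Banach algebra so that the spectral radius formula and non-emptiness of spectra apply; this is true (the unit is $\dot{\{I\}}$, of norm one by your own identity) and is consistent with how the paper uses $B_\infty$ in Remark \ref{d3.11} and Theorem \ref{d3.12}, but it deserves the explicit mention you give it.
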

\noindent 

\begin{proof} Let $\left\{U_h\right\}\in C_b\left(\left.(0,1\right],\ B\left(X\right)\right)$ be an asymptotic quasinilpotent operator. Then $\left\{U_h\right\}$ is asymptotically spectral equivalent with ${\left\{0\right\}}_{h\in \left.(0,1\right]}\in C_b\left(\left.(0,1\right],\ B\left(X\right)\right)$. By Theorem \ref{d3.14} it follows that
\[Sp\left(\left\{U_h\right\}\right)=Sp\left(\left\{0\right\}\right)=\left\{0\right\}.\] 

\noindent Consequently, suppose that  $Sp\left(\left\{U_h\right\}\right)=\left\{0\right\}$. By Theorem \ref{d3.12}, we have

\noindent 
\[Sp\left(\dot{\left\{U_h\right\}}\right)=Sp\left(\left\{U_h\right\}\right)=\left\{0\right\}.\]

\noindent Then the spectral radius of $\dot{\left\{U_h\right\}}$, which we will call from now $r_{sp}\left(\dot{\left\{U_h\right\}}\right)$, is zero. Since

\noindent 
\[r_{sp}\left(\dot{\left\{U_h\right\}}\right)={\mathop{\lim }_{n\to \infty } {\left\|{\left(\dot{\left\{U_h\right\}}\right)}^n\right\|}^{\frac{1}{n}}\ },\] 
it follows that 
\[{\mathop{\lim }_{n\to \infty } {\left\|{\left(\dot{\left\{U_h\right\}}\right)}^n\right\|}^{\frac{1}{n}}\ }=0.\] 
But, on the other hand, we have

\noindent 
\[{\mathop{\lim }_{n\to \infty } {\left\|{\left(\dot{\left\{U_h\right\}}\right)}^n\right\|}^{\frac{1}{n}}\ }={\mathop{\lim }_{n\to \infty } {inf}_{\left\{U_h\right\}\in \dot{\left\{U_h\right\}}}{\left\|{\left\{U_h\right\}}^n\right\|}^{\frac{1}{n}}\ }={\mathop{\lim }_{n\to \infty } {inf}_{\left\{U_h\right\}\in \dot{\left\{U_h\right\}}}{\left\|\left\{{U_h}^n\right\}\right\|}^{\frac{1}{n}}\ }=\]
\[={\mathop{\lim }_{n\to \infty } {inf}_{\left\{U_h\right\}\in \dot{\left\{U_h\right\}}}{{sup}_{h\in \left.(0,1\right]}\left\|{U_h}^n\right\|}^{\frac{1}{n}}\ }\ge {\mathop{\lim }_{n\to \infty } {inf}_{\left\{U_h\right\}\in \dot{\left\{U_h\right\}}}{{\mathop{{\lim \sup}}_{h\to 0} \left\|{U_h}^n\right\|\ }}^{\frac{1}{n}}\ }=\]
\[={\mathop{\lim }_{n\to \infty } {{\mathop{{\lim \sup}}_{h\to 0} \left\|{U_h}^n\right\|\ }}^{\frac{1}{n}}\ }.\]

\noindent By the above relations, we obtain

\noindent 
\[{\mathop{\lim }_{n\to \infty } {{\mathop{{\lim \sup}}_{h\to 0} \left\|{U_h}^n\right\|\ }}^{\frac{1}{n}}\ }=0,\]

\noindent so that $\left\{U_h\right\}$ is an asymptotic quasinilpotent operator. 
\end{proof}

\end{document}